\documentclass[10pt,a4paper]{article}
\usepackage{a4wide}
\usepackage{amsfonts}
\usepackage{amscd}
\usepackage{amssymb}
\usepackage{amsthm}
\usepackage{amsmath}
\usepackage{latexsym}
\usepackage{dsfont}
\usepackage{hyperref}
\usepackage[usenames,dvipsnames]{color}
\usepackage{esvect} %Vector arrows
\usepackage{algorithm}
\usepackage{algpseudocode}
\usepackage{enumerate}
\usepackage[dvipsnames]{xcolor}
\usepackage{bm}
\usepackage{subfigure}
\usepackage{graphicx}

%show labels
%\usepackage{showkeys}

%small captions
\usepackage{caption}
\captionsetup[figure]{font=footnotesize}
\captionsetup[table]{font=footnotesize}

%multiple rows in table
\usepackage{multirow}

%Editing macros

% standard macros 
%File with standard macros by A. Stein

%Abbreviations for theorems, definitions etc.
\theoremstyle{plain}
\newtheorem{thm}{Theorem}[section]
\theoremstyle{plain}
\newtheorem{lem}[thm]{Lemma}
\newtheorem{prop}[thm]{Proposition}
\newtheorem{cor}[thm]{Corollary}
\theoremstyle{definition}
\newtheorem{defi}[thm]{Definition}
\newtheorem{rem}[thm]{Remark}
\newtheorem{assumption}[thm]{Assumption}

% greek
\newcommand{\ga}{\alpha}
\newcommand{\gb}{\beta}
\newcommand{\gd}{\delta}
\newcommand{\eps}{\ensuremath{\varepsilon}}

\renewcommand{\gg}{\gamma}
\newcommand{\gk}{\kappa}
\newcommand{\gl}{\lambda}
\newcommand{\go}{\omega}
\newcommand{\gs}{\sigma}
\newcommand{\gt}{\theta}

\newcommand{\gO}{\Omega}
\newcommand{\gS}{\Sigma}

% calligraphic
\newcommand{\cA}{\mathcal{A}}
\newcommand{\cB}{\mathcal{B}}
\newcommand{\cC}{\mathcal{C}}
\newcommand{\cD}{\mathcal{D}}
\newcommand{\cE}{\mathcal{E}}

\newcommand{\cG}{\mathcal{G}}

\newcommand{\cI}{\mathcal{I}}

\newcommand{\cK}{\mathcal{K}}
\newcommand{\cL}{\mathcal{L}}

\newcommand{\cN}{\mathcal{N}}
\newcommand{\cO}{\mathcal{O}}
\newcommand{\cP}{\mathcal{P}}
\newcommand{\cQ}{\mathcal{Q}}

\newcommand{\cV}{\mathcal{V}}

\newcommand{\cX}{\mathcal{X}}

% bold characters

\newcommand{\bE}{\mathbb{E}}

\newcommand{\bN}{\mathbb{N}}

\newcommand{\bP}{\mathbb{P}}
\newcommand{\bQ}{\mathbb{Q}}
\newcommand{\bR}{\mathbb{R}}

\newcommand{\bT}{\mathbb{T}}

\newcommand{\bZ}{\mathbb{Z}}
 %Indicator function

% roman

\newcommand{\rC}{{\rm C}}
\newcommand{\rD}{{\rm D}}

\newcommand{\rS}{{\rm S}}

% fraktur capital letters

\newcommand{\mfH}{\mathfrak{H}}
\newcommand{\mfI}{\mathfrak{I}}

\newcommand{\mfT}{\mathfrak{T}}

%Operators, equations and misc. macros

\newcommand*{\lrscript}[5]{{\vphantom{#1}}_{#2}^{#3}{#1}_{#4}^{#5}}
\newcommand{\dualpair}[4]{\ensuremath{\lrscript{\langle}{#1}{}{}{} #3 ,#4 \rangle_{#2}}}
\newcommand{\be}{\begin {equation}}
\newcommand{\ee}{\end  {equation}}
\newcommand{\bee}{\begin {equation*}}
\newcommand{\eee}{\end {equation*}}
\newcommand{\ol}{\overline}

\newcommand{\floor}[1]{\lfloor #1 \rfloor}
\newcommand{\ceil}[1]{\lceil #1 \rceil}

 %DG jump
 %DG average

\newcommand{\indi}{\mathds{1}}

% Essential infimum and supremum
\DeclareMathOperator*{\essinf}{\text{ess\,inf}}
\DeclareMathOperator*{\esssup}{\text{ess\,sup}}

%Useful abbreviations

\newcommand{\MC}{Monte Carlo}

\newcommand{\ML}{multilevel\ }
\newcommand{\MML}{Multilevel\ }

\newcommand{\MCMC}{Markov chain \MC}

%Proximal NN macros

% specific macros
 %\mathfrak 

%--------------------------------------------------

\begin{document}
	
	\title{
		Multilevel Markov Chain \MC \\ for Bayesian Elliptic Inverse Problems \\ with Besov Random Tree Priors}

	\date{}
	
	\author{
		Andreas Stein 
		\footnote{Seminar for Applied Mathematics, Department of Mathematics, ETH Zürich}  \footnote{Corresponding author. Email: andreas.stein@sam.math.ethz.ch}
		\footnote {The datasets generated during and/or analysed during the current study are available from the corresponding author upon reasonable request.}
		\and
		Viet Ha Hoang 
		\footnote{Division of Mathematical Sciences, School of Physical and Mathematical Sciences, Nanyang Technological University Singapore}	
	}
	
	\maketitle
	
	\begin{abstract}
		We propose a \ML \MCMC\,-FEM algorithm to solve elliptic Bayesian inverse problems with "Besov random tree prior". These priors are given by a wavelet series with stochastic coefficients, and certain terms in the expansion vanishing at random, according to the law of so-called \emph{Galton-Watson trees}.
		This allows to incorporate random fractal structures and large deviations in the log-diffusion, which occur naturally in many applications from geophysics or medical imaging.  
		This framework entails two main difficulties: First, the associated diffusion coefficient does not satisfy a uniform ellipticity condition, which leads to non-integrable terms and thus divergence of standard \ML estimators. 
		Secondly, the associated space of parameters is Polish, but not a normed linear space.
		We address the first point by introducing cut-off functions in the estimator to compensate for the non-integrable terms, while the second issue is resolved by employing an independence Metropolis-Hastings sampler. 
		The resulting algorithm converges in the mean-square sense with essentially optimal asymptotic complexity, and dimension-independent acceptance probabilities.
	\end{abstract}
	
	%%%%%%%%%%%%%%%%%%%%%%%%%%%%%%%%%%%%%%%%%%%%%%%%%%%%%%%%%
	\section{Introduction}
	\label{sec:intro}

	%%%%%%%%%%%%%%%%%%%%%%%%%%%%%%%%%%%%%%%%%%%%%%%%%%%%%%%%%
	
	Countless phenomena in the natural sciences and engineering are modeled by partial differential equations (PDEs). Parameters in the corresponding models are in general subject to uncertainty, due to incomplete information, measurement errors, etc. Therefore, the PDE parameters are often considered as random variables, or (possibly) infinite-dimensional random fields.
	A well-studied example are second-order elliptic equations with a random diffusion coefficient as statistical model for uncertain permeability/conductivity in a given physical domain.
	In many applications it is then of interest to solve the associated \emph{inverse problem}, that is, to infer realizations of the model parameter based on discrete observations of the solution to the PDE model. 
	Important applications are electrical resistivity tomography in geophysical engineering (\cite{galetti2018transdimensional}) or electromyography for medical applications (\cite{roerich2021bayesian}). 
	In any case, the inverse problem is ill-posed and requires appropriate regularization techniques.
	
	A popular approach is to consider the inverse problem from a statistical or \emph{Bayesian} perspective (\cite{kaipio2006statistical, calvetti2007introduction, stuart2010inverse, DS17}) with its solution given a by probability measure on a suitable space of parameters. This so-called \emph{posterior} measure is inferred by conditioning an a-priori chosen \emph{prior} measure on the observed data. 
	Well-posedness of the Bayesian inverse problem (BIP) is ensured under mild assumptions, and a-priori model information may be incorporated by selecting an appropriate \emph{prior} model for the parameter space.
	In the wake of the pioneering work of Stuart \cite{stuart2010inverse}, there has been an explosion of interest in BIPs and inverse uncertainty quantification in the past decade, see e.g. \cite{cotter2009bayesian, hoang2012bayesian, DHS12, schillings2013sparse, hosseini2017well, latz2020well, monard2021statistical}. 
	
	From a computational viewpoint, solving the inverse problem amounts to sampling from a conditional probability measure, which is known only up to a normalization constant. 
	Hence, \MCMC\,(MCMC) methods are used extensively in Bayesian inference, see for instance \cite{cotter2013mcmc, DS17, rudolf2018generalization, latz2021generalized}. 
	These acceptance-rejection algorithms rely on forward solves of the corresponding (PDE) model, that involve discretization errors (for instance due to finite element approximations) and possibly come at high computational costs. 
	These issues have been addressed by the development of multilevel \MC\, algorithms for BIPs, a non-exhaustive list includes \cite{hoang2013complexity, scheichl2017quasi, latz2018multilevel, hoang2020analysis, dodwell2019multilevel, madrigal2021analysis}. Multilevel \MCMC\, (ML-MCMC) methods reduce the complexity to compute quantities of interest with respect to the Bayesian posterior by orders of magnitude, when compared to their "standard" MCMC counterparts.  
	However, a drawback of many ML-MCMC approaches for elliptic BIPs is that they require a \emph {uniform ellipticity} condition on the random diffusion coefficient. This requirement excludes the important log-Gaussian prior, let alone models with heavier tails such as Besov priors~\cite{LassasBesov09}. 
	To the best of our knowledge, this issue has only been fully addressed in \cite{hoang2020analysis, hoang2021multilevel} for elliptic resp. parabolic BIPs with Gaussian prior. 

	Unfortunately, Gaussian prior models are not able to capture large deviations, due to their fast decaying tails. Moreover, Gaussian or Besov priors can not incorporate fractal (spatial) structures in the posterior model, which occur naturally in subsurface flow or medical imaging applications. For this reason, \emph{Besov random tree priors} have recently been introduced in~\cite{KLSS21} for linear inverse problems, and have been proposed as log-diffusion coefficient in a random elliptic PDE model in \cite{SS22}.
	These priors are given by a wavelet series with stochastic coefficients, and certain terms in the expansion vanishing at random, according to the law of so-called \emph{Galton-Watson trees}.
	Samples of the corresponding random fields involve fractal geometries, hence the Besov random tree prior may be a viable candidate in applications, 
	where models based on Gaussian random fields do not allow for sufficient flexibility. 
	The degree and Hausdorff dimension of the fractal structures are controlled by a steering parameter $\gb\in[0,1]$, the so-called \emph{wavelet-density}.

	%%%%%%%%%%%%%%%%%%%%%%%%%%%%%%%%%%%%%%%%%%%%%%%%%%%%%%%%%
	\subsection{Contributions}
	\label{sec:Contr}
	%%%%%%%%%%%%%%%%%%%%%%%%%%%%%%%%%%%%%%%%%%%%%%%%%%%%%%%%%
	
	We develop a ML-MCMC-finite element sampling algorithm for elliptic BIPs with Besov random tree prior. The results build on and complement the analysis of the corresponding companion paper~\cite{SS22} on the elliptic forward problem with Besov random tree coefficient.
	The hyper-parameters of the algorithm are tuned with respect to the regularity of the corresponding forward problem and we 
	provide an error-vs-work analysis for the ML-MCMC algorithm. 
	Our complexity estimates show that the proposed approach has essentially the same computational complexity as the forward MLMC method from~\cite{SS22} (up to logarithmic terms), and is therefore asymptotically optimal.   
	The results hold in particular for "standard" Besov priors on the torus with wavelet density $\gb=1$. 
	We emphasize that \emph{no uniform-ellipticity assumptions} are necessary in the forward model, as our ML-MCMC estimator compensates for non-integrable terms in the Bayesian potential without introducing an additional bias. In contrast, failing to take into account the unboundedness of the solution to the forward equation  and the Bayesian potential would result in highly inaccurate results (see e.g. the numerical experiments in \cite{hoang2020analysis}).
	We further use an independence Metropolis-Hastings sampler, hence the algorithm may be applied to general (non-linear) parameter spaces, such as the Polish space of GW trees. 
	While we restrict our analysis to Besov random tree priors in this article, it is straightforward to apply the presented algorithm to different prior models associated to a non-normed parameter space without uniform-ellipticity condition.

	%%%%%%%%%%%%%%%%%%%%%%%%%%%%%%%%%%%%%%%%%%%%%%%%%%%%%%%%%
	\subsection{Layout of this paper}
	\label{sec:Layout}
	%%%%%%%%%%%%%%%%%%%%%%%%%%%%%%%%%%%%%%%%%%%%%%%%%%%%%%%%%
	We fix the basic notation for this article in Section~\ref{sec:Notat}.
	Section~\ref{sec:BIP} introduces general elliptic BIPs and establishes results on well-posedness and data-dependence of the posterior measure.
	We introduce the Besov random tree priors in Section~\ref{sec:besov-rv}, where we also recall well-posedness and pathwise approximation results of the associated elliptic forward problem from~\cite{SS22} for the reader's convenience.
	Section~\ref{sec:BIP-random-tree} introduces the BIP with Besov random tree prior and the combined dimension truncation and finite element approximation of the posterior measure. We further prove a-priori error estimates on the posterior approximation in the Hellinger distance.  
	In Section~\ref{sec:MCMC} we introduce our ML-MCMC algorithm, prove convergence of the root-mean-squared error and provide the corresponding error-vs-work analysis for the entire range of regularity parameters in the prior model. 
	We validate our theoretical findings by several numerical experiments in Section~\ref{sec:numerics}.
	
	%%%%%%%%%%%%%%%%%%%%%%%%%%%%%%%%%%%%%%%%%%%%%%%%%%%%%%%%%
	\subsection{Notations}
	\label{sec:Notat}
	%%%%%%%%%%%%%%%%%%%%%%%%%%%%%%%%%%%%%%%%%%%%%%%%%%%%%%%%%
	
	We denote by $\cV'$ the topological dual for any vector space 
	$\cV$ and by $\dualpair{\cV'}{\cV}{\cdot}{\cdot}$ the associated dual pairing.
	For any metric space $(\cX,d_\cX)$ we denote by $B_\gl(y):=\{x\in\cX|\,d_\cX(x,y)\le \gl\}\subset\cX$ the closed ball with radius $\gl>0$ around $y\in\cX$. 
	If $d_\cX$ is induced by a norm $\left\|\cdot\right\|_\cX$ via $d_\cX(x,y)=\|x-y\|_\cX$, we write 
	$(\cX, \left\|\cdot\right\|_\cX)$ for the corresponding normed space.
	If $\cX=\bR^k$ for a $k\in\bN$, we use the Euclidean metric, unless stated otherwise.

	The Borel $\gs$-algebra of any metric space $\cX$ is generated by the open sets in $\cX$ and denoted by $\cB(\cX)$.
	For any $\gs$-finite and complete measure space $(E,\cE,\mu)$, 
	a Banach space $(\cX, \left\|\cdot\right\|_\cX)$, 
	and 
	integrability exponent $p\in[1,\infty]$,
	we define the Lebesgue-Bochner spaces 
	\begin{equation*}
		L^p(E, \mu; \cX):=\{\varphi:E\to\cX|\;
		\text{$\varphi$ is strongly measurable and $\|\varphi\|_{L^p(E, \mu; \cX)}<\infty$}  \},
	\end{equation*}
	where
	\begin{equation*}
		\|\varphi\|_{L^p(E, \mu; \cX)}:=
		\begin{cases}
			\left(\int_{E}\|\varphi(x)\|_\cX^p\mu(dx)\right)^{1/p},\quad &p\in[1,\infty) \\
			\esssup\limits_{x\in E} \|\varphi(x)\|_\cX,\quad &p=\infty.
		\end{cases}
	\end{equation*}
	In case that $\cX=\bR$, we use the shorthand notation $L^p(E,\mu):=L^p(E,\mu;\bR)$.
	If $E\subset\bR^d$ is a subset of Euclidean space, 
	we assume $\cE=\cB(E)$ and $\mu$ is the Lebesgue measure, 
	and write $L^p(E):=L^p(E,\mu;\bR)$,
	unless stated otherwise.

	For a probability space $(\gO,\cA,\bP)$ and a Banach space-valued random variable $X:\gO\to\cX$, 
	we denote by $\bE_\bP(X)=\int_\gO X(\go)d\bP(\go)$ the expectation of $X$ with respect to $\bP$.
	For any two measures $\bQ_1, \bQ_2$ on $(\gO,\cA)$, that are both absolutely continuous with respect to a reference measure $\bQ_0$ on $(\gO,\cA)$, the \emph{Hellinger distance} of $\bQ_1$ and $\bQ_2$ is given by 
	\begin{equation*}
		d_{\rm Hell}(\bQ_1, \bQ_2):=
		\left(
		\frac{1}{2}\int_\gO \left(\sqrt{\frac{d\bQ_1}{d\bQ_0}(\go)}-\sqrt{\frac{d\bQ_2}{d\bQ_0}(\go)}\right)^2
		d\bQ_0(\go)
		\right)^{1/2}.
	\end{equation*}

	For any bounded and connected spatial domain $\cD\subset \bR^d$ we denote for $k\in\bN$ and $p\in[1,\infty]$ 
	the standard Sobolev space $W^{k,p}(\cD)$ with $k$-order weak derivatives in $L^p(\cD)$.
	The Sobolev-Slobodeckji space with fractional order $s\ge0$ is denoted by $W^{s,p}(\cD)$. 
	Furthermore, $H^s(\cD):=W^{s,2}(\cD)$ for any $s\ge0$ and we use the identification $H^0(\cD)=L^2(\cD)$. 
	Given that $\cD$ is a Lipschitz domain, we define for any $s>1/2$
	\begin{equation}
		H_0^s(\cD):={\rm ker}(\gamma_0) = \{\varphi\in H^s(\cD)|\; \gamma_0(\varphi)=0 \;\mbox{on}\; \partial\cD \},
	\end{equation}
	Here, 
	$\gamma_0\in \cL(H^s(\cD),H^{s-1/2}(\partial \cD))$ denotes the trace operator.
	
	Let $\rC(\ol\cD)$ denote the space of all continuous functions $\varphi:\ol\cD\to\bR$.
	For any $\ga\in\bN$, $\rm \rC^{\ga}(\ol\cD)$ is the space of all functions $\varphi\in\rC(\ol\cD)$ 
	with $\ga$ continuous partial derivatives. 
	For non-integer $\ga>0$,
	we denote by $\rC^{\ga}(\ol\cD)$ the space of all 
	$\varphi\in \rC^{\floor{\ga}}(\ol\cD)$ with $\ga-\floor{\ga}$-Hölder 
	continuous $\floor{\ga}$-th partial derivatives.
	For any positive, real $\ga>0$ we further denote by $\cC^\ga(\cD)$ the \emph{Hölder-Zygmund space} of smoothness $\ga$. 
	We refer to, e.g., \cite[Section 1.2.2]{TriebelTOFS2} for a definition.
	We denote by $\rS(\bR^d)$ the Schwarz space of all smooth, rapidly decaying functions, 
	and with $\rS'(\bR^d)$ its dual, the space of tempered distributions.
	Moreover, for any open set $O\subseteq\bR^d$, 
	$\mathrm D(O)$ denotes the space of all smooth functions $\varphi\in \rC^\infty(O)$ with compact support in $O$.
	
	For the finite element error analysis we introduce a countable set $\mfH\subset(0,\infty)$, 
	and denote by $h\in \mfH$ a generic finite element refinement parameter. 
	We further assume the existence of a strictly decreasing sequence 
	$(h_\ell, \ell\in\bN)\subset\mfH$ such that $\lim_{\ell\to\infty} h_\ell = 0$.

	%%%%%%%%%%%%%%%%%%%%%%%%%%%%%%%%%%%%%%%%%%%%%%%%%%%%%%%%%
	\section{Bayesian Elliptic Inverse Problems}
	\label{sec:BIP}
	%%%%%%%%%%%%%%%%%%%%%%%%%%%%%%%%%%%%%%%%%%%%%%%%%%%%%%%%%
	
	%%%%%%%%%%%%%%%%%%%%%%%%%%%%%%%%%%%%%%%%%%%%%%%%%%%%%%%%%
	\subsection{Forward PDE model}
	\label{sec:forward-PDE}
	%%%%%%%%%%%%%%%%%%%%%%%%%%%%%%%%%%%%%%%%%%%%%%%%%%%%%%%%%
	
	Let $(\gO, \cA, \bP)$ be a complete probability space (of parameters $\go\in\gO$), and let $\cD\subset\bR^d$, $d\in\{1,2,3\}$ be a convex polygonal domain, 
	with the boundary $\partial \cD$ consisting of a finite number of line or plane segments.
	We consider the random (or "parametric") elliptic problem to find $u(\go):\cD\to \bR$ for given $\go\in\gO$ such that 
	\begin{equation}\label{eq:ellipticpde}
		\begin{alignedat}{2}
			-\nabla\cdot(a(\go)\nabla u(\go)) &= f\quad &&\text{in $\cD$}, \quad 
			u(\go) = 0 \quad &&\text{on $\partial\cD$}.
		\end{alignedat}
	\end{equation}
	The diffusion coefficient $a:\gO\to L^\infty(\cD)$ in Problem~\eqref{eq:ellipticpde} is a suitable random field and the source term $f:\cD\to\bR$ is assumed to be a deterministic function for the sake of simplicity.
	For the variational formulation of Problem~\eqref{eq:ellipticpde} 
	we define $H:=L^2(\cD)$, $V:=H_0^1(\cD)$ and recall that 
	$\left\|\cdot\right\|_V:V\to\bR_{\ge 0},\: v\mapsto \|\nabla v\|_H$ defines a norm on $V$ by Poincare's inequality.
	For fixed $\go\in\gO$,  we call $u(\go)\in V$ a \textit{pathwise weak solution} to Problem~\eqref{eq:ellipticpde} if for any $v\in V$ it holds 
	\begin{equation}\label{eq:ellipticpdeweak}
		\int_\cD a(\go)\nabla u(\go)\cdot\nabla v dx = \dualpair{V'}{V}{f}{v}.
	\end{equation}
	To ensure existence and uniqueness of pathwise weak solutions we assume $f\in V'$, and that $a:\gO\to L^\infty(\cD)$ is strongly $\cA/\cB(L^\infty(\cD))$-measurable such that  
	\begin{equation}\label{eq:coervity}
		a_-(\go):=\essinf_{x\in\cD} \: a(x,\go)>0,\quad \text{$P$-a.s.}
	\end{equation}
	It is then a standard result (see, e.g. \cite[Theorem 3.2]{SS22}) to show that the \textit{parameter-to-solution map} $u:\gO\to V$ is well-defined and (strongly) $\cA/\cB(V)$-measurable.
	
	%%%%%%%%%%%%%%%%%%%%%%%%%%%%%%%%%%%%%%%%%%%%%%%%%%%%%%%%%
	\subsection{Bayesian inverse problem}
	\label{sec:BIP-PDE}
	%%%%%%%%%%%%%%%%%%%%%%%%%%%%%%%%%%%%%%%%%%%%%%%%%%%%%%%%%	
	
	To introduce the inverse problem, we consider the \textit{parameter-to-observation map}
	\begin{equation}\label{eq:observation}
		\cG:\gO\to \bR^k,\quad \go\mapsto [\cO \circ u](\go)  
	\end{equation}
	%with observation functional $\cO\in(V')^k$ for $k\in\bN$.
	with bounded linear observation functional $\cO\in((H_0^{\theta_\cO}(\cD))')^k$ for $\theta_\cO\in(1/2,1]$ and $k\in\bN$.
	We assume noisy observations $\gd$ of the form
	\begin{equation}\label{eq:noisy_obs}
		\gd = \cG(\go) + \vartheta,
	\end{equation}
	where $\vartheta$ is centered \emph{Gaussian observation noise} on $\bR^k$.
	Thus, $\vartheta$ is distributed with density  
	\begin{equation}\label{eq:Gauss-noise}
		\rho(x)=(2\pi\det(\gS))^{-\frac{k}{2}}\exp\left( -\frac{1}{2}x^\top\gS^{-1}x\right),
		\quad x\in\bR^k,
	\end{equation}
	for a symmetric and positive definite covariance matrix $\gS\in\bR^{k\times k}$. 
	
	Given an observation $\gd$, we aim to derive the \emph{Bayesian posterior probability measure} $\bP_\gd:=\bP(\cdot\,|\,\gd)$ on $(\gO, \cA)$.
	Note that for given $\go\in\gO$, the distribution of $\gd=\cG(\go)+\vartheta$ (conditional on a given realization $\cG(\go)$) is $\bP$-.a.s. absolutely continuous with respect to $\cN(0, \gS)$, hence $\bP_\gd$ is given by \emph{Bayes' Theorem}: 
	\begin{prop}
		The posterior measure $\bP_\gd$ is absolutely continuous with respect to the prior measure $\bP$, with Radon-Nikodym derivative given by
		\begin{equation}\label{eq:RN_derivative}
			\frac{d\bP_\gd}{d\bP}(\go)
			=\frac{\rho(\gd-\cG(\go))}{\int_\gO \rho(\gd-\cG(\go))d\bP(\go)}
			= \frac{\exp\left(-\Phi(\go; \gd)\right)}
			{Z(\gd)}.
		\end{equation}
		In~\eqref{eq:RN_derivative}, we have defined the \emph{Bayesian potential} $\Phi:\gO\times\bR^k\to\bR$ as
		\begin{equation}\label{eq:potential}
			\Phi(\go; \gd)
			:=-\log\left(\rho(\gd-\cG(\go))\right)
			=\frac{k\log\left(2\pi\det(\gS)\right)}{2} 
			+ \frac{1}{2}(\gd-\cG(\go))^\top\gS^{-1}(\gd-\cG(\go)),
		\end{equation}	
		and the normalizing constant $Z(\gd)>0$ is given by
		\begin{equation}\label{eq:norm-constant}
			Z(\gd):=\int_\gO\exp\left(-\Phi(\go; \gd)\right)d\bP(\go).
		\end{equation}
	\end{prop}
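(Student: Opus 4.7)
The plan is to invoke the measure-theoretic version of Bayes' theorem (as formulated, e.g., in Stuart \cite{stuart2010inverse} or Dashti--Stuart \cite{DS17}). The key inputs are (i) joint measurability of the likelihood $(\go,\gd)\mapsto \rho(\gd-\cG(\go))$, (ii) finiteness and strict positivity of the normalizing constant $Z(\gd)$, and (iii) a disintegration argument on the product space $\gO\times\bR^k$.

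First I would verify that $\cG:\gO\to\bR^k$ is strongly $\cA/\cB(\bR^k)$-measurable. This follows from the measurability of the parameter-to-solution map $u:\gO\to V$ noted after \eqref{eq:coervity}, the continuous embedding $V=H_0^1(\cD)\hookrightarrow H_0^{\theta_\cO}(\cD)$ for $\theta_\cO\in(1/2,1]$, and the boundedness of the observation functional $\cO$. Since $\rho\in\rC(\bR^k;\bR_{>0})$, the map $(\go,\gd)\mapsto\rho(\gd-\cG(\go))$ is then jointly $\cA\otimes\cB(\bR^k)/\cB(\bR)$-measurable and strictly positive everywhere.

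Next, I would realize the observation model \eqref{eq:noisy_obs} on an enlarged probability space carrying $\go\sim\bP$ independently from $\vartheta\sim\cN(0,\gS)$, so that the joint law $\pi$ of $(\go,\gd)$ on $\gO\times\bR^k$ is the pushforward of $\bP\otimes\cN(0,\gS)$ under $(\go,\vartheta)\mapsto(\go,\cG(\go)+\vartheta)$. By a change of variables and \eqref{eq:Gauss-noise}, $\pi$ admits the density $(\go,\gd)\mapsto \rho(\gd-\cG(\go))$ with respect to $\bP\otimes\mathrm{Leb}_{\bR^k}$. Fubini's theorem then yields that the marginal of $\gd$ has Lebesgue density
\begin{equation*}
m(\gd)=\int_\gO\rho(\gd-\cG(\go))\,d\bP(\go)=Z(\gd),
\end{equation*}
and for every $A\in\cA$ and every Borel set $B\subset\bR^k$
\begin{equation*}
\pi(A\times B)=\int_B\left(\int_A\rho(\gd-\cG(\go))\,d\bP(\go)\right)d\gd
=\int_B\frac{1}{Z(\gd)}\int_A\rho(\gd-\cG(\go))\,d\bP(\go)\cdot m(\gd)\,d\gd.
\end{equation*}
By definition of regular conditional probabilities, this identifies $\bP_\gd(A)=Z(\gd)^{-1}\int_A\rho(\gd-\cG(\go))\,d\bP(\go)$ for $m$-a.e.\ $\gd$, which is the first equality in \eqref{eq:RN_derivative}. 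The second equality is a direct algebraic consequence of \eqref{eq:Gauss-noise} and the definition \eqref{eq:potential} of $\Phi$.

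The only substantive point is to verify $0<Z(\gd)<\infty$ so that the posterior is a well-defined probability measure. The upper bound is immediate from the uniform bound $\rho\leq(2\pi\det(\gS))^{-k/2}$. The lower bound uses that $\cG(\go)\in\bR^k$ $\bP$-a.s.\ (a consequence of the a.s.\ existence of the pathwise weak solution under \eqref{eq:coervity}) so that $\rho(\gd-\cG(\go))>0$ on a set of full $\bP$-measure, hence the integral defining $Z(\gd)$ is strictly positive for every $\gd\in\bR^k$. This is the mild regularity point where we tacitly use the forward well-posedness recalled from \cite{SS22}; everything else is a routine application of Fubini and Bayes.
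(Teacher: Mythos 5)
Your proposal is correct and follows essentially the same route as the paper: establish measurability of $\cG$ from that of $u:\gO\to V$ and the boundedness of $\cO$, and then apply the abstract Bayes' theorem of Dashti--Stuart (the paper simply cites \cite[Theorem 14]{DS17} at this point). The only difference is that you additionally unpack the proof of that cited theorem (the joint-law/Fubini/disintegration argument and the verification that $0<Z(\gd)<\infty$), which is a sound but not substantively different argument.
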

	\begin{proof}
		The observation map $\cG:\gO\to \bR^k$ in~\eqref{eq:observation} inherits the measurability from $u:\gO\to V$, hence the claim follows by \cite[Theorem 14]{DS17}.
	\end{proof}
	
	We fix some assumptions on the Bayesian potential $\Phi$ to derive Lipschitz continuity of the map $\gd\to\bP_\gd$ with respect to the Hellinger distance.
	
	\begin{assumption}\label{ass:potential}
		~
		\begin{enumerate}[1)]
			\item\label{item:ass1}  For every $\gl>0$, there exists a constant $\gk_1(\gl)>0$ and a set $\gO_\gl\in\cA$ with $\bP(\gO_\gl)>0$ such that 
			\begin{align*}
				\Phi(\go; \gd) \le \gk_1(\gl),
				\quad \text{for all $\go\in\gO_\gl$ and $\gd\in B_\gl(0)$.}
			\end{align*}
			\item\label{item:ass2} For every $\gl>0$, there exists $\gk_2(\gl, \cdot)\in L^2(\gO, \bP)$ such that 
			\begin{align*}
				|\Phi(\go; \gd)-\Phi(\go; \gd')|\le \gk_2(\gl, \go)\|\gd-\gd'\|_2,
				\quad \text{for all $\go\in\gO$ and $\gd, \gd'\in B_\gl(0)$.}
			\end{align*}
		\end{enumerate}
	\end{assumption}

	\begin{prop}{\cite[Theorem 2.4]{hoang2012bayesian}}\label{prop:lipschitz-hell}
		Under Assumption~\ref{ass:potential}, there exists for any $\gl>0$ a constant $C(\gl)>0$ such that 
		\begin{equation}
			d_{\rm Hell}(\bP_\gd, \bP_{\gd'}) \le C(\gl)\|\gd-\gd'\|_2, \quad 
			\text{for all $\gd, \gd'\in B_\gl(0)$.}
		\end{equation}
	\end{prop}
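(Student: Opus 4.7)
My plan is to mimic the standard Hellinger-continuity argument from the Stuart school. Write $h(\go;\gd):=\exp(-\Phi(\go;\gd)/2)/\sqrt{Z(\gd)}$, so that $d\bP_\gd/d\bP=h(\go;\gd)^2$ and
$$d_{\rm Hell}(\bP_\gd,\bP_{\gd'})^2=\tfrac{1}{2}\int_\gO\bigl(h(\go;\gd)-h(\go;\gd')\bigr)^2\,d\bP(\go).$$
I would split
$$h(\go;\gd)-h(\go;\gd')=\frac{e^{-\Phi(\go;\gd)/2}-e^{-\Phi(\go;\gd')/2}}{\sqrt{Z(\gd)}}+e^{-\Phi(\go;\gd')/2}\bigl(Z(\gd)^{-1/2}-Z(\gd')^{-1/2}\bigr)$$
and control the two pieces separately via $(a+b)^2\le 2a^2+2b^2$.

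First I would prove a uniform lower bound $Z(\gd)\ge Z_-(\gl)>0$ for all $\gd\in B_\gl(0)$: by Assumption~\ref{ass:potential}(\ref{item:ass1}), on $\gO_\gl$ one has $\Phi(\cdot;\gd)\le\gk_1(\gl)$, hence $Z(\gd)\ge e^{-\gk_1(\gl)}\bP(\gO_\gl)>0$. Observe also that by the explicit form~\eqref{eq:potential}, $\Phi(\go;\gd)$ is bounded below by the deterministic constant $\tfrac{k}{2}\log(2\pi\det\gS)$, so $e^{-\Phi}$ is uniformly bounded above by a constant $M:=(2\pi\det\gS)^{-k/2}$ depending only on $\gS$ and $k$. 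For the first (pointwise-in-$\go$) piece, the mean value theorem applied to $x\mapsto e^{-x/2}$ together with Assumption~\ref{ass:potential}(\ref{item:ass2}) gives
$$\bigl|e^{-\Phi(\go;\gd)/2}-e^{-\Phi(\go;\gd')/2}\bigr|\le\tfrac{1}{2}\bigl(e^{-\Phi(\go;\gd)/2}+e^{-\Phi(\go;\gd')/2}\bigr)\gk_2(\gl,\go)\|\gd-\gd'\|_2.$$
Squaring, integrating in $\bP$, using the uniform upper bound $M$ and Assumption~\ref{ass:potential}(\ref{item:ass2}) (which guarantees $\gk_2(\gl,\cdot)\in L^2(\gO,\bP)$), bounds this piece by $C_1(\gl)\|\gd-\gd'\|_2^2$ with $C_1(\gl)$ proportional to $M\|\gk_2(\gl,\cdot)\|_{L^2(\gO,\bP)}^2/Z_-(\gl)$.

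For the second piece, I would use that $x\mapsto x^{-1/2}$ is $\tfrac{1}{2}Z_-(\gl)^{-3/2}$-Lipschitz on $[Z_-(\gl),\infty)$, so that $|Z(\gd)^{-1/2}-Z(\gd')^{-1/2}|\le\tfrac{1}{2}Z_-(\gl)^{-3/2}|Z(\gd)-Z(\gd')|$. A direct application of the same mean-value-theorem estimate combined with the Cauchy–Schwarz inequality yields $|Z(\gd)-Z(\gd')|\le C\,\|\gk_2(\gl,\cdot)\|_{L^2(\gO,\bP)}\|\gd-\gd'\|_2$. Together with $\int_\gO e^{-\Phi(\go;\gd')}\,d\bP(\go)=Z(\gd')\le M$ this controls the second piece by $C_2(\gl)\|\gd-\gd'\|_2^2$. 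Summing the two contributions and taking square roots yields the claim, with $C(\gl)$ depending monotonically on $\gk_1(\gl)$, $\|\gk_2(\gl,\cdot)\|_{L^2(\gO,\bP)}$, $\bP(\gO_\gl)^{-1}$ and the deterministic constants $k,\gS$ appearing in $\Phi$.

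The main obstacle is really only bookkeeping: one must track the dependence on $\gl$ consistently through the lower bound $Z_-(\gl)$ and the $L^2(\bP)$-norm of $\gk_2(\gl,\cdot)$, and verify that the mean-value estimates above require no further integrability beyond what Assumption~\ref{ass:potential} already provides. No genuinely new ingredient is needed, which is why the statement can be cited verbatim from~\cite[Theorem 2.4]{hoang2012bayesian}.
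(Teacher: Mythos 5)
Your proof is correct and is essentially the standard argument behind the cited result \cite[Theorem 2.4]{hoang2012bayesian}: the paper supplies no proof of its own beyond that citation, and your decomposition of $h(\cdot;\gd)-h(\cdot;\gd')$, the lower bound $Z(\gd)\ge e^{-\gk_1(\gl)}\bP(\gO_\gl)$ (which is exactly how the paper later proves Corollary~\ref{cor:lower-bound-Z}), and the two mean-value estimates controlled by $\gk_2(\gl,\cdot)\in L^2(\gO,\bP)$ are precisely the expected ingredients. The one point worth flagging is that the uniform bound $e^{-\Phi}\le M$ does not follow from Assumption~\ref{ass:potential} alone but from the explicit Gaussian form~\eqref{eq:potential}; you correctly identify and invoke this, and it is implicit in the paper's setting (the cited general theorem assumes a corresponding lower bound on $\Phi$).
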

	
	The proof of \cite[Theorem 2.4]{hoang2012bayesian} also yields a lower bound on the normalizing constants $Z(\gd)$ in~\eqref{eq:norm-constant}, that only depends on the norm of the data $\gd$: 
	
	\begin{cor}\label{cor:lower-bound-Z}
		Under Assumption~\ref{ass:potential}, there exists for any $\gl>0$ a constant $c(\gl)>0$ such that 
		\begin{equation}
			Z(\gd)\ge c(\gl)>0\quad
			\text{for all $\gd \in B_\gl(0)$.}
		\end{equation}
	\end{cor}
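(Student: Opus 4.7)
The plan is to obtain the lower bound on $Z(\gd)$ by restricting the defining integral in~\eqref{eq:norm-constant} to the set $\gO_\gl$ of positive prior mass supplied by Assumption~\ref{ass:potential}\,\ref{item:ass1}, on which the Bayesian potential $\gP$ admits a uniform upper bound in $\gd\in B_\gl(0)$.

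Concretely, fix $\gl>0$ and let $\gO_\gl\in\cA$ and $\gk_1(\gl)>0$ be as in Assumption~\ref{ass:potential}\,\ref{item:ass1}, so that $\gP(\go;\gd)\le \gk_1(\gl)$ for all $\go\in\gO_\gl$ and all $\gd\in B_\gl(0)$. Since $\exp(-\gP(\cdot;\gd))\ge 0$, I would bound
\begin{equation*}
    Z(\gd)=\int_\gO\exp(-\gP(\go;\gd))\,d\bP(\go)
    \ge \int_{\gO_\gl}\exp(-\gP(\go;\gd))\,d\bP(\go)
    \ge \bP(\gO_\gl)\exp(-\gk_1(\gl)),
\end{equation*}
which holds uniformly for $\gd\in B_\gl(0)$. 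The definition $c(\gl):=\bP(\gO_\gl)\exp(-\gk_1(\gl))$ then delivers a strictly positive constant (since $\bP(\gO_\gl)>0$ by assumption and the exponential is positive), independent of the particular $\gd\in B_\gl(0)$.

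There is effectively no obstacle here: the statement is an immediate byproduct of the uniform upper bound on $\gP$ on a set of positive probability, exactly the ingredient isolated in Assumption~\ref{ass:potential}\,\ref{item:ass1} for the proof of Proposition~\ref{prop:lipschitz-hell}. Item~\ref{item:ass2} of Assumption~\ref{ass:potential} is not needed for this corollary; it enters only when quantifying the $\gd$-dependence in the numerator of~\eqref{eq:RN_derivative}. Thus this reduces to a one-line application of monotonicity of the integral together with the pointwise bound.
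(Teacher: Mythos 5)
Your proof is correct and coincides with the paper's own argument: both restrict the integral defining $Z(\gd)$ to the set $\gO_\gl$ from Assumption~\ref{ass:potential}, apply the uniform bound $\Phi(\go;\gd)\le\gk_1(\gl)$ there, and set $c(\gl)=\bP(\gO_\gl)\exp(-\gk_1(\gl))$. Your additional remark that Item~\ref{item:ass2} is not needed is accurate.
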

	
	\begin{proof}
		For fixed $\gl>0$ and any $\gd\in B_\gl(0)$ we have by Item~\eqref{item:ass1} of Assumption~\ref{ass:potential} 
		\begin{equation*}
			Z(\gd)=\int_\gO\exp\left(-\Phi(\go; \gd)\right)d\bP(\go)
			\ge \int_{\gO_\gl} d\bP(\go) \exp\left(-\gk_1(\gl)\right)
			=:c(\gl)
			>0.
		\end{equation*}
	\end{proof}
	
	We consider \emph{Besov random tree priors} as in \cite{KLSS21, SS22} in this article. 
	This particular prior has been used in \cite{SS22} to model the log-diffusion coefficient $b:=\log(a)$ in the elliptic forward problem~\eqref{eq:ellipticpde}. We review the construction of Besov random tree priors in the next section, and collect some results on well-posedness of~\eqref{eq:ellipticpdeweak} and regularity of pathwise weak solutions.
	
	\section{Besov Random Tree Priors}
	\label{sec:besov-rv}
	We introduce in this section Besov random tree priors and the associated elliptic forward problem to find $u$ in~\eqref{eq:ellipticpde} with $\log(a)$ given by a Besov random tree prior.
	We start by recalling some tools from multiresolution analysis (MRA) and the wavelet representation of Besov spaces. Thereafter we construct the Besov random tree prior, and record several results on well-posedness and regularity of the associated elliptic forward problem. We then discuss pathwise approximations by dimension truncation of the prior and finite elements in the last part of this section. The latter is in turn necessary to sample (approximately) from the posterior measure $\bP_\gd$ in Section~\ref{sec:BIP}.
	
	\subsection{MRA and wavelet representation of Besov spaces}
	\label{subsec:besovspaces}
	Let $\bT^d:=[0,1]^d$ denote the $d$-dimensional torus for $d\in\bN$.
	We briefly recall the construction of orthonormal wavelet basis on $L^2(\bR^d)$ and $L^2(\bT^d)$ 
	and the wavelet representation of the associated Besov spaces. 
	For more detailed accounts 
	we refer to \cite[Chapter 1]{TriebelFctSpcDom}, \cite[Chapter 1.2]{TriebelTOFS4}, 
	and to \cite[Chapter 5]{daubechies1992ten} for orthonormal wavelets in MRA.
	
	Let $\phi$ and $\psi$ be compactly supported scaling and wavelet functions in $\rC^{\ga}(\bR)$, $\ga\ge 1$,
	that are suitable for multi-resolution analysis in $L^2(\bR)$.  
	Further, we assume that $\psi$ satisfies \emph{the vanishing moment condition} 
	\begin{equation}\label{eq:vanmoments}
		\int_\bR \psi(x) x^m dx = 0, \quad m\in\bN_0,\; m<\ga.
	\end{equation}
	One example are Daubechies wavelets with $ M := \lfloor \ga \rfloor \in\bN$ vanishing moments 
	(also known as ${\mathrm D}{\mathrm B}(\lfloor \ga \rfloor)$-wavelets), 
	that have support $[-M+1,M]$ and are in $\rC^1(\bR)$ for $M\ge 5$ 
	(see, e.g., \cite[Section 7.1]{daubechies1992ten}).
	For any $j\in\bN_0$ and $k\in\bZ$, define the scaled and translated functions
	\begin{equation}
		\psi_{j,k,0}(x):=\phi(2^jx-k),
		\quad \text{and} \quad
		\psi_{j,k,1}(x):=\psi(2^jx-k),
		\quad x\in\bR.
	\end{equation}
	As $\|\phi\|_{L^2(\bR)}=\|\psi\|_{L^2(\bR)}=1$, it follows that 
	$((\psi_{0,k,0}), k\in\bZ)\,\cup\,((2^{j/2}\psi_{j,k,1}), (j,k)\in\bN_0\times\bZ)$ 
	is an orthonormal basis of $L^2(\bR)$.

	A corresponding \emph{isotropic}\footnote{Anisotropic tensorizations leading upon truncation 
		to so-called ``hyperbolic cross approximations'' may be considered. 
		As such constructions tend to
		inject preferred directions along the cartesian axes into approximations, 
		we do not consider them here.}
	wavelet basis that is orthormal in $L^2(\bR^d)$, $d\geq 2$ 
	may be constructed by tensorization of univariate MRAs.
	We define index sets $\cL_0:=\{0,1\}^d$ and $\cL_j:=\cL_0\setminus \{(0,\dots,0)\}$ for $j\in\bN$.
	We note that $\cL_j$ has cardinality $|\cL_j|=2^d$ if $j=0$, and $|\cL_j|=2^d-1$ otherwise.
	For any $l\in\cL_0$, we define furthermore  
	\begin{equation}\label{eq:psi_scaled}
		\psi_{j,k,l}(x):=2^{dj/2}\prod_{i=1}^d\psi_{j,k_i,l(i)}(x_i),
		\quad  j\in \bN_0,\; k\in\bZ^d,\; x\in\bR^d,
	\end{equation}
	to obtain that $((\psi_{j,k,l}),\; j\in \bN_0,\, k\in\bZ^d,\, l\in\cL_j)$ 
	is an orthonormal basis of $L^2(\bR^d)$.
	
	Orthonormal bases consisting of locally supported, 
	periodic functions on the torus $\bT^d$ can be introduced by
	tensorization, as e.g. in \cite[Section 1.3]{TriebelFctSpcDom}. 
	We utilize the construction in \cite[Section 2.1]{SS22}:
	Given $\phi$ and $\psi$, 
	we fix a scaling factor $w\in\bN$ such that 
	\begin{equation*}
		\text{supp}(\psi_{w,0,l})\subset 
		\left\{x\in\bR^d\big|\, \|x\|_2<\frac{1}{2}\right\},
		\quad l\in\cL_0.
	\end{equation*}
	With this choice of $w$, it follows for $j\in\bN_0$ that 
	\begin{equation*}
		\text{supp}(\psi_{j+w,0,l})\subset 
		\left\{x\in\bR^d\big|\, \|x\|_2<2^{-j-1}\right\}.
	\end{equation*}
	Now let 
	$K_{j}:=\{k\in\bZ^d|\,0\le k_1,\dots,k_d<2^{j}\}\subset 2^{j}\bT^d$ and note that $|K_{j+w}|=2^{d(j+w)}$.
	Define the one-periodic wavelet functions
	\begin{equation*}
		\psi_{j,k,l}^{per}(x):=\sum_{n\in\bZ^d} \psi_{j,k,l}(x-n), 
		\quad j\in\bN_0,\; k\in K_j,\;l\in\cL_0,\;x\in\bR^d,
	\end{equation*}
	and their restrictions to $\bT^d$ by 
	\begin{equation}\label{eq:torusbasis}
		\psi_{j,k}^{l}(x)
		:=
		\psi_{j,k,l}^{per}(x), \quad j\in\bN_0,\; k\in K_j,\;l\in\cL_0,\;x\in\bT^d.
	\end{equation}
	We now obtain for the index set $\cI_{w}:=\{j\in\bN_0,\; k\in K_{j+w},\;l\in\cL_j\}$ that 
	\begin{equation}
		\mathbf\Psi_w:=\left((\psi_{j+w,k}^l),\;(j,k,l)\in \cI_w \right)
	\end{equation}
	is a $L^2(\bT^d)$-orthonormal basis, see \cite[Proposition 1.34]{TriebelFctSpcDom}. 
	We further define the subspace 
	$V_{w+1}:=\text{span}\{\psi_{w,k}^l|\; k\in K_{w},\ l\in\cL_0\}\subset L^2(\bT^d)$ 
	and observe that $\dim(V_{w+1})=2^{d(w+1)}$.
	By the multiresolution analysis for one-periodic, univariate functions 
	in \cite[Chapter 9.3]{daubechies1992ten}, 
	it follows that 
	$( (\psi_{j,k}^l) ,\;j\le w,\; k\in K_j,\ l\in\cL_j)$ is another orthonormal basis of $V_{w+1}$.
	Hence, 
	we may replace the first $2^{d(w+1)}$ basis functions in~\eqref{eq:torusbasis} 
	to obtain the (computationally more convenient) 
	$L^2(\bT^d)$-orthonormal basis
	\begin{equation}\label{eq:torusbasis2}
		\mathbf\Psi:=\left((\psi_{j,k}^l),\;(j,k,l)\in \cI_{\mathbf \Psi} \right),
		\quad
		\cI_{\mathbf \Psi}:=\{j\in\bN_0,\; k\in K_j,\;l\in\cL_j\}.
	\end{equation}
	
	\begin{defi}\label{def:besovspace}
		Let $s>0$, $p\in[1,\infty]$ and $\varphi\in L^2(\bT^d)$.
		We define the \emph{Besov norms} 
		\begin{equation}\label{eq:besovnorm}
			\|\varphi\|_{B_{p,p}^s(\bT^d)}:=
			\left(
			\sum_{(j,k,l)\in\cI_{\mathbf\Psi}} 2^{jp(s+\frac{d}{2}-\frac{d}{p})} |(\varphi 	,\psi_{j,k}^l)_{L^2(\bT^d)}|^p
			\right)^{1/p},
			\quad
			p\in[1,\infty),
		\end{equation}
		and
		\begin{equation}\label{eq:besovnorminf}
			\|\varphi\|_{B_{\infty,\infty}^s(\bT^d)}:=
			\sup_{(j,k,l)\in\cI_{\mathbf\Psi}} 2^{j(s+\frac{d}{2})} |(\varphi ,\psi_{j,k}^l)_{L^2(\bT^d)}|
			<\infty.
		\end{equation}
		The corresponding \emph{Besov spaces} on $\bT^d$ are given by 
		\begin{equation}\label{eq:besovspace}
			B_{p,p}^s(\bT^d):=\{\varphi\in L^2(\bT^d)|\, \|\varphi\|_{B_{p,p}^s(\bT^d)}<\infty\}.
		\end{equation}
	\end{defi}

	We fix some notation for Besov, H\"older and Zygmund spaces to be used in the
	remainder of this paper.
	As the (periodic) domain $\bT^d$ does not vary in the subsequent analysis, 
	we use the abbreviations 
	$B_p^s:=B_{p,p}^s(\bT^d)$, $\rC^\ga:=\rC^\ga(\bT^d)$ and $\cC^\ga:=\cC^\ga(\bT^d)$
	for convenience in the following.

	\subsection{Besov random tree priors}
	\label{subsec:besov-rv}
	We introduce Besov random tree priors as wavelet expansions with respect to $\mathbf\Psi$, 
    where the $L^2(\bT^d)$-orthogonal projection coefficients are replaced by $p$-exponential random variables as a first step.
    To this end, let $p\in[1,\infty)$ and consider an independent and identically distributed (i.i.d.) sequence 
    $X=((X_{j,k}^l), (j,k,l)\in\mathbf{\cI_\Psi})$ of \emph{$p$-exponential} random variables. 
    That is, each $X_{j,k}^l$ is distributed with density 
    \begin{equation}\label{eq:p-exponential}
    	\phi_p(x):=\frac{1}{c_p}\exp\left(-\frac{|x|^p}{\gk}\right), \quad x\in\bR,
    	\qquad c_p := \int_\bR \exp\left(-\frac{|x|^p}{\gk}\right) dx,
    \end{equation}
    where $\gk>0$ is a fixed scaling parameter. 
    Let $\bQ_0$ denote the associated one-dimensional $p$-exponential measure on $(\bR, \cB(\bR))$.
    We recover the normal distribution with variance $\frac{\gk}{2}$ if $p=2$, 
    and the Laplace distribution with scaling $\gk$ for $p=1$.    
        
   	The random tree structure in our prior construction 
	is based on certain set-valued random variables, so-called \emph{Galton-Watson trees}. 
	Definitions of discrete trees, Galton-Watson (GW) trees, along with their basic properties,
	are given in Appendix A of \cite{SS22}, that treats the elliptic forward problem.
	
	\begin{defi}\cite[Definition 3]{KLSS21}\label{def:randomtree-prior}
		Let $s>0$, $p\in[1,\infty)$, and $X=((X_{j,k}^l), (j,k,l)\in\mathbf{\cI_\Psi})$ 
        be a sequence of $p$-exponentially distributed random variables.
		Let $\mfT$ denote the set of all trees with no infinite node (cf. \cite[Definition~A.1]{SS22},
		and let $T:\gO\to \mfT$ be a GW tree (cf. \cite[Definition~A.3]{SS22}
		with offspring distribution $\cP=\textrm{Bin}(2^d, \gb)$ for $\gb\in[0,1]$, and independent of $X$.
		Furthermore, let $\mfI_T$ be the set of wavelet indices associated to $T$ from~\cite[Equation~(79)]{SS22}.

		Define the \emph{random tree index set} 
		$\cI_T(\go):=\{(j,k,l)|\; (j,k)\in \mfI_T(\go),\; l\in\cL_j\}$ 
		and
		\begin{equation}\label{eq:randomtreeprior}
			b_T(\go):= \sum_{(j,k,l)\in \cI_T(\go)} \eta_j X_{j,k}^l(\go)\psi_{j,k}^l, \quad \go\in\gO,
			\quad\text{where}\quad
			\eta_j := 2^{-j(s+\frac{d}{2}-\frac{d}{p})}, 
			\quad j\in\bN_0.
		\end{equation}
		We refer to $b_T$ as a \emph{$B_p^s$-random variable with wavelet density $\gb$}.
	\end{defi}
	
	\begin{rem}\label{rem:Besov-random-tree}
		We obtain immediately the classical \emph{Besov priors} 
		as introduced in \cite{LassasBesov09} as special case with $\gb=1$,
		where $\cI_T(\go) = \cI_{\mathbf\Psi}$ holds almost surely.
		The series~\eqref{eq:randomtreeprior} 
		has a natural interpretation as orthogonal expansion of a random function 
		with respect to the (deterministic, fixed) basis $\mathbf \Psi$.
		The tree structure of $b_T$ gives rise to random fractals on $\bT^d$, 
		that occur whenever 
		the tree $T$ in Definition~\ref{def:randomtree-prior} does not terminate after a finite number of nodes. 
		It follows by \cite[Lemma~A.4]{SS22},
		that the latter event occurs with positive probability if $\gb\in(2^{-d},1]$. 
		In this case the Hausdorff dimension of the fractals is $d+\log_2(\gb)\in(0,d]$, 
		see \cite[Section 3]{KLSS21} for details.
	\end{rem}
	
	To treat elliptic inverse problems with $b_T$ as prior model, 
	we describe the corresponding probability space of parameters.
	Let $\bQ_0$ denote the univariate, $p$-exponential measure on $(\bR, \cB(\bR))$ 
	of the random variables $X_{j,k}^l$ with Lebesgue density as in~\eqref{eq:p-exponential}.
	The product-probability space of the $p$-exponentials $X$ is given by $(\gO_p, \cA_p, \bQ_p)$, 
	where 
	\begin{equation}
		\gO_p:=\bR^\bN,\quad  
		\cA_p:=\bigotimes_{n\in\bN} \cB(\bR),\quad \text{and} \quad
		\bQ_p:=\bigotimes_{n\in\bN} \bQ_0.
	\end{equation}
	Now let $s>0$ and $p\in[1,\infty)$ be fixed such that $s>\frac{d}{p}$. 
	We define the weighted $\ell^p$-spaces 
	\begin{equation*}
		\ell_s^p:=\left\{ x=\left(x_{j,k}^l, (j,k,l)\in\mathbf{\cI_\Psi}\right)\in\bR^\bN|\;
		\|x\|_{s,p}<\infty\right\},
	\end{equation*}
	where 
	\begin{equation*}
		\|x\|_{s,p}
		:= \left(\sum_{(j,k,l)\in\mathbf{\cI_\Psi}} 2^{-jps}|x_{j,k}^l|^p \right)^{1/p}.
	\end{equation*}
	As $1\le p<\infty$, $(\ell_s^p, \left\|\cdot\right\|_{s,p})$ is a separable Banach space.
    We observe that for $X\sim \bQ_p$ it holds
	\begin{align*}
		\bE(\|X\|_{s,p}^p)
		\le
		\sum_{(j,k,l)\in\mathbf{\cI_\Psi}} 2^{-jps}\bE(|X_{j,k}^l|^p) 
		\le 
		C \sum_{j=0}^\infty 2^{-jps} 2^{dj} (2^d-1) 
		\le 
		C \sum_{j=0}^\infty 2^{-jp(s-\frac{d}{p})}
		<\infty, 
	\end{align*}
	since $s>\frac{d}{p}$, thus $\bQ_p$ is concentrated on $\ell_s^p$. 
	Therefore, we may regard $(\ell_s^p, \cB(\ell_s^p), \bQ_p)$ as 
        probability space of random coefficient sequences $X$ in the expansion~\eqref{eq:randomtreeprior}. 
	
	The set-valued random variable $T$ is a GW tree, and hence takes values in the Polish space $(\mfT, \gd_{\mfT})$ 
	of all trees with no infinite node. 
	The metric $\gd_\mfT$ and the associated Borel $\gs$-algebra $\cB(\mfT)$ with respect to $\mfT$ 
	can be expressed explicitly \cite[Def.~A2]{SS22}, or in \cite[Sec.~2.1]{abraham2015GW-Trees}.
	The image measure $\bQ_T$ of the GW tree $T$ on $(\mfT, \cB(\mfT))$ then solely depends on the parameters $\gb$ and $d$ of the offspring distribution $\cP=\textrm{Bin}(2^d, \gb)$, 
	and is given in \cite[Equation~(77)]{SS22}.
	Hence, the parameter probability space of GW trees is given by $(\mfT, \cB(\mfT), \bQ_T)$.
	
	To combine the random coefficients $X$ with the GW tree $T$, we define the cartesian product 
	$\gO:=\ell_s^p\times \mfT$ and equip $\gO$ with the metric 
	\begin{equation*}
		d_\gO((x_1, {\bf t_1}),(x_2, {\bf t_2})) := \|x_1-x_2\|_{s,p}+\gd_\mfT({\bf t_1}, {\bf t_2}). 
	\end{equation*}
	
	\begin{prop}
		The space $(\gO, d_\gO)$ is Polish with Borel $\gs$-algebra 
		given by $\cB(\gO)=\cB(\ell_s^p\times \mfT)=\cB(\ell_s^p)\otimes\cB(\mfT)$.
	\end{prop}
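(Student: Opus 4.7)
The plan is to assemble the result from three standard facts about Polish spaces and their products, since both factors $\ell_s^p$ and $\mathfrak{T}$ have already been identified (or are easily identified) as Polish.

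First I would verify that $(\ell_s^p, \|\cdot\|_{s,p})$ is Polish. Since $1 \le p < \infty$, it is a separable Banach space (the finitely supported sequences with rational entries form a countable dense subset), hence completeness plus separability makes it Polish. The space $(\mathfrak{T}, \delta_\mathfrak{T})$ was already noted in the text to be Polish, with a reference to \cite[Def.~A2]{SS22} and \cite[Sec.~2.1]{abraham2015GW-Trees}, so no additional work is needed there.

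Next I would prove that the product $(\Omega, d_\Omega)$ is Polish. Separability is immediate: if $D_1 \subset \ell_s^p$ and $D_2 \subset \mathfrak{T}$ are countable dense, then $D_1 \times D_2$ is countable and dense in $\Omega$ with respect to $d_\Omega$. Completeness also follows routinely: the metric $d_\Omega$ is the sum of the factor metrics, so a Cauchy sequence $(x_n, \mathbf{t}_n)$ in $\Omega$ has both coordinate sequences Cauchy in their respective factors, and the limits provide a limit in $\Omega$. This gives the Polish property.

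Finally I would establish the $\sigma$-algebra identity $\mathcal{B}(\Omega) = \mathcal{B}(\ell_s^p) \otimes \mathcal{B}(\mathfrak{T})$. The general fact I will invoke is that for any two separable metric spaces $\mathcal{X}, \mathcal{Y}$ with the sum (or max) product metric, one has $\mathcal{B}(\mathcal{X} \times \mathcal{Y}) = \mathcal{B}(\mathcal{X}) \otimes \mathcal{B}(\mathcal{Y})$. The inclusion $\mathcal{B}(\mathcal{X}) \otimes \mathcal{B}(\mathcal{Y}) \subseteq \mathcal{B}(\mathcal{X} \times \mathcal{Y})$ holds because the coordinate projections are continuous, hence Borel measurable. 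For the reverse inclusion I would use separability of both factors to write every open set in the product as a countable union of rectangles $U \times V$ with $U, V$ basic open in the respective factors, which lies in the product $\sigma$-algebra. I expect no serious obstacle here; the only mild subtlety is the need for separability of both factors to justify the second inclusion, but that was already verified in the first step, so the argument goes through cleanly.
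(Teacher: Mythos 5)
Your proposal is correct and follows essentially the same route as the paper: both reduce the claim to the Polishness of the two factors (with $\mfT$ handled by the cited reference) and then invoke the standard facts that a finite product of Polish spaces is Polish and that the Borel $\gs$-algebra of a product of separable metric spaces equals the product of the Borel $\gs$-algebras. The only difference is that the paper cites \cite[Corollary 3.39]{AB06} and \cite[Theorem 4.44]{AB06} for these two facts, whereas you prove them directly; the arguments you sketch are the standard ones and go through.
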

	\begin{proof}
		By~\cite[Lemma~2.1]{abraham2015GW-Trees} the metric space $(\mfT, \gd_\mfT)$ 
		with $\gd_\mfT$ given in \cite[Def.~A.2]{SS22} is complete and separable. 
		Separability and completeness of $(\gO, d_\gO)$ follows then by \cite[Corollary 3.39]{AB06}. 
		Furthermore,  
		$\cB(\gO)=\cB(\ell_s^p\times\mfT)=\cB(\ell_s^p)\otimes\cB(\mfT)$ 
		holds by \cite[Theorem 4.44]{AB06}.
	\end{proof}
	
	We are now ready to define the prior probability space associated to the 
	$\ell_s^p\times \mfT$-valued random variable $(X, T)$: 
	Let $(\gO, \cA, \bP)$ denote the product probability space given by 
	\begin{equation}
		\gO:=\ell_s^p\times \mfT,\quad  
		\cA:=\cB(\ell_s^p)\otimes\cB(\mfT),\quad \text{and} \quad
		\bP:=\bQ_p \otimes \bQ_T.
	\end{equation}
	We remark that the product structure of the measure $\bP=\bQ_p \otimes \bQ_T$ 
	is tantamount to stochastic independence of $X$ and $T$.
	
	It still remains to identify a realization of the random variable $(X,T)$ 
	with the corresponding random tree prior $b_T$. 
	To this end, we consider the canonical mapping
	\begin{equation}
		b_T:\gO \to L^2(\bT^d),\quad 
		\go\mapsto \sum_{(j,k,l)\in \cI_T(\go)} \eta_j X_{j,k}^l(\go)\psi_{j,k}^l.
	\end{equation}
	The map $b_T:\gO \to L^2(\bT^d)$ is indeed well-defined 
	since $\|b_T\|_{L^2(\bT^d)}<\infty$ holds due to $s>\frac{d}{p}$.
	Moreover, $b_T$ is $\cA/\cB(L^2(\bT^d))$-measurable, 
	as is seen in Proposition~\ref{prob:measurability} below. 
	Therefore, 
	the pushforward probability measure of $b_T$ under the prior measure $\bP$ is given via
	\begin{equation}\label{eq:beson-measure}
		b_T\#\bP(B):=\bP(b_T^{-1}(B)),\quad B\in \cB(L^2(\bT^d)).
	\end{equation}
	The associated probability space of $B_p^s$-random variables $b_T$ with wavelet density $\gb$ is given by 
	\begin{equation*}
		(L^2(\bT^d),\,\cB(L^2(\bT^d)),\,b_T\#\bP).
	\end{equation*}
	We know from~\cite[Remark 2.9]{SS22} that $b_T\#\bP$ is concentrated on $B_p^t$ for any $t\in(0, s-\frac{d}{p})$.
	A more refined result that concentrates $b_T\#\bP$ on Besov spaces $B_q^t$ for $q\ge 1$ 
	with smoothness index $t=t(s,d,p,\gb,q)$ is given in Theorem~\ref{thm:random-tree-reg} below.
	We conclude this section by two results on measurability and pathwise regularity of $b_T$.
	
	\begin{prop}{\cite[Proposition 2.10]{SS22}}\label{prob:measurability}
		Let $s>\frac{d}{p}$, $\gb\in[0,1]$, and 
		let $b_T$ be a $B_p^s$-random variable with wavelet density $\gb$.
		Then $b_T:\gO\to \rC(\bT^d)$ and $b_T$ is (strongly) $\cA/\cB(\rC(\bT^d))$-measurable.
	\end{prop}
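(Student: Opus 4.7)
My approach splits the claim into two stages: uniform convergence of the wavelet series on $\bT^d$ (giving $b_T(\omega)\in\rC(\bT^d)$ a.s.) and Borel measurability of $b_T:\gO\to\rC(\bT^d)$.

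\textbf{Pathwise regularity.} I would group the series by scale, writing $b_T(\omega) = \sum_{j\ge 0} S_j(\omega)$ with
$$S_j(\omega):=\sum_{k\in K_j,\, l\in\cL_j:\,(j,k,l)\in\cI_T(\omega)}\eta_j\, X_{j,k}^l(\omega)\,\psi_{j,k}^l.$$
Two properties of the periodized wavelet basis enter: the sup-norm bound $\|\psi_{j,k}^l\|_{L^\infty(\bT^d)}\lesssim 2^{dj/2}$ from~\eqref{eq:psi_scaled}, and the bounded-overlap property (compact support of $\phi,\psi$ implies that at any $x\in\bT^d$ only $O(1)$ wavelets at scale $j$ are non-zero). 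Combined with $\eta_j = 2^{-j(s+d/2-d/p)}$, these yield
$$\|S_j(\omega)\|_{L^\infty(\bT^d)} \lesssim 2^{-j(s-d/p)}\max_{k,l}|X_{j,k}^l(\omega)|.$$
Since there are $O(2^{dj})$ i.i.d.\ $p$-exponential variables at scale $j$ with tail $\bQ_0(|X|>x)\le C\exp(-x^p/\gk)$, a union bound with threshold $t_j = (Cj)^{1/p}$ and Borel-Cantelli give $\max_{k,l}|X_{j,k}^l(\omega)|\lesssim j^{1/p}$ for $j$ large, $\bP$-almost surely. Since $s>d/p$ the series $\sum_j 2^{-j(s-d/p)}j^{1/p}$ converges, so the wavelet series converges absolutely and uniformly on $\bT^d$; as each $S_j(\omega)$ is a finite sum of continuous wavelets, the uniform limit $b_T(\omega)$ lies in $\rC(\bT^d)$.

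\textbf{Measurability.} Each coefficient map $\omega\mapsto \eta_j X_{j,k}^l(\omega)\,\1_{\{(j,k,l)\in\cI_T(\omega)\}}$ is $\cA$-measurable: the factor $X_{j,k}^l$ is measurable by the product structure of $\bQ_p$, and the indicator is measurable because $\mfI_T$ (hence $\cI_T$) is built from $T$, which is Borel-measurable into $(\mfT,\gd_\mfT)$ by the GW construction. Hence every finite partial sum $B_N(\omega):=\sum_{j\le N}S_j(\omega)$ is a measurable map $\gO\to\rC(\bT^d)$, since the $\psi_{j,k}^l$ are fixed elements of the separable Banach space $\rC(\bT^d)$. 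The a.s.\ uniform convergence $B_N\to b_T$ together with separability of $\rC(\bT^d)$ then yields strong $\cA/\cB(\rC(\bT^d))$-measurability via Pettis' theorem.

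The main obstacle is the probabilistic growth estimate on $\max_{k,l}|X_{j,k}^l|$: extracting the $j^{1/p}$ rate from the $p$-exponential tails via the union bound plus Borel-Cantelli is what drives the uniform-convergence argument, and it is exactly the hypothesis $s>d/p$ that makes the resulting geometric factor $2^{-j(s-d/p)}$ summable. The wavelet-series bookkeeping and the measurability step are then essentially routine, provided one invokes separability of $\rC(\bT^d)$ to identify Borel and strong measurability.
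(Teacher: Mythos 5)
The paper does not prove this proposition --- it is imported verbatim from the companion paper as \cite[Proposition~2.10]{SS22} --- so there is no in-paper argument to set yours against; I can only judge the proposal on its own terms, and on those terms it is correct and is the argument one would expect that reference to contain. The scale-block estimate $\|S_j\|_{L^\infty(\bT^d)}\lesssim 2^{-j(s-d/p)}\max_{k,l}|X_{j,k}^l|$ (bounded overlap plus $\|\psi_{j,k}^l\|_{L^\infty}\lesssim 2^{dj/2}$), the union bound over the $O(2^{dj})$ coefficients at scale $j$ with the $p$-exponential tail, and Borel--Cantelli give exactly the almost-sure uniform convergence needed, and the measurability step is routine once one notes that $\{\mathbf t\in\mfT\,:\,u\in\mathbf t\}$ is a Borel (cylinder) set for each fixed node $u$, so that the indicators $\indi_{\{(j,k,l)\in\cI_T(\go)\}}$ are $\cA$-measurable. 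Two remarks. First, your estimates are uniform in the tree because $\cI_T(\go)\subseteq\cI_{\mathbf\Psi}$ only deletes terms; this is why neither $\gb$ nor the independence of $X$ and $T$ enters the regularity part, and it is worth saying so explicitly. Second, your proof (correctly) establishes the almost-sure statement, and that is all that can be true: for a deterministic $x\in\ell_s^p$ one only has $|x_{j,k}^l|\le 2^{js}\|x\|_{s,p}$, which in your block estimate yields the divergent bound $\|S_j\|_{L^\infty}\lesssim 2^{jd/p}\|x\|_{s,p}$, and a single spike per scale shows the series genuinely need not converge in $\rC(\bT^d)$ for every $\go\in\gO=\ell_s^p\times\mfT$. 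Hence $b_T$ has to be modified on a $\bP$-null set (harmless for strong measurability, since $(\gO,\cA,\bP)$ is complete); this is a looseness in the statement as quoted, not a gap in your argument.
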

	
	\begin{thm}{\cite[Theorem 2.11]{SS22}}\label{thm:random-tree-reg}
		Let $b_T$ be a $B_p^s$-random variable 
		with wavelet density $\gb=2^{\gg-d}$ as in Definition~\ref{def:randomtree-prior} with $\gg\in(-\infty, d]$.
		\begin{enumerate}[1.)]
			\item It holds that $b_T\in L^q(\gO, \bP; {B^t_q})$, and hence $b_T\in B_q^t$ $P$-a.s., 
			for all $t>0$ and $q\ge1$ such that $t < s+ \frac{d-\gg}{q} - \frac{d}{p}$.
			\item Let $s-\frac{d}{p}>0$ and $t\in(0,s-\frac{d}{p})$. Then there is a $\eps_p>0$ such that 
			\begin{equation*}
				\bE_\bP\left(\exp\left(\eps \|b\|_{\cC^t}^p\right)\right) < \infty,\quad \eps\in(0,\eps_p),
			\end{equation*}
			In particular, it holds $b_T\in L^q(\gO, \bP; \cC^t)$ for any $q\ge1$.
			\item Let $q\ge 1$ and $s-\frac{d}{p}-\frac{\min(\gg,0)}{q}>0$. 
			For any $t\in(0,s-\frac{d}{p}-\frac{\min(\gg,0)}{q})$ it holds $b_T\in L^q(\gO, \bP; \cC^t)$. 
		\end{enumerate}   
	\end{thm}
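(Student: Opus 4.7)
My plan is to reduce all three claims to explicit sums over the wavelet index set by combining three ingredients: the wavelet representation of $b_T$ has coefficients $\eta_j X_{j,k}^l\indi_{(j,k,l)\in\cI_T}$; the sequence $X$ is independent of the tree $T$ under $\bP=\bQ_p\otimes\bQ_T$; and the counting bound $\sum_{(k,l)\text{ at level }j}\bP((j,k,l)\in\cI_T)\le|\cL_j|\cdot|K_j|\gb^j\le C\cdot 2^{j\gg}$ holds because $\gb=2^{\gg-d}$. For the first claim, substituting the coefficients into the Besov norm~\eqref{eq:besovnorm}, taking expectation, and factoring via independence yields
\begin{equation*}
\bE_\bP\|b_T\|_{B_q^t}^q = \sum_{(j,k,l)\in\mathbf{\cI_\Psi}} 2^{jq(t+d/2-d/q)}\,\eta_j^q\,\bE|X|^q\,\bP((j,k,l)\in\cI_T) \lesssim \sum_{j\geq 0} 2^{j[q(t-s+d/p)+\gg-d]},
\end{equation*}
using $\eta_j^q=2^{-jq(s+d/2-d/p)}$ together with the counting bound; the geometric series converges exactly when $t<s-d/p+(d-\gg)/q$.

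For the second claim, I would use the wavelet (Littlewood-Paley) characterisation $\cC^t\simeq B^t_{\infty,\infty}$ compatible with~\eqref{eq:besovnorminf} to obtain $\|b_T\|_{\cC^t}\le C\sup_{(j,k,l)\in\cI_T}2^{-j\alpha}|X_{j,k}^l|$ with $\alpha:=s-d/p-t>0$. A union bound combined with the $p$-exponential tail of the $X_{j,k}^l$ and the independence of $X$ and $T$ then yields
\begin{equation*}
\bP(\|b_T\|_{\cC^t}^p > y) \lesssim \sum_{j\geq 0} 2^{j\gg}\exp(-2^{jp\alpha}y/K)
\end{equation*}
for a constant $K=C^p\gk$. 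The key point is that once $y$ exceeds a threshold $y^*(\gg)>0$, every summand in $j$ is at most $1$ and the sum is controlled by a constant multiple of $e^{-y/K}$, so the tail of $\|b_T\|_{\cC^t}^p$ is genuinely exponential with rate $1/K$ independently of $\gg$. Applying the layer-cake identity $\bE e^{\eps Y}=1+\eps\int_0^\infty e^{\eps y}\bP(Y>y)dy$ then gives the desired exponential moment for every $\eps<\eps_p:=1/K$.

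The third claim in the supercritical case $\gg\geq 0$ (where $\min(\gg,0)=0$) is an immediate consequence of Claim 2 and the inequality $y^{q/p}\leq C_q e^{\eps y}$. For the sharper range in the subcritical case $\gg<0$, I would argue directly: $(\sup_j a_j)^q\leq\sum_j a_j^q$ gives $\bE\|b_T\|_{\cC^t}^q\lesssim\sum_j 2^{-jq\alpha}\bE M_j^q$ with $M_j:=\sup_{(k,l):(j,k,l)\in\cI_T}|X_{j,k}^l|$, and the elementary estimate $\bE M_j^q\leq \bE N_j\cdot\bE|X|^q\lesssim 2^{j\gg}$ makes the series converge for $t<s-d/p-\gg/q=s-d/p-\min(\gg,0)/q$. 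I expect the main obstacle to be the tail analysis for the supercritical regime in Claim 2: a naive integration of the double sum from $y=0$ forces the spurious restriction $\gg<p\alpha$, so one must exploit that the union bound is trivially bounded by $1$ on the small-$y$ interval $[0,y^*(\gg)]$ and only the large-$y$ asymptotic---dominated by the $j=0$ term---feeds the exponential moment integral.
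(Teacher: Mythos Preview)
The paper does not give a proof of this theorem: it is quoted as \cite[Theorem~2.11]{SS22} from the companion forward paper and used here as a black box, so there is nothing in the present manuscript to compare your argument against.

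Your reconstruction is nonetheless sound and follows the expected route. The only step worth tightening is in Claim~2: the assertion that ``every summand is at most $1$'' for $y\ge y^*(\gg)$ is true but is not by itself what yields $\sum_{j\ge 0} 2^{j\gg}e^{-2^{jp\alpha}y/K}\lesssim e^{-y/K}$. What you actually need---and what holds, because $2^{jp\alpha}-1$ grows exponentially in $j$ while $j\gg\ln 2$ grows only linearly---is that for all $y$ beyond a ($\gg$-dependent) threshold the $j$-th term is bounded by $e^{-y/K}\cdot 2^{-j}$, so the tail in $j$ is a convergent geometric series. With that small adjustment all three parts go through exactly as you describe; in particular your direct $\ell^q$-bound $(\sup_j a_j)^q\le\sum_j a_j^q$ combined with $\bE M_j^q\le \bE N_j\cdot\bE|X|^q\lesssim 2^{j\gg}$ delivers the full subcritical range in Claim~3, since the convergence condition $-q\alpha+\gg<0$ is precisely $t<s-d/p-\gg/q$.
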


	\subsection{Well-posedness and regularity of forward problem}
	
	Let $\cD\subset\bR^d$, $d\in\{1,2,3\}$ be a convex polygonal domain, 
	with the boundary $\partial \cD$ consisting of a finite number of line or plane segments.
	We assume furthermore that $\cD\subseteq \bT^d$. 
	Let $\varphi|_{\cD}$ denote the \emph{restriction} of any $\varphi\in\rS'(\bR^d)$ to $\cD$, which is in turn given by the element $\varphi|_{\cD}\in \rD'(\cD)$ such that 
	\begin{equation*}
		\dualpair{\rD'(\cD)}{\mathrm D(\cD)}{\varphi|_{\cD}}{v}
		=
		\dualpair{\rS'(\bR^d)}{\rS(\bR^d)}{\varphi}{v_0},
		\quad 
		v\in\mathrm D(\cD),
	\end{equation*}
	where $v_0\in\mathrm D(\bR^d)\subset \rS(\bR^d)$ denotes the zero-extension of any $v\in \mathrm D(\cD)$ (cf. \cite[Section 2]{TriebelFctSpcDom}). According to \cite[Theorem 1.29]{TriebelFctSpcDom} there exists for any $b\in B_p^s$ a unique, one-periodic extension ${\rm ext}^{per}(b):\bR^d\to \bR$, so that $b = {\rm ext}^{per}(b)|_\cD$, see also \cite[Section 4.2]{SS22} for further details.
	
	The restriction of $b_T$ given in Definition~\ref{def:randomtree-prior} to $\cD\subseteq\bT^d$ is thus given by
	\begin{equation}\label{eq:restrictedbesovprior}
		b_{T,\cD}(\go)
		:=
		(\mathrm{ext}^{per}b_T(\go))|_{\cD}.
	\end{equation}
	We call $b_{T,\cD}$ a \textit{$B_p^s(\cD)$-valued random variable}.
	\begin{rem}
		\label{rmk:Domain}
		Note that $b_{T,\cD}$ is not (necessarily) periodic if $\cD\subsetneq\bT^d$, but \emph{merely the restriction of a periodic function} from the torus $\bT^d$. 
		Assuming $\cD\subseteq\bT^d$ for the sake of brevity does not have any 
		substantial impact on the following results: In case that $\cD\not\subset\bT^d$ is a bounded domain, we could extend Definition~\eqref{def:randomtree-prior} from the torus $\bT^d$ to a sufficiently large (periodic) domain $[-L,L]^d$, with $L>1$ such that $\cD\subset[-L,L]^d$. We would then simply define $b_{T,\cD}$ as the restriction of a $L$-periodic function on this enlarged domain.   
	\end{rem}
	
	Now we set $a=\exp(b_{T,\cD})$ in~\eqref{eq:ellipticpde} to obtain the \emph{elliptic forward problem with Besov random tree prior} to find $u(\go):\cD\to \bR$ for given $\go\in\gO$ such that 
	\begin{equation}\label{eq:elliptic-pde-besov}
		\begin{alignedat}{2}
			-\nabla\cdot(\exp(b_{T,\cD}(\go))\nabla u(\go)) &= f\quad &&\text{in $\cD$}, \quad 
			u(\go) = 0 \quad &&\text{on $\partial\cD$}.
		\end{alignedat}
	\end{equation}
	
	\begin{thm}\cite[Theorem 3.9]{SS22}\label{thm:solution-regularity}
		Let $b_{T,\cD}$ be given in~\eqref{eq:restrictedbesovprior} for $p\in[1,\infty)$, $s>0$ and $\gb\in[0,1]$, so that $sp>d$.
		Furthermore, let $f\in V'$. Then the following assertions hold.
		
		\begin{enumerate}[1.)]
			\item 	 There exists almost surely a unique weak solution 
                                 $u(\go)\in V$ to~\eqref{eq:elliptic-pde-besov} and 
                                 $u:\gO\to V$ is strongly measurable.
			
			\item For sufficiently small $\gk>0$ in~\eqref{eq:p-exponential}, 
			there are constants $\ol q\in(1,\infty)$ and $C>0$ such that
			\begin{equation*}
				\|u\|_{L^q(\gO, \bP; V)}\le C \|f\|_{V'}  <\infty
				\quad
				\begin{cases}
					&\text{for $q\in[1,\ol q)$ if $p=1$, and} \\
					&\text{for any $q\in [1,\infty)$ if $p>1$}.
				\end{cases}
			\end{equation*}
			
			\item 
			Let $f\in H$, $r\in (0,s-\frac{d}{p})\cap(0,1]$, let $r_0\in(0,r)$ if $r<1$, and $r_0=1$ if $r=1$.
			For sufficiently small $\gk>0$ in~\eqref{eq:p-exponential},  
			there are constants $\ol q\in(1,\infty)$ and $C>0$ such that .	
			\begin{equation*}
				\|u\|_{L^q(\gO, \bP; H^{1+r_0}(\cD))} \le C \|f\|_{H} <\infty
				\quad
				\begin{cases}
					&\text{for $q\in[1,\ol q)$ if $p=1$ and} \\
					&\text{for any $q\in [1,\infty)$ if $p>1$}.
				\end{cases}
			\end{equation*}
		\end{enumerate}
		
	\end{thm}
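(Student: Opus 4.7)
The plan is to establish the three assertions by combining the pathwise (deterministic) elliptic theory with the probabilistic moment bounds for $b_T$ provided by Theorem~\ref{thm:random-tree-reg}.

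For assertion~1 (well-posedness and measurability), Proposition~\ref{prob:measurability} ensures that $b_T(\go)\in\rC(\bT^d)$ almost surely, so the restriction $b_{T,\cD}(\go)$ lies in $\rC(\ol\cD)$ and the diffusion coefficient $a(\go):=\exp(b_{T,\cD}(\go))$ is pathwise continuous and strictly positive. In particular $a_-(\go)>0$ pathwise. For every fixed $\go$, the Lax--Milgram theorem then yields a unique weak solution $u(\go)\in V$ with the pathwise a-priori bound
\begin{equation*}
\|u(\go)\|_V \;\le\; \frac{\|f\|_{V'}}{a_-(\go)} \;\le\; \|f\|_{V'}\exp\bigl(\|b_{T,\cD}(\go)\|_{L^\infty(\cD)}\bigr).
\end{equation*}
Strong measurability of $u:\gO\to V$ follows from the strong measurability of $a$ (inherited from that of $b_T$) together with continuous dependence of the Lax--Milgram solution on the coefficient.

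For assertion~2, raising the pathwise bound to the $q$-th power and taking expectations reduces matters to controlling $\bE_\bP\bigl[\exp(q\|b_{T,\cD}\|_{L^\infty(\cD)})\bigr]$. Since $\cD\subseteq\bT^d$ and $\cC^t\hookrightarrow L^\infty(\bT^d)$ for any $t>0$, this is bounded by $\bE_\bP\bigl[\exp(Cq\|b_T\|_{\cC^t})\bigr]$ for any $t\in(0,s-d/p)$. By item~2 of Theorem~\ref{thm:random-tree-reg}, for sufficiently small $\gk$ in~\eqref{eq:p-exponential} there exists $\eps_p>0$ such that $\bE_\bP[\exp(\eps\|b_T\|_{\cC^t}^p)]<\infty$ for all $\eps\in(0,\eps_p)$. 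When $p>1$, Young's inequality gives $Cqx \le \eps x^p + C_{\eps,q,p}$, so $\bE_\bP[\exp(Cq\|b_T\|_{\cC^t})]<\infty$ for every $q\ge 1$. When $p=1$, the same expectation is finite only when $Cq<\eps_p$, yielding a finite threshold $\ol q = \eps_p/C>1$.

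For assertion~3, writing~\eqref{eq:elliptic-pde-besov} in the form $-\Delta u = a^{-1}(f + \nabla a\cdot\nabla u)$, interpreted distributionally when $r<1$, and invoking standard elliptic regularity on convex polygonal domains produces a pathwise estimate
\begin{equation*}
\|u(\go)\|_{H^{1+r_0}(\cD)} \;\le\; C\,\Phi\bigl(a_-(\go)^{-1},\|a(\go)\|_{\rC^r}\bigr)\,\|f\|_H,
\end{equation*}
with $\Phi$ a polynomial. Since $a=\exp(b_{T,\cD})$, both $a_-^{-1}$ and $\|a\|_{\rC^r}$ are controlled by $\exp(C\|b_T\|_{\cC^r})$ up to polynomial corrections in $\|b_T\|_{\cC^r}$; $q$-integrability then follows exactly as in assertion~2 by invoking item~2 of Theorem~\ref{thm:random-tree-reg} with smoothness $t=r$. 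The main obstacle throughout is the absence of uniform ellipticity: both $a_-^{-1}$ and $\|a\|_{\rC^r}$ are random and unbounded in $\go$, so deterministic and stochastic factors cannot be separated in the bounds. The essential point is that the Fernique-type exponential-moment estimate from Theorem~\ref{thm:random-tree-reg} is strong enough to absorb these random factors, provided $\gk$ is chosen small enough; the restriction to a finite moment order $\ol q$ in the case $p=1$ is intrinsic and reflects the Laplace-type tails of the underlying wavelet coefficients.
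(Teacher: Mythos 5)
The paper does not prove this theorem itself -- it is imported verbatim from the companion paper \cite[Theorem 3.9]{SS22} -- but your argument reproduces exactly the standard route that proof takes: pathwise Lax--Milgram with $\|u(\go)\|_V\le a_-(\go)^{-1}\|f\|_{V'}$ and pathwise H\"older-coefficient elliptic regularity on convex domains, combined with the Fernique-type bound $\bE_\bP[\exp(\eps\|b_T\|_{\cC^t}^p)]<\infty$ from Theorem~\ref{thm:random-tree-reg}(2), absorbing the random factors via Young's inequality when $p>1$ and obtaining the finite threshold $\ol q$ when $p=1$. Your reading of the role of ``sufficiently small $\gk$'' (it only matters for $p=1$, where it must make $\eps_p$ large enough that $\ol q=\eps_p/C>1$) is consistent with the remark the paper makes immediately after the theorem, so I see no gap.
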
 
    We observe that the non-negative diffusion coefficient in the forward PDE
    \eqref{eq:elliptic-pde-besov} is \emph{not lower-bounded away from zero.}
	We also remark that the condition "for sufficiently small $\gk>0$" 
    in part 2.) and 3.) of Theorem~\ref{thm:solution-regularity} 
    only applies for $p=1$, 
	and ensures that $\exp(\|b_{T,\cD}\|_{L^\infty(\cD)})\in L^q(\gO)$, 
    or, respectively, $\exp(\|b_{T,\cD}\|_{\rC^r(\ol\cD)})\in L^q(\gO)$, 
    for some ($\kappa$-dependent) $q\ge 1$. 
	
	\subsection{Pathwise approximation of the forward problem}
    \label{sec:PthwAppr}

	To obtain a tractable approximation of $b_T$ in~\eqref{eq:randomtreeprior}, 
        we truncate the wavelet series expansion after $N\in\bN$ scales to obtain the $\emph{truncated random tree Besov prior}$ 
	\begin{equation}\label{eq:randomtreepriortrunc}
		b_{T,N}(\go):= \sum_{\substack{(j,k,l)\in \cI_T(\go) \\ j\le N}} \eta_jX_{j,k}^l(\go)\psi_{j,k}^l, \quad \go\in\gO.
	\end{equation}
	The corresponding diffusion problem in weak form with truncated coefficient 
    for fixed $\go\in\gO$ is to find  $u_N(\go)\in V$ such that for all $v\in V$
	\begin{equation}\label{eq:ellipticpdetrunc}
		\int_\cD a_N(\go)\nabla u_N(\go)\cdot\nabla v dx = \dualpair{V'}{V}{f}{v},
	\end{equation}
	where 
	\begin{equation}\label{eq:diffusioncoefftrunc}
		a_N:\gO\to L^\infty(\cD), \quad \go\mapsto \exp(b_{T,N}(\go)|_{\cD}).
	\end{equation}
	The solution $u_N:\gO\to V$ to Problem~\eqref{eq:ellipticpdetrunc} 
	with truncated coefficient is still not fully tractable, 
	as it takes values in the infinite-dimensional Hilbert space $V$. 
	Thus, we consider Galerkin-finite element approximations of $u_N$ for a fixed truncation index $N$ in the remainder of this section. 
	
	As a first step, we discretize the convex domain 
	$\cD\subset \bT^{d}$, $d\in\{1,2,3\}$ by a 
	sequence of simplices (intervals/triangles/tetrahedra) 
	or parallelotopes (intervals/ parallelograms/parallelepipeds), 
	denoted by $(\cK_h)_{h\in \mfH}$. 
	The refinement parameter $h>0$ takes values in a countable index set $\mfH \subset (0,\infty)$
	and corresponds to the longest edge of a simplex/parallelotope $K\in\cK_h$. 
	We impose the following assumptions on $(\cK_h)_{h\in \mfH}$ 
	to obtain a sequence of "well-behaved" triangulations.
	\begin{assumption}\label{ass:triangulation}
		The sequence $(\cK_h)_{h\in \mfH}$ satisfies:
		\begin{enumerate}
			\item \emph{Admissibility:} 
			For each $h\in \mfH$, $\cK_h$ consists of open, 
			non-empty simplices/parallelotopes $K$ 
			such that 
			\begin{itemize}
				\item $\ol \cD = \bigcup_{K\in\cK_h} \ol K$, 
				\item $K_1\cap K_2=\emptyset$ for any two $K_1, K_2\in \cK_h$ such that $K_1\neq K_2$, and 
				\item the intersection $\ol K_1\cap \ol K_2$ for $K_1\neq K_2$ is 
				either empty, a common edge, a common vertex, 
				or (in space dimension $d=3$) a common face of $K_1$ and $K_2$.
			\end{itemize}
			\item \emph{Shape-regularity:} 
			Let $\rho_{K,in}$ and $\rho_{K,out}$ denote the radius of the inner and outer circle, respectively, for a given $K\in\cK_h$.  
			There is a constant $\rho > 0$ such that 
			\begin{equation*}
				\rho : = \sup_{h\in \mfH} \sup_{K\in\cK_h} \frac{\rho_{K,out}}{\rho_{K,in}} < \infty.
			\end{equation*}
		\end{enumerate}
	\end{assumption} 
	
	Based on a given tesselation $\cK_h$, we define the space of piecewise (multi-)linear finite elements 
	\begin{equation*}
		V_h:=
		\begin{cases}
			\{v\in V|\; \text{$v|_T$ is linear for all $K\in\cK_h$}\},
			&\quad\text{if $\cK_h$ consists of simplices}, \\
			\{v\in V|\; \text{$v|_T$ is $d$-linear for all $K\in\cK_h$}\},
			&\quad\text{if $\cK_h$ consists of parallelotopes}.
		\end{cases}
	\end{equation*}
	Clearly, $V_h\subset V$ is a finite-dimensional space and we define $n_h:=\dim(V_h)\in \bN$.
	This yields for fixed $\go\in\gO$ the \emph{fully discrete problem} to find $u_{N,h}(\go)\in V_h$ such that for all $v_h\in V_h$
	\begin{equation}\label{eq:ellipticpdediscrete}
		\int_\cD a_N(\go)\nabla u_{N,h}(\go)\cdot\nabla v_h dx = \dualpair{V'}{V}{f}{v_h}.
	\end{equation}
	The combined truncation and FE-approximation error is bounded by the next result.

	\begin{thm}\cite[Theorems 4.4., 4.7 and 4.8]{SS22}\label{thm:forward-error}
		Let $(\cK_h)_{h\in \mfH}$ be a sequence of triangulations satisfying Assumption~\ref{ass:triangulation}, 
		and let $u$, $u_N$ and $u_{N,h}$ be the pathwise weak solutions to 
		\eqref{eq:elliptic-pde-besov},~\eqref{eq:randomtreepriortrunc} and \eqref{eq:ellipticpdediscrete} for given $N\in\bN$ and $h\in\mfH$.
		Furthermore, let $p\in[1,\infty)$ and $s>0$ such that $sp>d$.
		
		For any $f\in H$, sufficiently small $\gk>0$ in~\eqref{eq:p-exponential}, 
		any $r\in (0,s-\frac{d}{p})\cap (0,1]$ and $t\in(0,s-\frac{d}{p})$,
		there are constants $\ol q\in(1,\infty)$ and $C>0$ such that for any $N\in\bN$ and $h\in\mfH$ there holds
		\begin{align*}
			\|u-u_N\|_{L^q(\gO, \bP; V)}&\le C 2^{-Nt}
			\quad
			\begin{cases}
				&\text{for $q\in[1,\ol q)$ if $p=1$,} \\
				&\text{for any $q\in [1,\infty)$ if $p>1$,}
			\end{cases}
			\\
			\|u_N-u_{N,h}\|_{L^q(\gO, \bP; V)}&\le C h^r 
			\,\qquad
			\begin{cases}
				&\text{for $q\in[1,\ol q)$ if $p=1$,} \\
				&\text{for any $q\in [1,\infty)$ if $p>1$,}
			\end{cases}
			\\
			\|u_N-u_{N,h}\|_{L^q(\gO, \bP; H)}&\le C h^{2r} 
			\quad\;\;\;
			\begin{cases}
				&\text{for $q\in[1,\ol q)$ if $p=1$,} \\
				&\text{for any $q\in [1,\infty)$ if $p>1$.}
			\end{cases}
	\end{align*}		
	\end{thm}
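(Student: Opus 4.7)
The plan is to establish the three bounds in order, then collect the $L^q(\gO, \bP)$-integrability by combining pathwise estimates with the exponential moment bounds for $\|b_{T,\cD}\|_{\cC^t}$ from Theorem~\ref{thm:random-tree-reg} and the solution regularity from Theorem~\ref{thm:solution-regularity}. Throughout I would denote $a_-(\go) := \essinf_\cD a(\go) = \exp(-\|b_{T,\cD}(\go)\|_{L^\infty(\cD)})$ and use the embedding $\cC^t \hookrightarrow \rC(\ol\cD) \hookrightarrow L^\infty(\cD)$ to absorb $1/a_-$ and $\|a\|_{L^\infty}$ into $\exp(C\|b_{T,\cD}\|_{\cC^t})$, whose $L^q$-moments are controlled via the Fernique-type estimate of Theorem~\ref{thm:random-tree-reg}(2) (and for $p=1$ only for a restricted range of $q$, which yields the constant $\ol q$).

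\emph{Truncation bound.} Subtracting the weak formulations \eqref{eq:elliptic-pde-besov} and \eqref{eq:ellipticpdetrunc} and testing with $u(\go) - u_N(\go)$ gives the pathwise estimate
\begin{equation*}
\|u(\go) - u_N(\go)\|_V \le \frac{\|a(\go) - a_N(\go)\|_{L^\infty(\cD)}}{a_-(\go)}\, \|u_N(\go)\|_V.
\end{equation*}
Since $|e^{b} - e^{b_N}| \le e^{\max(\|b\|_{L^\infty},\|b_N\|_{L^\infty})} |b - b_N|$, the ratio above is controlled by $\exp(C\|b_{T,\cD}\|_{\cC^t})\,\|b_{T,\cD} - b_{T,N}|_\cD\|_{L^\infty(\cD)}$. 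For the tail, the characterisation $\cC^t = B^t_{\infty,\infty}$ together with the locally finite overlap of the wavelet supports in \eqref{eq:psi_scaled} yields $\|b_{T,\cD} - b_{T,N}|_\cD\|_{L^\infty(\cD)} \le C\, 2^{-Nt}\|b_{T,\cD}\|_{\cC^t}$ for any $t \in (0, s-d/p)$. Applying Hölder's inequality in $\bP$ with exponents $(2q,2q)$ and using the exponential moment bound together with the $L^{2q}$-regularity of $u_N$ (uniform in $N$, inherited from Theorem~\ref{thm:solution-regularity}(2) applied to the truncated coefficient) gives the stated rate $2^{-Nt}$.

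\emph{Finite element bound in $V$-norm.} A pathwise Céa lemma yields
\begin{equation*}
\|u_N(\go) - u_{N,h}(\go)\|_V \le \frac{\|a_N(\go)\|_{L^\infty(\cD)}}{a_{N,-}(\go)} \inf_{v_h \in V_h}\|u_N(\go) - v_h\|_V.
\end{equation*}
For the approximation error I would invoke pathwise $H^{1+r}$-regularity of $u_N$ on the convex polygonal domain $\cD$ (which is Theorem~\ref{thm:solution-regularity}(3) applied at the truncated coefficient, yielding a bound of the form $\|u_N\|_{H^{1+r}} \le C(\|a_N\|_{\rC^r(\ol\cD)}, a_{N,-}^{-1})\|f\|_H$) together with the standard nodal-interpolation estimate $\inf_{v_h}\|u_N - v_h\|_V \le C h^r \|u_N\|_{H^{1+r}(\cD)}$ under Assumption~\ref{ass:triangulation}. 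Moments of $\|a_N\|_{\rC^r}$ and $a_{N,-}^{-1}$ are again controlled uniformly in $N$ by exponential moments of $\|b_{T,\cD}\|_{\cC^r}$; Hölder's inequality then delivers the $L^q(\gO,\bP;V)$-bound with rate $h^r$.

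\emph{Finite element bound in $H$-norm.} I would use an Aubin--Nitsche duality argument pathwise: for each $\go$, let $z(\go) \in V$ solve the adjoint problem $-\nabla\cdot(a_N(\go)\nabla z) = u_N(\go) - u_{N,h}(\go)$ in $\cD$, which by elliptic regularity lies in $H^{1+r}(\cD)$ with $\|z\|_{H^{1+r}} \le C(\go)\|u_N - u_{N,h}\|_H$. Galerkin orthogonality gives
\begin{equation*}
\|u_N(\go) - u_{N,h}(\go)\|_H^2 = \int_\cD a_N(\go) \nabla(u_N - u_{N,h})\cdot\nabla(z - I_h z)\,dx,
\end{equation*}
which is bounded by $\|a_N\|_{L^\infty} \|u_N - u_{N,h}\|_V\, C h^r\|z\|_{H^{1+r}}$. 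Combined with the $V$-norm bound of rate $h^r$ this produces the $h^{2r}$ rate pathwise, and integration with Hölder in $\bP$ yields the third claim.

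\emph{Main obstacle.} The crux throughout is the \emph{lack of uniform ellipticity}: the random quantities $a_-^{-1}(\go)$, $\|a\|_{L^\infty}$ and the elliptic-regularity constants are only finite $\bP$-a.s., not uniformly bounded. The key is to distribute Hölder exponents so that each occurrence of such an unbounded factor is paired with an exponential moment controlled by Theorem~\ref{thm:random-tree-reg}(2), whose validity requires $\gk$ in \eqref{eq:p-exponential} to be sufficiently small in the $p=1$ case and restricts the admissible $q$ to $[1,\ol q)$; for $p>1$ exponential moments of all orders are available. A secondary technical point is verifying that the truncated field $b_{T,N}$ inherits the Hölder--Zygmund moment bounds uniformly in $N$, which follows from the wavelet characterisation of $\cC^t$ and the fact that truncation to scales $\le N$ only decreases the relevant norms.
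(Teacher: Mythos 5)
This theorem is quoted verbatim from the companion paper \cite[Theorems 4.4, 4.7 and 4.8]{SS22} and is not proved in the present article, so there is no in-paper proof to compare against. Your sketch --- the pathwise perturbation estimate for the truncation error, C\'ea's lemma combined with pathwise $H^{1+r}$ regularity for the energy-norm FE error, an Aubin--Nitsche duality argument for the $H$-norm bound, and H\"older's inequality in $\bP$ paired with the Fernique-type exponential moments of $\|b_{T,\cD}\|_{\cC^t}$ to absorb the non-uniformly-elliptic random constants (which is exactly where the restriction $q<\ol q$ for $p=1$ originates) --- is the standard lognormal-type argument that the cited results of \cite{SS22} carry out, and it is correct.
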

	
	\section{Inverse Problem with Besov Random Tree Prior}
	\label{sec:BIP-random-tree}
	
	Consider again the Bayesian inverse problem setting from Section~\ref{sec:BIP}, 
	where we assume that $\log(a)$ in~\eqref{eq:ellipticpde} is given by a 
	Besov random tree prior, i.e. $a=\exp(b_{T,\cD})$. 
	We verify Assumption~\ref{ass:potential} in this setting to 
	ensure that Proposition~\ref{prop:lipschitz-hell} and Corollary~\ref{cor:lower-bound-Z} are valid. 
	
	\begin{lem}\label{lem:local-boundedness}
		Let the assumptions of Theorem~\ref{thm:solution-regularity} hold with $q\ge2$, let $\log(a)$ in~\eqref{eq:ellipticpde} be given by a Besov random tree prior as $a=\exp(b_{T,\cD})$, and let $\rho$ be given in~\eqref{eq:Gauss-noise}. Then, Assumption~\ref{ass:potential} holds for $\Phi$ and $\rho$.
	\end{lem}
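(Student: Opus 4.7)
The plan is to verify the two items of Assumption~\ref{ass:potential} directly from the explicit quadratic form of $\Phi$ in~\eqref{eq:potential}, exploiting only the integrability of the parameter-to-observation map $\cG$ that is handed to us by Theorem~\ref{thm:solution-regularity}. Since $\cO \in ((H_0^{\theta_\cO}(\cD))')^k$ with $\theta_\cO \in (1/2, 1]$ and $V = H_0^1(\cD) \hookrightarrow H_0^{\theta_\cO}(\cD)$, there is a constant $C_\cO > 0$ such that $\|\cG(\go)\|_2 \le C_\cO \|u(\go)\|_V$ pointwise, so the hypothesis $q \ge 2$ in Theorem~\ref{thm:solution-regularity} delivers $\cG \in L^q(\gO, \bP; \bR^k)$ for some $q \ge 2$; in particular $\|\cG(\cdot)\|_2 \in L^2(\gO, \bP)$.

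For item~\eqref{item:ass1}, I will use Markov's inequality on $\|\cG\|_2$ to choose $M > 0$ so that the set $\gO_\gl := \{\go \in \gO : \|\cG(\go)\|_2 \le M\}$ has $\bP(\gO_\gl) > 0$ (note $M$ can be chosen independently of $\gl$). On this set, for any $\gd \in B_\gl(0)$, the triangle inequality and the operator norm bound for $\gS^{-1}$ give
\begin{equation*}
(\gd - \cG(\go))^\top \gS^{-1} (\gd - \cG(\go)) \le \|\gS^{-1}\|_2 (\gl + M)^2,
\end{equation*}
whence setting $\gk_1(\gl) := \tfrac{k}{2}\log(2\pi\det\gS) + \tfrac{1}{2}\|\gS^{-1}\|_2(\gl + M)^2$ yields the required uniform upper bound on $\Phi(\go;\gd)$.

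For item~\eqref{item:ass2}, I will expand the quadratic form via the polarization-type identity
\begin{equation*}
(\gd - \cG(\go))^\top \gS^{-1}(\gd - \cG(\go)) - (\gd' - \cG(\go))^\top \gS^{-1}(\gd' - \cG(\go)) = (\gd - \gd')^\top \gS^{-1}(\gd + \gd' - 2\cG(\go)),
\end{equation*}
and bound via Cauchy--Schwarz in $\bR^k$ to obtain
\begin{equation*}
|\Phi(\go;\gd) - \Phi(\go;\gd')| \le \|\gS^{-1}\|_2 \, \|\gd-\gd'\|_2 \bigl(\gl + \|\cG(\go)\|_2\bigr)
\end{equation*}
for all $\gd,\gd' \in B_\gl(0)$. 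Defining $\gk_2(\gl,\go) := \|\gS^{-1}\|_2(\gl + \|\cG(\go)\|_2)$, the $L^2(\gO,\bP)$-integrability of $\gk_2(\gl,\cdot)$ follows from $\|\cG(\cdot)\|_2 \in L^2(\gO,\bP)$ established above.

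Neither step presents a genuine obstacle; the only subtlety is ensuring that $\cG$ actually admits a second moment, which is precisely what the assumption $q \ge 2$ in Theorem~\ref{thm:solution-regularity} is designed to provide (and which is non-trivial because the log-diffusion $b_{T,\cD}$ lacks uniform ellipticity, so the $L^q$-bounds in Theorem~\ref{thm:solution-regularity} rest on the small-$\kappa$ assumption when $p=1$). Both constants $\gk_1(\gl)$ and $\gk_2(\gl,\cdot)$ are monotone in $\gl$, so they transfer to any larger ball.
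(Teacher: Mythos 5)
Your proposal is correct and follows essentially the same route as the paper: the paper also extracts a positive-probability set on which the forward solution (equivalently, $\cG$) is bounded to verify Item~\eqref{item:ass1}, and uses the same polarization/expansion of the quadratic form together with $u\in L^2(\gO,\bP;V)$ to produce the square-integrable Lipschitz constant $\gk_2(\gl,\cdot)$ for Item~\eqref{item:ass2}. The only cosmetic difference is that you phrase the bounds in terms of $\|\cG(\go)\|_2$ while the paper writes them via $\|\cO\|_{((H_0^{\gt_\cO})')^k}\|u(\go)\|_V$.
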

	
	\begin{proof}
		By Theorem~\ref{thm:solution-regularity} we obtain that $u\in L^q(\gO, \bP; V)$. 
		This implies that there is a constant $C_u>0$ and a set $\gO_u\in\cA$ with $\bP(\gO_u)>0$, such that $\|u(\go)\|_V\le C_u$ for all $\go\in \gO_u$.
		Now let $\gl>0$ be fixed and define $\chi:=\gl+\|\cO\|_{((H_0^{\gt_\cO})')^k}C_u$.
		For all $\go\in \gO_u$ and $\gd\in B_\gl(0)$ we then obtain
		\begin{align*}
			\Phi(\go; \gd)
			\le
			|\log(\rho(\gd-[\cO\circ u](\go)))|
			\le 
			\sup_{x\in B_\chi(0)} |\log(\rho(x))|
			\le
			\frac{k}{2}|\log(2\pi\det(\gS))|
			+\frac{1}{2} \|\gS^{-1}\|_2\chi^2
			<\infty.
		\end{align*}
		Hence, $\Phi$ satisfies Item~\ref{item:ass1} of Assumption~\ref{ass:potential} with $\gO_\gl:=\gO_u$ and
		\begin{equation*}
			\gk_1(\gl):=\frac{k}{2}|\log(2\pi\det(\gS))|
			+\frac{1}{2} \|\gS^{-1}\|_2(\gl+\|\cO\|_{((H_0^{\gt_\cO})')^k}C_u)^2.
		\end{equation*}
		To show Item~\ref{item:ass2}, we fix $\go\in\gO$ and let $\gd, \gd'\in B_\gl(0)$.
		By~\eqref{eq:Gauss-noise}, we obtain
		\begin{align*}
			|\Phi(\go; \gd)-\Phi(\go; \gd')|
			&= 
			\left|
			\log\left(
			\frac{\rho(\gd-[\cO\circ u](\go))}{\rho(\gd'-[\cO\circ u](\go))}
			\right)
			\right|\\
			&=
			\frac{1}{2}\left|
			-\gd^\top\gS^{-1}\gd + (\gd')^\top\gS^{-1}(\gd')
			+2[\cO\circ u](\go)^\top\gS^{-1}(\gd-\gd')
			\right| \\
			&\le
			\frac{1}{2}\left|
			-\gd^\top\gS^{-1}(\gd-\gd') + (\gd')^\top\gS^{-1}(\gd'-\gd)\right|
			+
			\left|[\cO\circ u](\go)^\top\gS^{-1}(\gd'-\gd)
			\right| \\
			&\le \left(\max(\|\gd\|_2, \|\gd'\|_2)+\|\cO\|_{((H_0^{\gt_\cO})')^k}\|u(\go)\|_V\right)
			\|\gS^{-1}\|_2 \|\gd-\gd'\|_2 \\
			&\le \left(\gl+\|\cO\|_{((H_0^{\gt_\cO})')^k}\|u(\go)\|_V\right)
			\|\gS^{-1}\|_2 	\|\gd-\gd'\|_2.	
		\end{align*}
		Since $u\in L^2(\gO,\bP;V)$ by Theorem~\ref{thm:solution-regularity}, Item~\ref{item:ass2} of 
		Assumption~\ref{ass:potential} holds with 
		\begin{equation*}
			\gk_2(\gl,\go):=
			\left(\gl+\|\cO\|_{((H_0^{\gt_\cO})')^k}\|u(\go)\|_V\right)
			\|\gS^{-1}\|_2.
		\end{equation*}
	\end{proof}
	
	\begin{rem}
		Let $\gl_{\rm min}>0$ denote the smallest eigenvalue of $\gS$.
		From the proof of Lemma~\ref{lem:local-boundedness} it is apparent that $\gk_1(\gl)=\cO(\gl_{\rm min}^{-1})$, $\gk_2(\gl,\go)=\cO(\gl_{\rm min}^{-1})$ for fixed $\go$ and $\gl$.
		Hence, the bounds in 
		Assumption~\ref{ass:potential} deteriorate in the "small noise-limit" when $\gl_{\rm min}\to 0$. 
		Consequently, $C(\gl)=\cO(\gl_{\rm min}^{-1})$ for fixed $\gl>0$ in Proposition~\ref{prop:lipschitz-hell}, and $c(\gl)=\cO(\gl_{\rm min}^{\frac{k}{2}})$ in Corollary~\ref{cor:lower-bound-Z} with the choice of $\gk_1$ as in the proof of Lemma~\ref{lem:local-boundedness}.
		
	\end{rem}
	
	The (exact) posterior $\bP_\gd$ from~\eqref{eq:RN_derivative} is in general out of reach, 
	as only biased samples $u_{N,h}\approx u$ as in~\eqref{eq:ellipticpdediscrete} of the forward problem are available. 
	Therefore, we consider the \emph{approximated posterior}   
	\begin{equation}\label{eq:discrete_posterior}
		d\bP_{\gd, N, h}(\go) := \frac{\exp\left(-\Phi_{N,h}(\go; \gd)\right)d\bP(\go)}
		{Z_{N,h}(\gd)}, 
	\end{equation}
	with discrete Bayesian potential and normalizing constant given by 
	\begin{equation*}
		\Phi_{N,h}(\go; \gd)
		:=
		-\log\left(\rho(\gd-[\cO\circ u_{N,h}](\go))\right),
		\quad\text{and}\quad 
		Z_{N,h}(\gd):=\int_\gO\exp\left(-\Phi_{N,h}(\go; \gd)\right)d\bP(\go)>0.
	\end{equation*}
	
	\begin{prop}\label{prop:posterior_approx}
		Let the assumptions of Theorem~\ref{thm:forward-error} hold such that $\ol q\ge4$ in case that $p=1$. Then, for any $\gl>0$ there is a $C(\gl)>0$, independent of $N$ and $h$, such that 
		\begin{equation*}
			d_{\rm Hell}(\bP_\gd, \bP_{\gd, N, h})\le C(\gl)(2^{-Nt}+ h^{(2-\gt_\cO)r})
			\quad\text{for all $\gd\in B_\gl(0)$.}
		\end{equation*}
	\end{prop}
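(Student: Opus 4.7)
The plan is to follow the classical posterior-perturbation scheme of \cite{stuart2010inverse, hoang2012bayesian}: reduce the squared Hellinger distance to an $L^2(\gO,\bP)$-norm of $\Phi - \Phi_{N,h}$, and then invoke Theorem~\ref{thm:forward-error}. The exponent $(2-\gt_\cO)r$ arises from Hilbert-space interpolation between the $V$- and $H$-rates of that theorem, using that the observation functional takes values in $((H_0^{\gt_\cO})')^k$.

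Starting from
\begin{equation*}
2 d_{\rm Hell}^2(\bP_\gd, \bP_{\gd,N,h}) = \int_\gO \left(\frac{e^{-\Phi(\go;\gd)/2}}{\sqrt{Z(\gd)}} - \frac{e^{-\Phi_{N,h}(\go;\gd)/2}}{\sqrt{Z_{N,h}(\gd)}}\right)^2 d\bP(\go),
\end{equation*}
I would add and subtract $e^{-\Phi_{N,h}/2}/\sqrt{Z}$ to split it into a main term $Z^{-1}\int_\gO (e^{-\Phi/2} - e^{-\Phi_{N,h}/2})^2 d\bP$ and a normalizing-constant term involving $|Z^{-1/2} - Z_{N,h}^{-1/2}|^2$. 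The elementary estimate $|e^{-a/2} - e^{-b/2}| \le \tfrac{1}{2}\max(e^{-a/2}, e^{-b/2})|a-b|$, combined with the pointwise lower bound $\Phi, \Phi_{N,h} \ge \tfrac{k}{2}\log(2\pi\det\gS)$, reduces both terms to bounding $\Phi - \Phi_{N,h}$ in $L^2(\gO,\bP)$. Applying the identity $a^\top\gS^{-1}a - b^\top\gS^{-1}b = (a-b)^\top\gS^{-1}(a+b)$ with $a = \gd - \cO u(\go)$ and $b = \gd - \cO u_{N,h}(\go)$ controls this difference pointwise by
\begin{equation*}
\tfrac{1}{2}\|\gS^{-1}\|_2\|\cO(u(\go) - u_{N,h}(\go))\|_2 \, (2\|\gd\|_2 + \|\cO u(\go)\|_2 + \|\cO u_{N,h}(\go)\|_2).
\end{equation*}

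Next, bounding $\|\cO\cdot\|_2 \le \|\cO\|_{((H_0^{\gt_\cO})')^k}\|\cdot\|_{H_0^{\gt_\cO}}$ and splitting $u - u_{N,h} = (u-u_N) + (u_N - u_{N,h})$, the truncation part is controlled by $\|u - u_N\|_V$ (using the embedding $V \hookrightarrow H_0^{\gt_\cO}$ for $\gt_\cO \le 1$), which Theorem~\ref{thm:forward-error} bounds by $C\cdot 2^{-Nt}$ in $L^q(\gO,\bP)$. For the FE part I would use the interpolation inequality
\begin{equation*}
\|u_N(\go) - u_{N,h}(\go)\|_{H_0^{\gt_\cO}} \le \|u_N(\go) - u_{N,h}(\go)\|_H^{1-\gt_\cO}\|u_N(\go) - u_{N,h}(\go)\|_V^{\gt_\cO},
\end{equation*}
raise to the $q$-th power, take expectations, and apply Hölder's inequality together with Theorem~\ref{thm:forward-error} to recover the rate $h^{r(2-\gt_\cO)}$ in $L^q(\gO,\bP)$, via the arithmetic $2r(1-\gt_\cO) + r\gt_\cO = r(2-\gt_\cO)$. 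Multiplying by the amplifying factor $(\|\gd\|_2 + \|\cO u\|_2 + \|\cO u_{N,h}\|_2)$ and applying Cauchy--Schwarz in $\gO$ then requires fourth moments in $V$ for both $u$ and $u_{N,h}$, which is precisely why $\ol q \ge 4$ is assumed (the case $p>1$ being automatic by Theorem~\ref{thm:solution-regularity}). The $Z$-term is handled in the same way, via $|Z - Z_{N,h}| \le \int_\gO |e^{-\Phi} - e^{-\Phi_{N,h}}|d\bP$ together with a uniform lower bound $Z_{N,h}\ge c(\gl)>0$ obtained by re-running Lemma~\ref{lem:local-boundedness} and Corollary~\ref{cor:lower-bound-Z} with $u$ replaced by $u_{N,h}$.

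The main obstacle will be the bookkeeping required to keep every constant uniform in $N$ and $h$ --- in particular the uniform lower bound on $Z_{N,h}$ and the uniform $L^4(\gO,\bP; V)$-bounds on $u_{N,h}$. Both follow from Theorem~\ref{thm:forward-error} together with the triangle-inequality estimate $\|u_{N,h}\|_V \le \|u\|_V + \|u - u_{N,h}\|_V$. Once these uniformities are established, the advertised rate $2^{-Nt} + h^{(2-\gt_\cO)r}$ with $(N,h)$-independent constant $C(\gl)$ drops out of the Gaussian algebra together with the interpolation identity above.
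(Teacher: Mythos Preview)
Your proposal is correct and follows essentially the same route as the paper: split the squared Hellinger distance into a ``potential'' term and a ``normalizing constant'' term, reduce both to an $L^2(\gO,\bP)$-bound on the forward error measured in $H^{\gt_\cO}$, and extract the rate $h^{(2-\gt_\cO)r}$ via the interpolation $\|v\|_{H^{\gt_\cO}}\le\|v\|_H^{1-\gt_\cO}\|v\|_V^{\gt_\cO}$ combined with Theorem~\ref{thm:forward-error}. The paper indeed records the uniform lower bound on $Z_{N,h}(\gd)$ as a separate lemma (Lemma~\ref{lem:lower-bound-Z-disc}), proved exactly as you outline via Theorem~\ref{thm:forward-error} and Markov's inequality. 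The only cosmetic difference is that the paper bounds $|e^{-\Phi/2}-e^{-\Phi_{N,h}/2}|$ by exploiting directly that $\rho^{1/2}$ is globally Lipschitz on $\bR^k$, thereby absorbing your ``amplifying factor'' $(2\|\gd\|_2+\|\cO u\|_2+\|\cO u_{N,h}\|_2)$ into a deterministic constant and landing immediately on $\|u-u_{N,h}\|_{L^2(\gO;H^{\gt_\cO})}$; your route via the quadratic-form identity and Cauchy--Schwarz is equally valid and explains transparently why $\ol q\ge 4$ enters.
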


	We need a uniform lower bound on $Z_{N,h}(\gd)$ to prove Proposition~\ref{prop:posterior_approx}:
	\begin{lem}\label{lem:lower-bound-Z-disc}
		Let the assumptions of Theorem~\ref{thm:forward-error} hold. Then, for any $\gl>0$ there is a constant $c(\gl)>0$, independent of $N$ and $h$, such that 
		\begin{equation}
			Z_{N,h}(\gd)\ge c(\gl)>0\quad
			\text{for all $\gd \in B_\gl(0)$.}
		\end{equation}
	\end{lem}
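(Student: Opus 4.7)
The plan is to mimic the argument for Corollary~\ref{cor:lower-bound-Z}, but with the continuous solution $u$ replaced by the Galerkin approximation $u_{N,h}$, producing a set of positive $\bP$-measure on which $\|u_{N,h}\|_V$ is bounded \emph{uniformly in $N$ and $h$}.

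First I would obtain a uniform $L^q$-bound on the discrete solution. By the triangle inequality and Theorem~\ref{thm:forward-error},
\begin{equation*}
\|u_{N,h}\|_{L^q(\gO,\bP;V)} \le \|u\|_{L^q(\gO,\bP;V)} + \|u - u_N\|_{L^q(\gO,\bP;V)} + \|u_N - u_{N,h}\|_{L^q(\gO,\bP;V)} \le C_*,
\end{equation*}
for a constant $C_*$ independent of $N\in\bN$ and $h\in\mfH$, provided $q$ lies in the admissible range of Theorem~\ref{thm:solution-regularity} and Theorem~\ref{thm:forward-error} (any $q\in[1,\infty)$ if $p>1$, and $q\in[1,\ol q)$ if $p=1$; we take $q\ge 2$).

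Next I would apply Markov's inequality. For any $M>0$,
\begin{equation*}
\bP\bigl(\{\go\in\gO:\, \|u_{N,h}(\go)\|_V > M\}\bigr) \le \frac{C_*^q}{M^q}.
\end{equation*}
Choosing $M=M(\gl)$ large enough (and independent of $N,h$) so that $C_*^q/M^q \le 1/2$, the set $\gO_{N,h}:=\{\|u_{N,h}\|_V\le M\}$ satisfies $\bP(\gO_{N,h})\ge 1/2$.

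Finally, on $\gO_{N,h}$ the same computation as in the proof of Lemma~\ref{lem:local-boundedness} (applied to $u_{N,h}$ instead of $u$) yields, for every $\gd\in B_\gl(0)$,
\begin{equation*}
\Phi_{N,h}(\go;\gd) \le \frac{k}{2}|\log(2\pi\det(\gS))| + \frac{1}{2}\|\gS^{-1}\|_2\bigl(\gl + \|\cO\|_{((H_0^{\gt_\cO})')^k} M\bigr)^2 =: \wt\gk_1(\gl).
\end{equation*}
Hence
\begin{equation*}
Z_{N,h}(\gd) = \int_\gO \exp(-\Phi_{N,h}(\go;\gd))\,d\bP(\go) \ge \int_{\gO_{N,h}} \exp(-\wt\gk_1(\gl))\,d\bP(\go) \ge \tfrac{1}{2}\exp(-\wt\gk_1(\gl)) =: c(\gl),
\end{equation*}
which is the claimed uniform lower bound. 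The main (and only) conceptual point is that the uniform $L^q$-bound from Theorem~\ref{thm:forward-error} is strong enough to turn the pointwise-in-$\go$ argument of Corollary~\ref{cor:lower-bound-Z} into a bound that is uniform in both discretization parameters; no quantitative use of the Hellinger distance or of the forward rate is needed.
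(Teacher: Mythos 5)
Your proof is correct and follows essentially the same route as the paper's: a uniform moment bound on $u_{N,h}$ obtained from Theorem~\ref{thm:forward-error} plus the triangle inequality, Markov's inequality to produce a set of $\bP$-measure at least $\tfrac{1}{2}$ (uniformly in $N$ and $h$) on which the potential $\Phi_{N,h}$ is uniformly bounded, and integration over that set. The only cosmetic difference is that the paper works with the $L^1(\gO,\bP;V)$ norm rather than $L^q$ with $q\ge 2$, which is slightly safer since for $p=1$ the guaranteed $\ol q\in(1,\infty)$ need not reach $2$; the argument is otherwise identical and $q=1$ suffices.
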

	
	\begin{proof}
		By Theorem~\ref{thm:forward-error}, there is a constant $\widetilde C>0$ such that for all $N\in\bN$ and $h\in\mfH$ there holds 
		\begin{equation}\label{eq:L1error}
			\|u-u_{N,h}\|_{L^1(\gO, \bP; V)}\le C(2^{-Nt}+h^r) \le \widetilde C<\infty.
		\end{equation}
		Since $u\in L^1(\gO, \bP; V)$, this implies by the reverse triangle inequality that 
		\begin{equation}\label{eq:L1bound}
			\|u_{N,h}\|_{L^1(\gO, \bP; V)}\le \widetilde C+\|u\|_{L^1(\gO, \bP; V)}<\infty.
		\end{equation}
		Now define $\widetilde C_u:= 2(\widetilde C+\|u\|_{L^1(\gO, \bP; V)})>0$, and the set 
		\begin{equation*}
			\widetilde \gO_u:=\{\go\in\gO\,\big| \|u_{N,h}\|_{V}\le \widetilde C_u\}\,\in \cA.
		\end{equation*}
		The set $\widetilde \gO_u$ depends in general on $N$ and $h$. We have
		by Markov's inequality, Inequality~\eqref{eq:L1bound} and the definition of $\widetilde C_u$ that
		\begin{equation}\label{eq:Omega-bound}
			\bP(\widetilde \gO_u)
			=1- \bP(\|u_{N,h}\|_{V}> \widetilde C_u)
			\ge 1 - \frac{\|u_{N,h}\|_{L^1(\gO, \bP; V)}}{\widetilde C_u}
			\ge \frac{1}{2},
		\end{equation}
		holds for all $N\in\bN$ and $h\in\mfH$.
		Now let $\gl>0$ be fixed. 
		For all $\go\in \widetilde\gO_u$ and $\gd\in B_\gl(0)$ we obtain
		\begin{align*}
			\Phi_{N,h}(\go; \gd)
			\le
			|\log(\rho(\gd-[\cO\circ u_{N,h}](\go)))|
			\le 
			\sup_{x\in B_\chi(0)} |\log(\rho(x))|<\infty,
		\end{align*}
		where $\chi:=\gl+\|\cO\|_{((H_0^{\gt_\cO})')^k}\widetilde C_u$ and the last estimate is finite by continuity of $\log\circ\rho:\bR^k\to\bR$.
		Hence, $\Phi_{N,h}$ satisfies the first part of Assumption~\ref{ass:potential} with $\gO_\gl:=\widetilde\gO_u$ with 
		\begin{equation*}
			\gk_1(\gl):=\sup_{x\in B_\chi(0)} |\log(\rho(x))|<\infty,
		\end{equation*}
		due to~\eqref{eq:Omega-bound}.
		The claim now follows analogously to the proof of Corollary~\ref{cor:lower-bound-Z}, since $\bP(\widetilde \gO_u)$ and $\gk_1(\gl)$ are bounded uniformly in $N$ and $h$.
	\end{proof}
	
	\begin{proof}[Proof of Proposition~\ref{prop:posterior_approx}:]
		The proof basically follows the proof of \cite[Proposition 10]{hoang2013complexity}, where we substitute the estimate from Theorem~\ref{thm:forward-error} at the appropriate positions. 
		Since both $\bP_\gd$ and $\bP_{\gd, N, h}$ are absolutely continuous with respect to $\bP$, we have for fixed $\gl>0$ and any $\gd\in B_\gl(0)$ that
		\be\label{eq:exp-taylor1}
		\begin{split}
			&2d_{\rm Hell}(\bP_\gd, \bP_{\gd, N, h})^2 \\
			&= 
			\int_\gO \left(
			Z(\gd)^{-1/2}\exp\left(-\frac{1}{2}\Phi(\go; \gd)\right)
			-
			Z_{N,h}(\gd)^{-1/2}\exp\left(-\frac{1}{2}\Phi_{N,h}(\go; \gd)\right)
			\right)^2d\bP(\go)\\
			&\le 
			2\int_\gO Z(\gd)^{-1}
			\left(\exp\left(-\frac{1}{2}\Phi(\go; \gd)\right)
			-\exp\left(-\frac{1}{2}\Phi_{N,h}(\go; \gd)\right)
			\right)^2d\bP(\go) \\
			&\quad +
			2\int_\gO\left(Z(\gd)^{-1/2}-Z_{N,h}(\gd)^{-1/2}\right)^2
			\exp\left(-\Phi_{N,h}(\go; \gd)\right)\bP(\go)\\
			&:=2(I+II).
		\end{split}
		\ee
		To bound $I$ we use Taylor-expansion and that $\gS$ in~\eqref{eq:potential} is positive definite to obtain 
			\begin{align*}
				&\left|\exp\left(-\frac{1}{2}\Phi(\go; \gd)\right)
				-\exp\left(-\frac{1}{2}\Phi_{N,h}(\go; \gd)\right)\right|\\
				&\quad\le 
				\frac{1}{2}
				\exp\left(-\frac{1}{2}\frac{k\log\left(2\pi\det(\gS)\right)}{2}\right)
				\left|\Phi(\go; \gd)-\Phi_{N,h}(\go; \gd)\right|\\
				&\quad\le C \left|\Phi(\go; \gd)-\Phi_{N,h}(\go; \gd)\right|,
			\end{align*}
			where $C>0$ is a deterministic constant.
		Now let $C_\rho\in(0,\infty)$ be the Lipschitz constant of $\rho$ in~\eqref{eq:Gauss-noise}.
		We use the bound in~\eqref{eq:exp-taylor1} together with Hölder's inequality to bound the first term by
		\begin{align*}
			I&\le 
			C \int_\gO Z(\gd)^{-1}
			\left|\Phi(\go; \gd)-\Phi_{N,h}(\go; \gd) \right|^2 d\bP(\go)
			\\
			&=C
			\int_\gO Z(\gd)^{-1}
			\left|\rho(\gd - [\cO\circ u](\go))
			-\rho(\gd - [\cO\circ u_{N,h}](\go))\right|^2d\bP(\go) \\
			&\le 
			C\frac{C_\rho^2\|\cO\|_{((H_0^{\gt_\cO})')^k}^2}{Z(\gd)}
			\int_\gO \|u(\go)-u_{N,h}(\go)\|_{H^{\gt_\cO}}^2 d\bP(\go) \\
			&\le 
			\frac{C}{Z(\gd)}\int_\gO \|u(\go)-u_N(\go)\|_V^2
			+\|u_N(\go)-u_{N,h}(\go)\|_{H^{\gt_\cO}}^2 d\bP(\go) 
			\\
			&\le 
			\frac{C}{Z(\gd)}\int_\gO \|u(\go)-u_N(\go)\|_V^2
			+\|u_N(\go)-u_{N,h}(\go)\|_V^{2\gt_\cO} 
			\,\|u_N(\go)-u_{N,h}(\go)\|_H^{2(1-\gt_\cO)} d\bP(\go) 
			\\
			&\le 
			\frac{C}{Z(\gd)}\left(
			\|u-u_N\|_{L^2(\gO, \bP; V)}^2
			+\|u_N-u_{N,h}\|_{L^{4\gt_\cO}(\gO, \bP; V)}^{2\gt_\cO}
			\|u_N-u_{N,h}\|_{L^{4(1-\gt_\cO)}(\gO, \bP; H)}^{2(1-\gt_\cO)}
			\right).
		\end{align*}	
		We then use Theorem~\ref{thm:forward-error} with $q:=4\max(\gt_\cO,(1-\gt_\cO))\le 4$ to derive the estimate
		\begin{align*}
			I\le \frac{C}{Z(\gd)}(2^{-2Nt}+h^{2\gt_\cO r}h^{2(1-\gt_\cO)2r})
			\le \frac{C}{Z(\gd)} (2^{-Nt}+h^{(2-\gt_\cO)r})^2.
		\end{align*}
		
		The second term is bounded by
		\begin{align*}
			II
			&= 
			\left(Z(\gd)^{-1/2}-Z_{N,h}(\gd)^{-1/2}\right)^2Z_{N,h}(\gd)\\
			&\le \frac{Z_{N,h}(\gd)}{\min(Z(\gd), Z_{N,h}(\gd))^3}
			\left|Z_{N,h}(\gd)-Z(\gd)\right|^2\\
			&\le \frac{\|\rho\|_{L^\infty(\bR^k)}}{\min(Z(\gd), Z_{N,h}(\gd))^3}
			\left|Z_{N,h}(\gd)-Z(\gd)\right|^2.
		\end{align*}
		Similar as for $I$, the Lipschitz continuity of $\rho$ and Theorem~\ref{thm:forward-error} then yield 
		\begin{align*}
			\left|Z_{N,h}(\gd)-Z(\gd)\right|^2
			&=
			\left|\int_\gO
			\rho(\gd - [\cO\circ u_{N,h}](\go))
			-\rho(\gd - [\cO\circ u](\go))d\bP(\go)\right|^2 \\
			&\le C(2^{-Nt}+h^{(2-\gt_\cO)r})^2.
		\end{align*}
		Thus, the claim follows by Lemmas~\ref{lem:local-boundedness} and~\ref{lem:lower-bound-Z-disc}, and with Corollary~\ref{cor:lower-bound-Z} since
		\begin{align*}
			d_{\rm Hell}(\bP_\gd, \bP_{\gd, N, h})
			\le 
			C\frac{\max(1,\|\rho\|_{L^\infty(\bR^k)})}{\min(1, Z(\gd), Z_{N,h}(\gd))^{3/2}}
			(2^{-Nt}+h^{(2-\gt_\cO)r})
			\le 
			\frac{C}{c(\gl)^{3/2}}(2^{-Nt}+h^{(2-\gt_\cO)r}).
		\end{align*}
	\end{proof}
	
	Let $\bE_{\gd,N,h}(\cdot):=\bE_{\bP_{\gd,N,h}}(\cdot)$ denote the expectation with respect to the approximated posterior $\bP_{\gd, N, h}$.	
	The bound in Proposition~\ref{prop:posterior_approx} controls the difference of $\bE_\gd$ and $\bE_{\gd,N,h}$.
	\begin{thm}\label{thm:expectation-error}
		Let $(\cX, \left\|\cdot\right\|_\cX)$ be an arbitrary Banach space and let $\varphi\in L^2(\gO, \bP; \cX)$. 
		Under the assumptions of Theorem~\ref{thm:forward-error}, there is for any $\gl>0$  a constant $C(\gl)>0$, independent of $\varphi$, $N$ and $h$, such that  
		\begin{equation*}
			\|\bE_{\gd}(\varphi) - \bE_{\gd, N, h}(\varphi)\|_\cX
			\le C(\gl)\|\varphi\|_{L^2(\gO, \bP; \cX)}(2^{-Nt}+h^{(2-\gt_\cO)r})
			\quad \text{for all $\gd\in B_\gl(0)$.}
		\end{equation*}
	\end{thm}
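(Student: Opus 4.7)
The plan is to reduce the $\cX$-valued expectation error bound to the scalar Hellinger bound already established in Proposition~\ref{prop:posterior_approx}. First, since both $\bP_\gd$ and $\bP_{\gd,N,h}$ are absolutely continuous with respect to $\bP$, I would write
\begin{equation*}
\bE_\gd(\varphi) - \bE_{\gd,N,h}(\varphi)
= \int_\gO \varphi(\go)\left(\frac{d\bP_\gd}{d\bP}(\go) - \frac{d\bP_{\gd,N,h}}{d\bP}(\go)\right) d\bP(\go),
\end{equation*}
take the $\cX$-norm inside the integral via the triangle (Bochner) inequality, and factor the difference of densities as
\begin{equation*}
\frac{d\bP_\gd}{d\bP} - \frac{d\bP_{\gd,N,h}}{d\bP}
= \left(\sqrt{\frac{d\bP_\gd}{d\bP}} + \sqrt{\frac{d\bP_{\gd,N,h}}{d\bP}}\right)
\left(\sqrt{\frac{d\bP_\gd}{d\bP}} - \sqrt{\frac{d\bP_{\gd,N,h}}{d\bP}}\right).
\end{equation*}

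Then I would apply the Cauchy--Schwarz inequality in $L^2(\gO,\bP)$: the difference of square roots contributes exactly $\sqrt{2}\,d_{\rm Hell}(\bP_\gd,\bP_{\gd,N,h})$, and the remaining factor is
\begin{equation*}
\int_\gO \|\varphi(\go)\|_\cX^2 \left(\sqrt{\tfrac{d\bP_\gd}{d\bP}(\go)} + \sqrt{\tfrac{d\bP_{\gd,N,h}}{d\bP}(\go)}\right)^2 d\bP(\go)
\le 2\int_\gO \|\varphi(\go)\|_\cX^2 \left(\tfrac{d\bP_\gd}{d\bP}(\go) + \tfrac{d\bP_{\gd,N,h}}{d\bP}(\go)\right) d\bP(\go)
\end{equation*}
after the elementary bound $(a+b)^2\le 2(a^2+b^2)$. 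The key pointwise estimates on the densities are the trivial upper bound $\rho \le \|\rho\|_{L^\infty(\bR^k)} = (2\pi\det\gS)^{-k/2}$ in the numerators of \eqref{eq:RN_derivative} and \eqref{eq:discrete_posterior}, combined with the uniform lower bounds $Z(\gd)\ge c(\gl)$ from Corollary~\ref{cor:lower-bound-Z} and $Z_{N,h}(\gd)\ge c(\gl)$ from Lemma~\ref{lem:lower-bound-Z-disc}. Both lower bounds are independent of $N$ and $h$, so the sum of the two densities is uniformly bounded by a constant depending only on $\gl$, and the integral above is controlled by $C(\gl)\|\varphi\|_{L^2(\gO,\bP;\cX)}^2$.

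Combining these pieces yields
\begin{equation*}
\|\bE_\gd(\varphi) - \bE_{\gd,N,h}(\varphi)\|_\cX
\le C(\gl)\|\varphi\|_{L^2(\gO,\bP;\cX)} \, d_{\rm Hell}(\bP_\gd, \bP_{\gd,N,h}),
\end{equation*}
and the claim follows by substituting the rate from Proposition~\ref{prop:posterior_approx}. I expect no substantive obstacle here: the vector-valued setting is handled transparently by taking $\|\cdot\|_\cX$ inside the integral before applying Cauchy--Schwarz (which then runs in the scalar space $L^2(\gO,\bP)$), so the argument is essentially the standard Bayesian ``Hellinger-implies-expectation'' estimate, made quantitative by the uniform-in-$(N,h)$ lower bounds on the normalizing constants that were the real work of Lemma~\ref{lem:lower-bound-Z-disc}.
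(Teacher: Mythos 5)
Your proposal is correct and follows essentially the same route as the paper: the paper verifies the uniform bounds $\|\varphi\|_{L^2(\gO,\bP_\gd;\cX)},\ \|\varphi\|_{L^2(\gO,\bP_{\gd,N,h};\cX)}\le (\|\rho\|_{L^\infty}/c(\gl))^{1/2}\|\varphi\|_{L^2(\gO,\bP;\cX)}$ via Corollary~\ref{cor:lower-bound-Z} and Lemma~\ref{lem:lower-bound-Z-disc} and then cites \cite[Lemma 6.37]{stuart2010inverse}, which is exactly the Hellinger-to-expectation inequality you derive by hand with the factorization $a-b=(\sqrt a+\sqrt b)(\sqrt a-\sqrt b)$ and Cauchy--Schwarz. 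Your inline derivation even reproduces the paper's constant $2(2\|\rho\|_{L^\infty}/c(\gl))^{1/2}$, and the final step via Proposition~\ref{prop:posterior_approx} is identical.
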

	
	\begin{proof}
		We fix $\gl>0$ and arbitrary $\gd\in B_\gl(0)$.
		Clearly, the density $\rho$ is continuous and bounded on $\bR^k$, hence $\|\rho\|_{L^\infty(\bR^k)}<\infty$.
		This together with Corollary~\ref{cor:lower-bound-Z} and $\varphi\in L^2(\gO, \bP; \cX)$ then shows
		\begin{align*}
			\|\varphi\|_{L^2(\gO, \bP_\gd; \cX)}^2
			&=
			\int_\gO \|\varphi(\go)\|_\cX^2 \frac{\exp\left(-\Phi(\go; \gd)\right)}{Z(\gd)}d\bP(\go) \\
			&=
			\frac{1}{Z(\gd)}\int_\gO \|\varphi(\go)\|_\cX^2 \rho(\gd - \cG(\go))d\bP(\go) \\
			&\le \frac{\|\rho\|_{L^\infty(\cD)}}{c(\gl)} \|\varphi\|_{L^2(\gO, \bP; \cX)}^2 
			<\infty.
		\end{align*}
		Thus, $\varphi\in L^2(\gO, \bP_\gd; \cX)$ is bounded uniformly for $\gd\in B_\gl(0)$.
		We find in the same fashion that $\varphi\in L^2(\gO, \bP_{\gd, N, h}; \cX)$ is bounded uniformly in $N$, $h$ and for $\gd\in B_\gl(0)$, as Lemma~\ref{lem:lower-bound-Z-disc} shows that
		\begin{align*}
			\|\varphi\|_{L^2(\gO, \bP_{\gd, N, h}; \cX)}^2
			&=
			\frac{1}{Z_{N,h}(\gd)}\int_\gO \|\varphi(\go)\|_\cX^2 \rho(\gd - [\cO\circ u_{N,h}](\go))d\bP(\go) \\
			&\le \frac{\|\rho\|_{L^\infty(\cD)}}{c(\gl)} \|\varphi\|_{L^2(\gO, \bP; \cX)}^2 
			<\infty.
		\end{align*}
		By \cite[Lemma 6.37]{stuart2010inverse} we then obtain 
		\begin{align*}
			\|\bE_{\gd}(\varphi) - \bE_{\gd, N, h}(\varphi)\|_\cX
			\le 
			2 \left(2\frac{\|\rho\|_{L^\infty(\cD)}}{c(\gl)}\right)^{1/2}
			\|\varphi\|_{L^2(\gO, \bP; \cX)}
			d_{\rm Hell}(\bP_\gd, \bP_{\gd, N, h}),
		\end{align*}
		and the claim follows for $C(\gl):=2 \left(2\frac{\|\rho\|_{L^\infty(\cD)}}{c(\gl)}\right)^{1/2}$ by Proposition~\ref{prop:posterior_approx}\,.
	\end{proof}
	
	\section{Markov Chain \MC}
	\label{sec:MCMC}
	
	We use Markov chain \MC\, (MCMC) sampling for the (approximate) posterior measure $\bP_{\gd, N, h}$, 
        where we assume that $\gd\in B_\gl(0)$ for a \emph{fixed} $\gl>0$. 
        For a concise notation, we equilibrate truncation and FE error by assuming $h^{(2-\gt_\cO)r}\simeq 2^{-Nt}$,
        and use the abbreviations $\bP_h:=\bP_{\gd, N, h}$ and $\bE_h:=\bE_{\gd, N, h}$ throughout.

	\subsection{Singlelevel Markov chain \MC} 
        \label{sec:SLMCMC}
	Given the current state $\go^{(k)}$, 
        we draw a candidate $v^{(k)}\sim \bQ(\go^{(k)}; \cdot)$, where $\bQ(\go^{(k)};\,\cdot):\cA\to[0,1]$ 
        is a given proposal probability measure on $(\gO,\cA)$, depending on the current state $\go^{(k)}$.
	We further define the measures $\nu$ and $\nu^\top$ on $\gO\times\gO$ via 
        $d\nu(\go,v):=\bQ(\go; d v)d\bP_h(\go)$ and $d\nu(\go,v)^\top:=\bQ(v; d\go)d\bP_h(v)$ for any $(\go,v)\in\gO\times\gO$,
	and suppose that $\bQ$ is chosen such that $\nu^\top \ll \nu$.
	The new proposal is accepted as next state, i.e. $\go^{(k+1)}=v^{(k)}$, 
        with acceptance probability 
	\begin{equation}\label{eq:acceptance}
		\ga(\go^{(k)}, v^{(k)})
		:=\min
		\left\{1, \frac{d\nu^\top(\go^{(k)}; v^{(k)})}{d\nu(\go^{(k)}; v^{(k)})}\right\}.
	\end{equation}
	Note that $\ga$ in~\eqref{eq:acceptance} is well-defined due to the assumption $\nu^\top \ll \nu$. 
	If $v^{(k)}$ is rejected, we keep the current state $\go^{(k+1)}=\go^{(k)}$.
	This approach is a variant of the Metropolis-Hastings algorithm and 
        generates a Markov chain $(\go^{(k)}, k\in\bN)$ with stationary distribution $\bP_h$. 
	Clearly, the generated samples $(\go^{(k)}, k\in\bN)$ are correlated in a non-trivial way. 
	It is well-known that
    a good choice of proposal density $\bQ$ leads to low correlation and an efficient algorithm. 
	We will in particular focus on the \textit{independence sampler}, where 
	$\bQ(\go^{(k)}; dv^{(k)})=d\bP(v^{(k)})$, that is, 
        the proposal $v^{(k)}$ is drawn from the prior measure $\bP$, 
        independent of the current state $\go^{(k)}$ of the Markov chain.
	
	Now let $\go^{(1)},\dots, \go^{(M)}$, $M\in\bN$ be a sequence of MCMC samples from $\bP_h$.
	We aim to estimate the posterior mean of $\varphi:\gO\to\bR,\; \go\mapsto [\Psi\circ u](\go)$, 
        where 
	$\Psi:V\to \bR$ is a deterministic functional and $u$ is the solution to~\eqref{eq:elliptic-pde-besov}.
	The corresponding \emph{\MCMC\, estimator} of $\bE_h(\varphi)$ is then denoted by 
	\begin{equation}\label{eq:mcmc-est}
		E_M^h(\varphi):= \frac{1}{M}\sum_{i=1}^M \varphi(\go^{(i)}).
	\end{equation}
	
	The sampling error of the \MCMC\, estimator is bounded by the next result. 
	
	\begin{lem}[Geometric ergodicity of independence sampler]\label{lem:geom_erg}
		Let $\cQ$ denote the distribution of the initial sample $\go^{(1)}$ and let $\cP_{\cQ}$ be the probability measure on the probability space generated by the \MCMC\, independence sampler. Furthermore, we denote by $\cE_\cQ$ the expectation with respect to $\cP_{\cQ}$.
		There exists $C=C(\gl)>0$ (recall that $\gd\in B_\gl(0)$) such that for all $\varphi\in L^2(\gO, \bP)$ and $h\in\mfH$ there holds 
		\begin{equation}\label{eq:geometric_mse}
			\cE_\cQ\left(\left(\bE_h(\varphi)-E_M^h(\varphi)\right)^2\right)
			\le C \|\varphi\|_{L^2(\gO, \bP)}^2M^{-1}.
		\end{equation}
	\end{lem}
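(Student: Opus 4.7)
My plan is to reduce~\eqref{eq:geometric_mse} to the classical uniform ergodicity of the independence Metropolis-Hastings sampler (Mengersen-Tweedie / Smith-Tierney / Liu), which applies whenever the Radon-Nikodym density $r:=d\bP_h/d\bP$ is uniformly bounded from above. The first task is to exhibit such an $r$-bound with a constant independent of $N$ and $h$. Since $\gS^{-1}$ is positive definite, the quadratic part of $\Phi_{N,h}(\cdot;\gd)$ is non-negative, so $\exp(-\Phi_{N,h}(\go;\gd)) \le c_\rho := (2\pi\det(\gS))^{-k/2}$ pointwise. Combined with the uniform lower bound $Z_{N,h}(\gd)\ge c(\gl)>0$ from Lemma~\ref{lem:lower-bound-Z-disc}, this yields
\[
r(\go) \;=\; \frac{\exp(-\Phi_{N,h}(\go;\gd))}{Z_{N,h}(\gd)} \;\le\; M(\gl) \;:=\; \frac{c_\rho}{c(\gl)} \quad\text{for all } \go\in\gO,\; \gd\in B_\gl(0),
\]
uniformly in $N\in\bN$ and $h\in\mfH$.

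The second step is to invoke the classical theorem that an independence MH sampler whose target has density bounded by $M(\gl)$ with respect to the proposal enjoys an $L^2(\gO,\bP_h)$-spectral gap of size $M(\gl)^{-1}$ for its centered transition operator $P$, i.e.\ $\|P\varphi - \bE_h(\varphi)\|_{L^2(\gO,\bP_h)} \le (1-M(\gl)^{-1})\|\varphi - \bE_h(\varphi)\|_{L^2(\gO,\bP_h)}$. A standard autocovariance-summation argument from this spectral estimate then gives, for a chain started in stationarity,
\[
\cE_{\bP_h}\left((E_M^h(\varphi)-\bE_h(\varphi))^2\right) \;\le\; \frac{C\,M(\gl)}{M}\,\|\varphi\|_{L^2(\gO,\bP_h)}^2.
\]
Replacing the stationary start by the arbitrary initial law $\cQ$ introduces an additional contribution that, again by the uniform contraction property of $P$ (a consequence of the same density bound), decays geometrically in $M$ and is therefore absorbed into the $M^{-1}$ rate.

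The last step converts the right-hand norm back to the prior by a change of reference measure: $\|\varphi\|_{L^2(\gO,\bP_h)}^2 = \int_\gO|\varphi|^2 r\,d\bP \le M(\gl)\,\|\varphi\|_{L^2(\gO,\bP)}^2$. This yields~\eqref{eq:geometric_mse} with a constant $C(\gl)$ that depends only on $\gS$, $\cO$, and on $c(\gl)$ from Lemma~\ref{lem:lower-bound-Z-disc}, but \emph{not} on the discretization parameters $N,h$ nor on $\varphi$.

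The main obstacle is ensuring that the spectral gap of $P$ is uniform in the discretization parameters, which is precisely what the $h$- and $N$-independent bound $r\le M(\gl)$ provides and which ultimately rests on Lemma~\ref{lem:lower-bound-Z-disc}. The fact that the forward solution $u_{N,h}$ is \emph{not} uniformly bounded in $\go$, due to the lack of uniform ellipticity of the diffusion coefficient, is handled in that lemma by restricting to the set $\widetilde\gO_u$ of realizations with controlled $\|u_{N,h}\|_V$ rather than by pointwise bounds; this indirect route is exactly what makes the independence sampler viable in the present Besov-random-tree setting.
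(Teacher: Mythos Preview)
Your proposal is correct and follows essentially the same route as the paper: both arguments hinge on the uniform bound $d\bP_h/d\bP \le \|\rho\|_\infty/c(\gl)$, obtained from Lemma~\ref{lem:lower-bound-Z-disc}, and then invoke the classical uniform ergodicity of the independence Metropolis--Hastings sampler (the paper cites Smith--Tierney and defers the MSE bound to \cite[Lemma~B.2]{hoang2020analysis}, whereas you sketch the spectral-gap/autocovariance argument and the change of reference measure explicitly). The only minor point to flag is that, as in the paper's own proof, the constant $C$ also depends on the initial law $\cQ$, not solely on $\gl$.
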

	\begin{proof}
		Lemma~\ref{lem:lower-bound-Z-disc} yields for any $h\in\mfH$ and $N\in\bN$ the uniform lower bound 
		\begin{equation*}
			Z_{N,h}(\gd)\ge c(\gl)>0\quad
			\text{for all $\gd \in B_\gl(0)$},
		\end{equation*}
		and hence 
		\begin{equation*}
			\esssup_{\go\in\gO}\; \frac{\exp(-\Phi_{N,h}(\go; \gd))}{Z_{N,h}(\gd)}
			\le \frac{\|\rho\|_\infty}{c(\gl)}<\infty.
		\end{equation*}
		We assume that $\frac{\|\rho\|_\infty}{c(\gl)}>1$ without loss of generality.
		Let $\bQ^{(n)}(\widetilde \go\,; \cdot):\gO\to [0,1]$ denote the the distribution of the Markov chain after $n$ steps when starting from (a fixed) $\widetilde \go\in\gO$.
		By \cite[Theorem 1 and Eq. (13)]{smith1996exact} it holds that for any $A\in\cA$ that 
		\begin{equation*}
			\left|\int_A \bQ^{(n)}(\widetilde \go, d\go) - \bP_h(A)\right|\le C \left(1-\frac{c(\gl)}{\|\rho\|_\infty}\right)^n, 
		\end{equation*} 
		thus the Markov chain converges geometrically to the target measure (note that $C$ depends on the distribution of the initial sample $\widetilde \go$.)
		The error bound~\eqref{eq:geometric_mse} then follows exactly in the same way as for the log-normal case in \cite[Lemma B.2 (p. 41/42)]{hoang2020analysis}, and is thus omitted here. 
	\end{proof}
	
	\begin{rem}
		\label{rem:proposal-kernel}
		We remark that the subsequent error analysis of the MCMC estimator in~\eqref{eq:mcmc-est} and its multilevel extension in Subsection~\ref{subsec:ml-mcmc} does \emph{not} rely on the independence sampler as proposal kernel. Our results rather extend to any choice of $\bQ$ in~\eqref{eq:acceptance} that satisfies a geometric ergodicity result as in Lemma~\ref{lem:geom_erg}.  
	\end{rem}
	We may only sample from an approximated quantity of interest (QoI) $\Psi\circ u_{N',h'}\approx \varphi$, where $h'>0$ and $N'\in\bN$ are discretization parameters as in Theorem~\ref{thm:forward-error}, that not necessarily need to coincide with $h$ and $N$ from $\bE_h$. We make the following assumptions on $\Psi$ to bound the resulting discretization error.

	\begin{assumption}\label{ass:functional}~
		Let $\Psi:V\to \bR$ and let $u:\gO\to V$ be the pathwise weak solution to~\eqref{eq:elliptic-pde-besov}.
		\begin{enumerate}[1.)]
			\item 
			Let $\gt_\Psi\in [0,1]$, let $\Psi:H^{\gt_\Psi}(\cD)\to \bR$ be a Fréchet-differentiable functional and denote by 
			\begin{equation*}
				\Psi':H^{\gt_\Psi}(\cD)\to \cL(H^{\gt_\Psi}(\cD); \bR)=(H^{\gt_\Psi}(\cD))'
			\end{equation*}
			the Fréchet-derivative of $\Psi$. 
			There are constants $C>0$, $\rho_1,\rho_2\ge 0$ such that for all $v\in H^{\gt_\Psi}(\cD)$  
			\begin{equation}\label{eq:functionalgrowth}
				|\Psi(v)|\le C(1+\|v\|_{H^{\gt_\Psi}(\cD)}^{\rho_1}), \quad
				\|\Psi'(v)\|_{\cL(H^{\gt_\Psi}(\cD); \bR)}\le C(1+\|v\|_{H^{\gt_\Psi}(\cD)}^{\rho_2}).
			\end{equation}
			\item \label{item:Lqregularity} There holds $u\in L^{q}(\gO;V)$ and there are constants $t>0$, $r\in(0,1]$, such that Theorem~\ref{thm:forward-error} holds for $q:=6\max(\rho_1,\rho_2+1)$.
		\end{enumerate}
	\end{assumption}
	\begin{rem}
		Assumption~\ref{ass:functional} is natural, and includes in particular bounded linear functionals $\Psi$, where $\rho_1=1$ and $\rho_2=0$. Moreover, the condition $q=6\max(\rho_1,\rho_2 + 1)\ge 6$ in the second part is necessary to to bound the MSE in Theorem~\ref{thm:ml-mcmc} below. However, this restriction only applies in case that $p=1$, since $q$ may be arbitrary large in Theorems~\ref{thm:solution-regularity} and~\ref{thm:forward-error} for $p>1$.
	\end{rem}
	
	We record the following result to bound the approximation error. 
	
	\begin{thm}\label{thm:qoi-error}
		Under Assumption~\ref{ass:functional}, there is a constant $C>0$, such that for any $N'\in\bN$ and $h'\in\mfH$ there holds 
		\begin{equation}
			\label{eq:qoi-bound}
			\|\Psi(u)-\Psi(u_{N',h'})\|_{L^6(\gO)} 
			\le C\left(2^{-tN'} + h'^{(2-\gt_\Psi)r}\right)
		\end{equation}
	\end{thm}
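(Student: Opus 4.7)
My plan is to prove Theorem~\ref{thm:qoi-error} via three main steps: an integral mean-value estimate using the Fréchet derivative of $\Psi$, a Hölder splitting in $\gO$ that separates a growth factor from the pathwise forward error, and a spatial Sobolev interpolation combined with Theorem~\ref{thm:forward-error} to extract the power $h'^{(2-\gt_\Psi)r}$.

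First I would use the fundamental theorem of calculus, applied pathwise in $\go\in\gO$, to write
\[
\Psi(u(\go)) - \Psi(u_{N',h'}(\go)) = \int_0^1 \Psi'\bigl(u_{N',h'}(\go) + s(u(\go) - u_{N',h'}(\go))\bigr)\bigl(u(\go) - u_{N',h'}(\go)\bigr)\,ds.
\]
Combining this with the growth bound~\eqref{eq:functionalgrowth} on $\Psi'$, the convexity of $x\mapsto x^{\rho_2}$, and $s\in[0,1]$, I obtain the pointwise estimate
\[
|\Psi(u) - \Psi(u_{N',h'})| \le C\bigl(1 + \|u\|_{H^{\gt_\Psi}(\cD)}^{\rho_2} + \|u_{N',h'}\|_{H^{\gt_\Psi}(\cD)}^{\rho_2}\bigr)\,\|u - u_{N',h'}\|_{H^{\gt_\Psi}(\cD)}.
\]

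Next I would apply Hölder's inequality in $\gO$ with conjugate exponents $(a,b)$ satisfying $a^{-1} + b^{-1} = 6^{-1}$, chosen so that $a\rho_2 \le q$ and $b \le q$, where $q = 6\max(\rho_1,\rho_2+1)$ is the integrability provided by Assumption~\ref{ass:functional}\,\ref{item:Lqregularity} (for instance $a = 6(\rho_2+1)/\rho_2$, $b = 6(\rho_2+1)$ if $\rho_2>0$, and $a=\infty$, $b=6$ if $\rho_2=0$). Theorem~\ref{thm:solution-regularity}, combined with the reverse triangle inequality and Theorem~\ref{thm:forward-error} applied to $u_{N',h'}$, yields an $(N',h')$-uniform bound for the growth factor in $L^a(\gO)$. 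For the remaining error factor, I would use the Sobolev interpolation $\|v\|_{H^{\gt_\Psi}(\cD)} \le C\|v\|_V^{\gt_\Psi}\|v\|_H^{1-\gt_\Psi}$ together with the generalized Hölder inequality in $\gO$ (exponents $1/\gt_\Psi$ and $1/(1-\gt_\Psi)$) to get
\[
\|u - u_{N',h'}\|_{L^b(\gO;H^{\gt_\Psi}(\cD))} \le \|u-u_{N',h'}\|_{L^b(\gO;V)}^{\gt_\Psi}\,\|u-u_{N',h'}\|_{L^b(\gO;H)}^{1-\gt_\Psi}.
\]
Using the triangle inequality $u - u_{N',h'} = (u - u_{N'}) + (u_{N'} - u_{N',h'})$ together with Poincaré to bound $\|u-u_{N'}\|_H$ by $\|u-u_{N'}\|_V$, and Theorem~\ref{thm:forward-error} (applicable since $b\le q$), the two right-hand factors are controlled by $C(2^{-tN'}+h'^{r})$ and $C(2^{-tN'}+h'^{2r})$, respectively. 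Expanding the product and identifying the dominant $h'$-power $\gt_\Psi r + (1-\gt_\Psi)\cdot 2r = (2-\gt_\Psi)r$ delivers the claim.

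The main technical subtlety will be bookkeeping the integrability restriction $q \le \ol q$ in the $p=1$ case of Theorem~\ref{thm:solution-regularity}, which is exactly what motivates the threshold $q = 6\max(\rho_1,\rho_2+1)$ imposed in Assumption~\ref{ass:functional}\,\ref{item:Lqregularity}; the degenerate cases $\gt_\Psi\in\{0,1\}$ and $\rho_2=0$ are easier and handled by a direct computation bypassing the generalized Hölder step.
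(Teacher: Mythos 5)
The paper does not prove Theorem~\ref{thm:qoi-error} in situ: its ``proof'' is a one-line deferral to the second part of the proof of Theorem 5.4 in the companion paper \cite{SS22}. Your overall architecture --- a mean-value estimate via $\Psi'$ and the growth bound~\eqref{eq:functionalgrowth}, a H\"older splitting in $\gO$ with exponents summing to $1/6$ that separates the growth factor (uniformly bounded in $L^a(\gO)$ by Theorem~\ref{thm:solution-regularity}, the reverse triangle inequality and Theorem~\ref{thm:forward-error}) from the error factor in $L^b(\gO)$ with $b\le q$, and a $V$--$H$ interpolation to reach the exponent $(2-\gt_\Psi)r$ --- is the natural one, and it matches the analogous computation the paper itself performs for $\gt_\cO$ in the proof of Proposition~\ref{prop:posterior_approx}. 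Your exponent bookkeeping ($a\rho_2\le q$, $b\le q$ with $q=6\max(\rho_1,\rho_2+1)$) is also consistent with Assumption~\ref{ass:functional}.

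There is, however, a genuine error in your final step: the order in which you apply interpolation and the triangle inequality. You interpolate the \emph{full} difference, $\|u-u_{N',h'}\|_{L^b(\gO;H^{\gt_\Psi}(\cD))}\le\|u-u_{N',h'}\|_{L^b(\gO;V)}^{\gt_\Psi}\|u-u_{N',h'}\|_{L^b(\gO;H)}^{1-\gt_\Psi}$, and only then split each factor, arriving at $(2^{-tN'}+h'^{r})^{\gt_\Psi}(2^{-tN'}+h'^{2r})^{1-\gt_\Psi}$. Expanding this product produces the cross term $h'^{r\gt_\Psi}\,2^{-tN'(1-\gt_\Psi)}$, which for $\gt_\Psi\in(0,1)$ is \emph{not} dominated by $2^{-tN'}+h'^{(2-\gt_\Psi)r}$: under the balanced coupling $2^{-tN'}\simeq h'^{(2-\gt_\Psi)r}$ (precisely the regime used in Section~\ref{sec:MCMC}), the cross term is of order $h'^{r(2-2\gt_\Psi+\gt_\Psi^2)}$ and $2-2\gt_\Psi+\gt_\Psi^2<2-\gt_\Psi$, so your route yields a strictly worse rate (e.g.\ $h'^{5r/4}$ instead of $h'^{3r/2}$ for $\gt_\Psi=\tfrac{1}{2}$, $t=r=1$); no application of Young's inequality can repair this, since the required exponents $\tfrac{2-\gt_\Psi}{\gt_\Psi}$ and $\tfrac{1}{1-\gt_\Psi}$ are not conjugate. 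The fix is to reverse the order, exactly as in the proof of Proposition~\ref{prop:posterior_approx}: first split $u-u_{N',h'}=(u-u_{N'})+(u_{N'}-u_{N',h'})$ in the $H^{\gt_\Psi}(\cD)$-norm, bound the truncation part directly through the embedding $V\hookrightarrow H^{\gt_\Psi}(\cD)$ (no interpolation needed, giving $C2^{-tN'}$), and interpolate only the finite element part, $\|u_{N'}-u_{N',h'}\|_{L^b(\gO;H^{\gt_\Psi}(\cD))}\le\|u_{N'}-u_{N',h'}\|_{L^b(\gO;V)}^{\gt_\Psi}\|u_{N'}-u_{N',h'}\|_{L^b(\gO;H)}^{1-\gt_\Psi}\le Ch'^{(2-\gt_\Psi)r}$. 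With this correction (and the degenerate cases $\gt_\Psi\in\{0,1\}$, $\rho_2=0$ handled as you indicate), your argument goes through.
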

	
	\begin{proof}
		The claim is shown in the second part of the proof of \cite[Theorem 5.4]{SS22}.
	\end{proof}
	\begin{rem}
		The estimate~\eqref{eq:qoi-bound} implies in particular the weaker bound 
		\begin{equation*}
			\|\Psi(u)-\Psi(u_{N',h'})\|_{L^2(\gO)} 
			\le C\left(2^{-tN'} + h'^{(2-\gt_\Psi)r}\right), 
		\end{equation*}
		which will be used in Corollary~\ref{cor:mcmc-error} below. 
		On the other hand, we require the bound with respect to $L^6(\gO)$ in~\eqref{eq:qoi-bound} to bound the mean-squared error of the multilevel MCMC estimator in Theorem~\ref{thm:ml-mcmc} in the next subsection.
	\end{rem}
	
	Based on~\eqref{eq:qoi-bound}, 
	we assume that $2^{-tN'} \simeq h'^{(2-\gt_\Psi)r}$ for simplicity and 
	consider the approximated QoI $\varphi_{h'}:=\Psi(u_{N',h'})$.
	The overall error of the \MCMC\, estimator then depends 
	on the regularity of the functionals $\cO$ and $\Psi$. 
	For notational convenience we introduce the variables
	\begin{equation}
		\eta_\cO:=2-\gt_\cO\in[1,\frac{3}{2}),
		\quad\text{and}\quad
		\eta_\Psi:=2-\gt_\Psi\in[1,2].
	\end{equation}

	\begin{cor}
		\label{cor:mcmc-error}
		Under Assumption~\ref{ass:functional}, there is a $C>0$, independent of $h$ and $h'$, such that
		\begin{equation*}
			\cE_\cQ\left(\left(\bE_\gd(\varphi)-E_M^h(\varphi_{h'})\right)^2\right)
			\le C\left(h^{2\eta_\cO r}+(h')^{2\eta_\Psi r}+M^{-1}\right).
		\end{equation*}
	\end{cor}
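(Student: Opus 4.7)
The plan is to split the error by a triangle inequality into three parts:
\begin{equation*}
\bE_\gd(\varphi)-E_M^h(\varphi_{h'})
= \underbrace{\bE_\gd(\varphi)-\bE_h(\varphi)}_{\rm (I)}
+ \underbrace{\bE_h(\varphi)-\bE_h(\varphi_{h'})}_{\rm (II)}
+ \underbrace{\bE_h(\varphi_{h'})-E_M^h(\varphi_{h'})}_{\rm (III)}.
\end{equation*}
Since (I) and (II) are deterministic, squaring and applying $(a+b+c)^2\le 3(a^2+b^2+c^2)$ reduces the claim to bounding each term separately, with only $\cE_\cQ(\cdot)$ acting on the (III)-contribution.

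For (I), I would invoke Theorem~\ref{thm:expectation-error} with $\cX=\bR$, using the equilibration $2^{-Nt}\simeq h^{(2-\gt_\cO)r}$, to obtain $|{\rm (I)}|\le C(\gl)\|\varphi\|_{L^2(\gO,\bP)}h^{\eta_\cO r}$. To justify $\varphi=\Psi\circ u\in L^2(\gO,\bP)$, I would combine the growth bound from Assumption~\ref{ass:functional} with Theorem~\ref{thm:solution-regularity}, noting that $q=6\max(\rho_1,\rho_2+1)\ge 2\rho_1$ controls the required moments uniformly. For (II), I would dominate $|{\rm (II)}|\le \bE_h(|\varphi-\varphi_{h'}|)$, then change measure using Corollary~\ref{cor:lower-bound-Z}/Lemma~\ref{lem:lower-bound-Z-disc} and the boundedness of $\rho$:
\begin{equation*}
\bE_h(|\varphi-\varphi_{h'}|) \le \frac{\|\rho\|_{L^\infty(\bR^k)}}{c(\gl)} \|\varphi-\varphi_{h'}\|_{L^1(\gO,\bP)} \le C\|\varphi-\varphi_{h'}\|_{L^2(\gO,\bP)},
\end{equation*}
and conclude via Theorem~\ref{thm:qoi-error} (the $L^2$ version sufficing here) together with the equilibration $2^{-tN'}\simeq (h')^{(2-\gt_\Psi)r}$ to get $|{\rm (II)}|\le C(h')^{\eta_\Psi r}$.

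For (III), Lemma~\ref{lem:geom_erg} directly yields
\begin{equation*}
\cE_\cQ\bigl((\bE_h(\varphi_{h'})-E_M^h(\varphi_{h'}))^2\bigr)
\le C(\gl) \|\varphi_{h'}\|_{L^2(\gO,\bP)}^2 M^{-1},
\end{equation*}
so the remaining task is to bound $\|\varphi_{h'}\|_{L^2(\gO,\bP)}$ uniformly in $h'$. Using the growth bound $|\Psi(v)|\le C(1+\|v\|_{H^{\gt_\Psi}}^{\rho_1})$ from Assumption~\ref{ass:functional}, together with the uniform moment estimate $\sup_{N',h'}\|u_{N',h'}\|_{L^{2\rho_1}(\gO,\bP;V)}<\infty$ that follows from Theorem~\ref{thm:forward-error} and the triangle inequality against the limit $u\in L^{q}(\gO,\bP;V)$, I obtain a constant independent of $h'$. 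Assembling the three pieces yields the stated bound.

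\textbf{Main obstacle.} The only subtle point is the uniform (in $h,h'$) control of the constants. This reduces to verifying that both $c(\gl)$ (from Lemma~\ref{lem:lower-bound-Z-disc}) and the $L^{2\rho_1}$-moments of $u_{N',h'}$ are bounded independently of the discretization, which is exactly where the condition $q\ge 6\max(\rho_1,\rho_2+1)$ in item~\ref{item:Lqregularity} of Assumption~\ref{ass:functional} (and, for $p=1$, the smallness of $\kappa$) is used; once this is secured, the three error contributions combine cleanly.
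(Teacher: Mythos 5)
Your proposal is correct and follows essentially the same route as the paper: the identical three-term splitting, Theorem~\ref{thm:expectation-error} for the posterior-approximation error, Theorem~\ref{thm:qoi-error} for the QoI-discretization error, and Lemma~\ref{lem:geom_erg} for the sampling error. The only (harmless) local difference is in term (II): the paper adds and subtracts the prior expectation $\bE(\varphi-\varphi_{h'})$ and applies Theorem~\ref{thm:expectation-error} once more, whereas you bound the Radon--Nikodym density $d\bP_h/d\bP$ by $\|\rho\|_{L^\infty}/c(\gl)$ directly; both yield the same estimate.
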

	
	\begin{proof}
		We consider the error splitting
		\begin{align*}
			\cE_\cQ\left(\left(\bE_\gd(\varphi)-E_M^h(\varphi_{h'})\right)^2\right)
			&\le 3 \cE_\cQ\left(\left(\bE_\gd(\varphi)-\bE_h(\varphi)\right)^2\right) \\
			&\quad +3\cE_\cQ\left(\left(\bE_h(\varphi)-\bE_h(\varphi_{h'})\right)^2\right) \\
			&\quad +3\cE_\cQ\left(\left(\bE_h(\varphi_{h'})-E_M^h(\varphi_{h'})\right)^2\right)\\
			&:= 3(I+II+III).
		\end{align*}
		As $\varphi\in L^2(\gO, \bP)$ and $2^{-Nt}\simeq h^{\eta_\cO r}$, the term $I$ is bounded by Theorem~\ref{thm:expectation-error} via
		\bee
		I= \left(\bE_\gd(\varphi)-\bE_h(\varphi)\right)^2
		= C(\gl)\|\varphi\|_{L^2(\gO, \bP; \cX)}(2^{-2Nt}+h^{2\eta_\cO r}) 
		\le C h^{2\eta_\cO r}.
		\eee
		We use $2^{-N't}\simeq (h')^{\eta_\Psi r}$, Theorems~\ref{thm:expectation-error} and~\ref{thm:qoi-error} to bound the term $II$ via
		\begin{align*}
			II 
			&=\left(\bE(\varphi-\varphi_{h'})
			+\bE_h(\varphi-\varphi_{h'})
			-\bE(\varphi-\varphi_{h'})
			\right)^2 \\
			&\le2\bE(|\varphi-\varphi_{h'}|^2)
			+2 \left(\bE_h(\varphi-\varphi_{h'})
			-\bE(\varphi-\varphi_{h'})
			\right)^2\\
			&\le C\bE(|\varphi-\varphi_{h'}|^2)(1+ h^{2\eta_\cO r})\\
			&\le C (h')^{2\eta_\Psi r}.
		\end{align*}
		The claim now follows with Lemma~\ref{lem:geom_erg}, as the term $III$ is bounded by 
		\begin{equation}
			III=\cE_\cQ\left(\left(\bE_h(\varphi_{h'})-E_M^h(\varphi_{h'})\right)^2\right)
			\le C \|\varphi_{h'}\|_{L^2(\gO, \bP)}^2M^{-1}
			\le C \|\varphi\|_{L^2(\gO, \bP)}^2M^{-1}.
		\end{equation}
	\end{proof}

	\subsection{\MML Markov chain \MC}
	\label{subsec:ml-mcmc}
	
	Now let $L\in\bN$ and consider refining sequences $h_0>\dots>h_L$ and $N_0< \dots < N_L$ of approximation parameters.
	We further denote the approximated posterior on level $\ell=0,\dots, L$ by $\bE_\ell:=\bE_{\gd, N_\ell, h_\ell}$. 
	Given a fixed $\ell$, we choose $L'(\ell)\in\bN$ and let the approximated QoI on levels $\ell'=0,\dots, L'(\ell)$ be given by $\varphi_{\ell'}:=\Psi(u_{N_{\ell'}',h_{\ell'}'})$. 
	We then approximate $\bE_\gd(\varphi)$ using telescopic sums via
	\begin{equation}\label{eq:ml-telescopic}
		\begin{split}
			\bE_\gd(\varphi)\approx \bE_L(\varphi)
			&= \sum_{\ell=1}^L \bE_\ell(\varphi)-\bE_{\ell-1}(\varphi) + \bE_{0}(\varphi) \\
			&\approx \sum_{\ell=1}^L \bE_\ell(\varphi_{L'(\ell)})-\bE_{\ell-1}(\varphi_{L'(\ell)}) + \bE_{0}(\varphi_{L'(0)}) \\
			&= \sum_{\ell=1}^L \sum_{\ell'=1}^{L'(\ell)} \bE_\ell(\varphi_{\ell'}-\varphi_{\ell'-1})-\bE_{\ell-1}(\varphi_{\ell'}-\varphi_{\ell'-1}) 
			+ \sum_{\ell=1}^{L} \bE_\ell(\varphi_0)-\bE_{\ell-1}(\varphi_0) \\
			&\quad+ \sum_{\ell'=1}^{L'(0)} \bE_{0}(\varphi_{\ell'}-\varphi_{\ell'-1}) +  \bE_{0}(\varphi_0).
		\end{split}
	\end{equation} 
	We further introduce the truncation function 
	\begin{equation}\label{eq:ml-truncation}
		\cI_\ell(\go):= \indi_{\{ \Phi_\ell(\go, \gd) \le  \Phi_{\ell-1}(\go, \gd)\}}\in\{0,1\},
		\quad \ell=1,\dots, L,
	\end{equation}
	where $\Phi_\ell(\go, \gd)$ is the level $\ell$-approximation of the Bayesian potential, i.e, 
	\begin{equation}\label{eq:potential-approx}
		\Phi_\ell(\go, \gd):=-\log\left(\rho(\gd-[\cO\circ u_{N_\ell,h_\ell}](\go))\right).
	\end{equation}
	Following \cite[Section 4 and Proposition A.1]{hoang2020analysis}, this allows us to represent the expansion in~\eqref{eq:ml-telescopic} via 
	\begin{equation}\label{eq:ml-A}
		\begin{split}
			&\sum_{\ell=1}^L \bE_\ell(\varphi_{L'(\ell)})-\bE_{\ell-1}(\varphi_{L'(\ell)}) + \bE_{0}(\varphi_{L'(0)})\\
			= &\sum_{\ell=1}^L \sum_{\ell'=0}^{L'(\ell)}
			\bE_\ell(A^{(1)}_{\ell, \ell'}) + \bE_{\ell-1}(A^{(2)}_{\ell, \ell'})
			+
			\bE_\ell(A^{(3)}_{\ell}) \bE_{\ell-1}(A^{(4)}_{\ell, \ell'}+A^{(8)}_{\ell, \ell'})\\
%			\bE_{\ell-1}(A^{(5)}_{\ell}) \bE_{\ell}(A^{(6)}_{\ell, \ell'}+A^{(7)}_{\ell, \ell'}) \\
			&+
			 \sum_{\ell=1}^L \sum_{\ell'=0}^{L'(\ell)}
			 \bE_{\ell-1}(A^{(5)}_{\ell}) \bE_{\ell}(A^{(6)}_{\ell, \ell'}+A^{(7)}_{\ell, \ell'}) 
			 +
			 \sum_{\ell'=1}^{L'(\ell)} \bE_{0}(\varphi_{\ell'}-\varphi_{\ell'-1}) +  \bE_{0}(\varphi_0),
		\end{split}
	\end{equation} 
	where the terms $A^{(1)}-A^{(8)}$ are given for $\varphi_{-1}:=0$  by 
	\begin{equation}\label{eq:A-def}
		\begin{split}
			A^{(1)}_{\ell, \ell'}&:=(1-\exp(\Phi_\ell(\go, \gd) -  \Phi_{\ell-1}(\go, \gd)))
			(\varphi_{\ell'}-\varphi_{\ell'-1})\cI_\ell, \\
			A^{(2)}_{\ell, \ell'}&:=(\exp(\Phi_{\ell-1}(\go, \gd) -  \Phi_{\ell}(\go, \gd))-1)
			(\varphi_{\ell'}-\varphi_{\ell'-1})(1-\cI_\ell), \\
			A^{(3)}_{\ell}&:=(\exp(\Phi_\ell(\go, \gd) -  \Phi_{\ell-1}(\go, \gd))-1)
			\cI_\ell, \\
			A^{(4)}_{\ell, \ell'}&:=(\varphi_{\ell'}-\varphi_{\ell'-1})\cI_\ell, \\
			A^{(5)}_{\ell}&:=(1-\exp(\Phi_{\ell-1}(\go, \gd) -  \Phi_{\ell}(\go, \gd)))
			(1-\cI_\ell), \\
			A^{(6)}_{\ell, \ell'}&:=\exp(\Phi_\ell(\go, \gd) -  \Phi_{\ell-1}(\go, \gd))
			(\varphi_{\ell'}-\varphi_{\ell'-1})\cI_\ell, \\
			A^{(7)}_{\ell, \ell'}&:=(\varphi_{\ell'}-\varphi_{\ell'-1})(1-\cI_\ell), \\
			A^{(8)}_{\ell, \ell'}&:=\exp(\Phi_{\ell-1}(\go, \gd) -  \Phi_{\ell}(\go, \gd))
			(\varphi_{\ell'}-\varphi_{\ell'-1})(1-\cI_\ell).
		\end{split}
	\end{equation} 
	We now replace the expectations $\bE_\ell$ in~\eqref{eq:ml-A} by \MCMC\, estimators $E_{M_{\ell, \ell'}}^\ell(\cdot)$, where the number of samples $M_{\ell, \ell'}$ depends on both discretization levels $\ell$ and $\ell'$. 
	This yields the \emph{\ML\MCMC} (ML-MCMC) estimator 
	
	\begin{equation}\label{eq:ml-mcmc}
		\begin{split}
			E_L(\varphi)
			&:= \sum_{\ell=1}^L \sum_{\ell'=0}^{L'(\ell)}
			E_{M_{\ell, \ell'}}^\ell(A^{(1)}_{\ell, \ell'}) + E_{M_{\ell, \ell'}}^{\ell-1}(A^{(2)}_{\ell, \ell'})\\
			&\qquad\qquad +E_{M_{\ell, \ell'}}^\ell(A^{(3)}_{\ell}) E_{M_{\ell, \ell'}}^{\ell-1}(A^{(4)}_{\ell, \ell'}+A^{(8)}_{\ell, \ell'})+
			E_{M_{\ell, \ell'}}^{\ell-1}(A^{(5)}_{\ell}) E_{M_{\ell, \ell'}}^\ell(A^{(6)}_{\ell, \ell'}+A^{(7)}_{\ell, \ell'}) \\
			&\quad+ \sum_{\ell'=1}^{L'(\ell)} E_{M_{0, \ell'}}^0(\varphi_{\ell'}-\varphi_{\ell'-1}) 
			+  E_{M_{0, 0}}^0(\varphi_0).
		\end{split}
	\end{equation} 
	
	For technical reasons, we require the following assumption on the ML-MCMC estimator.
	\begin{assumption}
		\label{ass:independence}
		Let $L\in\bN$ be given and fix a discretization level $\ell\in\{0,\dots,L\}$.
		The estimators $(E_{M_{\ell, \ell'}}^\ell(A^{(3)}_{\ell}),\, E_{M_{\ell, \ell'}}^\ell(A^{(6)}_{\ell, \ell'}+A^{(7)}_{\ell, \ell'}))$ in~\eqref{eq:ml-mcmc} are independent of $(E_{M_{\ell, \ell'}}^{\ell-1}(A^{(5)}_{\ell}),\, E_{M_{\ell, \ell'}}^{\ell-1}(A^{(4)}_{\ell, \ell'}+A^{(8)}_{\ell, \ell'}))$.
		Furthermore, all estimators in~\eqref{eq:ml-mcmc} are independent with respect to $\ell\in\{0,\dots, L\}$, meaning the Markov chains for each posterior refinement level $\ell\in\{0,\dots, L\}$ are generated independently.
	\end{assumption}
	
	Assumption~\ref{ass:independence} is necessary to derive the mean-squared error in Theorem~\ref{thm:ml-mcmc}, without strengthening Lemma~\ref{lem:geom_erg} to fourth moments.
	For fixed $\ell=0,\dots, L$ we denote by $\cP_\ell$ the probability measure on the probability space generated by all Markov chains with posterior refinement level $\ell$. The combination of all $L+1$ measures $\cP_\ell$ yields with the second part of Assumption~\ref{ass:independence} the product probability measure $\cP_L^{\rm ML}:=\cP_0\otimes\dots\otimes\cP_L$, and we denote the associated expectation by $\cE_L^{\rm ML}$. With this at hand we are able to quantify the MSE of the ML-MCMC algorithm:

	\begin{thm}\label{thm:ml-mcmc}
		Let Assumptions~\ref{ass:functional} and~\ref{ass:independence} hold, and let $h_0\in\mfH$ denote the coarsest level FE refinement parameter. 
		For any fixed $0<\eps<1$ set 
		\begin{align*}
			&L:=\left\lceil\frac{-\log_2(\eps)}{\eta_\cO r} + \log_2(h_0)\right\rceil, 
			\quad L'(\ell)=L':=\left\lceil L\frac{\eta_\cO}{\eta_\Psi} \right\rceil \quad \text{for $\ell=0,\dots, L$},\quad\text{and} \\
			&h_\ell = h_02^{-\ell},\quad h_{\ell'} = h_02^{-\ell'}, \quad N_\ell = \left\lceil-\log_2(h_\ell)\frac{\eta_\cO r}{t}\right\rceil,
			\quad N_{\ell'} = \left\lceil-\log_2(h_{\ell'})\frac{\eta_\Psi r}{t}\right\rceil, \\
			&\text{for $\ell' = 0,\dots, L'(\ell)$, $\ell=0,\dots, L$.}
		\end{align*}
		Furthermore, set the number of samples $M_{\ell, \ell'}$ on each level as
		\begin{equation*}
			M_{\ell, \ell'}:=
			\begin{cases}
				\ceil{h_L^{-2r\eta_\cO} w_{0,0}}, \quad \quad &\text{for $\ell = \ell' = 0$,} \\
				\ceil{h_L^{-2r\eta_\cO} h_\ell^{2r\eta_\cO} w_{\ell,0}}, 
				\quad &\text{for $\ell = 1,\dots, L$ and $\ell'=0$,} \\
				\ceil{h_L^{-2r\eta_\cO} h_{\ell'}^{2r\eta_\Psi} w_{0,\ell'}}, 
				\quad &\text{for $\ell = 0$ and $\ell'=1,\dots, L'(0)$,} \\
				\ceil{h_L^{-2r\eta_\cO} h_\ell^{2r\eta_\cO} h_{\ell'}^{2r\eta_\Psi} 
					w_{\ell',\ell}}, 
				\quad &\text{for $\ell = 1,\dots, L$ and $\ell'=1,\dots, L'(\ell)$,}
			\end{cases}
		\end{equation*}
		where the weights $w_{\ell, \ell'}>0$ are selected 
		such that
		there is a $C_w>0$, independent of $L$, satisfying
		\begin{equation}\label{eq:ml-weights}
			w_{0,0}^{-1/2}
			+
			\sum_{\ell=1}^{L'(0)}w_{\ell,0}^{-1/2} +
			\sum_{\ell'=1}^{L'(0)}w_{0,\ell'}^{-1/2}
			+\sum_{\ell=1}^L\sum_{\ell'=1}^{L'(\ell)}w_{\ell, \ell'}^{-1/2}\le C_w<\infty.
		\end{equation}
		
		Then, there is a $C>0$, independent of $\eps$, such that  
		\begin{equation*}
			\cE_L^{\rm ML}\left(\left(\bE_\gd(\varphi)-E_L(\varphi)\right)^2\right)^{1/2}
			\le C\eps.
		\end{equation*}
	\end{thm}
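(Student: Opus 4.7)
The plan is to split the total error $\bE_\gd(\varphi)-E_L(\varphi)$ into a deterministic bias and a stochastic sampling error, and to estimate the latter by combining Lemma~\ref{lem:geom_erg} with sharp $L^q(\bP)$-bounds on the terms $A^{(i)}_{\ell,\ell'}$ in~\eqref{eq:A-def}. Specifically, I write
\begin{equation*}
\bE_\gd(\varphi)-E_L(\varphi)
= \bigl(\bE_\gd(\varphi)-\bE_L(\varphi)\bigr) + \bE_L\bigl(\varphi-\varphi_{L'(L)}\bigr) + \bigl(\bE_L(\varphi_{L'(L)})-E_L(\varphi)\bigr),
\end{equation*}
where the first two brackets are deterministic. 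Theorem~\ref{thm:expectation-error} bounds the first by $Ch_L^{r\eta_\cO}$; a Cauchy--Schwarz argument using the uniform boundedness of $d\bP_L/d\bP$ (guaranteed by the $L^\infty$-bound on $\rho$ and Lemma~\ref{lem:lower-bound-Z-disc}) together with Theorem~\ref{thm:qoi-error} bounds the second by $Ch_{L'(L)}^{r\eta_\Psi}$. The calibrations $L=\lceil-\log_2(\eps)/(r\eta_\cO)+\log_2 h_0\rceil$ and $L'(L)=\lceil L\eta_\cO/\eta_\Psi\rceil$ render both contributions $O(\eps)$.

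For the sampling error I would expand $\bE_L(\varphi_{L'(L)})$ via the telescopic representation~\eqref{eq:ml-A} and subtract the ML-MCMC estimator~\eqref{eq:ml-mcmc} summand by summand. Assumption~\ref{ass:independence} makes all summands (indexed by $(\ell,\ell')$) mutually independent in the product measure $\cP_L^{\rm ML}$, so Minkowski's inequality reduces the task to bounding the RMSE of each summand individually. Single-expectation summands of the form $\bE_\ell(A)-E_{M_{\ell,\ell'}}^\ell(A)$ are handled directly by Lemma~\ref{lem:geom_erg}, yielding MSE $\le C\|A\|_{L^2(\bP)}^2/M_{\ell,\ell'}$. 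Bilinear summands $E_{M_{\ell,\ell'}}^\ell(X)E_{M_{\ell,\ell'}}^{\ell-1}(Y)-\bE_\ell(X)\bE_{\ell-1}(Y)$ are analysed via the algebraic decomposition
\begin{equation*}
\hat X\hat Y-\mu_X\mu_Y=(\hat X-\mu_X)\mu_Y+\mu_X(\hat Y-\mu_Y)+(\hat X-\mu_X)(\hat Y-\mu_Y),
\end{equation*}
with $\mu_X:=\bE_\ell(X)$, $\mu_Y:=\bE_{\ell-1}(Y)$; after squaring, the MCMC mean-squared errors $\cE[(\hat X-\mu_X)^2]$ and $\cE[(\hat Y-\mu_Y)^2]$ are controlled by Lemma~\ref{lem:geom_erg}, while the cross-term factorises thanks to the per-level independence in Assumption~\ref{ass:independence}.

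The heart of the argument is obtaining the right $L^2(\bP)$-bounds on the $A^{(i)}_{\ell,\ell'}$. The truncation $\cI_\ell$ is designed precisely so that all exponential factors in~\eqref{eq:A-def} lie pointwise in $[0,1]$: on $\{\cI_\ell=1\}$ we have $\Phi_\ell\le\Phi_{\ell-1}$, so $\exp(\Phi_\ell-\Phi_{\ell-1})\le 1$ and $0\le 1-\exp(\Phi_\ell-\Phi_{\ell-1})\le|\Phi_\ell-\Phi_{\ell-1}|$; on $\{1-\cI_\ell=1\}$ the dual inequalities hold. Combined with the Lipschitz continuity of $\rho$ and the interpolation $\|v\|_{H^{\gt_\cO}}\le\|v\|_V^{\gt_\cO}\|v\|_H^{1-\gt_\cO}$, Theorem~\ref{thm:forward-error} gives $\|\Phi_\ell-\Phi_{\ell-1}\|_{L^q(\bP)}\le Ch_\ell^{r\eta_\cO}$ for all admissible $q$; together with the $L^6$-estimate on $\varphi_{\ell'}-\varphi_{\ell'-1}$ from Theorem~\ref{thm:qoi-error} and the Hölder inequality $\|XY\|_{L^2}\le\|X\|_{L^3}\|Y\|_{L^6}$, this produces
\begin{equation*}
\|A^{(i)}_{\ell,\ell'}\|_{L^2(\bP)}\le Ch_\ell^{r\eta_\cO}h_{\ell'}^{r\eta_\Psi}\text{ (bilinear indices)},\qquad \|A^{(3)}_\ell\|_{L^2(\bP)},\,\|A^{(5)}_\ell\|_{L^2(\bP)}\le Ch_\ell^{r\eta_\cO}.
\end{equation*}
Inserted into Lemma~\ref{lem:geom_erg} with the specified $M_{\ell,\ell'}$, each summand has MSE $\le Ch_L^{2r\eta_\cO}/w_{\ell,\ell'}$, and the weight condition~\eqref{eq:ml-weights} combined with Minkowski yields a total sampling RMSE of $O(h_L^{r\eta_\cO})=O(\eps)$.

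I expect the bookkeeping of the bilinear cross-product terms to be the main obstacle. For these, one must bound not only $\|X\|_{L^2(\bP)}$ and $\|Y\|_{L^2(\bP)}$, but also verify the mean bounds $|\mu_X|=|\bE_\ell(A^{(3)}_\ell)|\le Ch_\ell^{r\eta_\cO}$ and $|\mu_Y|=|\bE_{\ell-1}(A^{(4)}_{\ell,\ell'}+A^{(8)}_{\ell,\ell'})|\le Ch_{\ell'}^{r\eta_\Psi}$, so that the first two terms of the bilinear identity contribute at the same order as the linear summands and the cross-term strictly smaller. This, together with the need for sufficiently high moments of the forward error (whence the requirement $q\ge 6$ in Assumption~\ref{ass:functional}) and the uniform lower bound $Z_{N_\ell,h_\ell}\ge c(\gl)$ from Lemma~\ref{lem:lower-bound-Z-disc}, is where the proof is most delicate.
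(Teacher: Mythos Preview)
Your proposal is correct and follows essentially the same route as the paper: the same three-term error splitting (posterior-approximation bias, QoI-truncation bias, MCMC sampling error), the same use of the representation~\eqref{eq:ml-A} together with Lemma~\ref{lem:geom_erg} for the sampling error, and the same bilinear identity with Assumption~\ref{ass:independence} to factor the cross term. Two minor remarks: (i) for the second bias term you exploit that $L'(\ell)=L'$ is constant to collapse the outer telescope to $\bE_L(\varphi-\varphi_{L'})$ and bound it directly via $\|\varphi-\varphi_{L'}\|_{L^2(\bP)}$, whereas the paper keeps the telescopic form and bounds each increment $(\bE_\ell-\bE_{\ell-1})(\varphi-\varphi_{L'(\ell)})$ via Theorem~\ref{thm:expectation-error}; your shortcut is valid here and slightly cleaner. (ii) Your assertion that Assumption~\ref{ass:independence} makes \emph{all} $(\ell,\ell')$-summands mutually independent overstates what is assumed (only independence across $\ell$ is guaranteed, not across $\ell'$ for fixed $\ell$), but since you sum the per-summand RMSEs via Minkowski rather than Pythagoras this does not affect the argument.
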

	
	We remark that it is always possible to select admissible weights $w_{\ell, \ell'}$ that satisfy the uniform bound in~\eqref{eq:ml-weights}. Appropriate choices of $w_{\ell, \ell'}$ to achieve (quasi-)optimal computational complexity depend on the parameters $r, \eta_\cO, \eta_\Psi$ and $d$, and are given in Theorem~\ref{thm:ml-complexity} below.
	
	\begin{proof}[Proof of Theorem~\ref{thm:ml-mcmc}.]
		We use the error splitting 
		\begin{align*}
			&\cE_L^{\rm ML}\left(\left(\bE_\gd(\varphi)-E_L(\varphi)\right)^2\right)^{1/2} \\
			&\quad\le 
			|\bE_\gd(\varphi)-\bE_L(\varphi)| \\
			&\quad\quad +\left|\bE_L(\varphi)-
			\left(\sum_{\ell=1}^L \bE_\ell(\varphi_{L'(\ell)})-\bE_{\ell-1}(\varphi_{L'(\ell)}) + \bE_{0}(\varphi_{L'(0)})\right)\right| \\
			&\quad\quad +\cE_L^{\rm ML}\left(\left(\sum_{\ell=1}^L \bE_\ell(\varphi_{L'(\ell)})-\bE_{\ell-1}(\varphi_{L'(\ell)}) + \bE_{0}(\varphi_{L'(0)})-E_L(\varphi)\right)^2\right)^{1/2}\\
			&\quad
			=I+II+III.
		\end{align*}
		The first term is bounded with Theorem~\ref{thm:expectation-error} and the choices of $L$ and $N_L$ by 
		\begin{align*}
			I\le C (2^{-N_Lt}+h_L^{\eta_\cO r})
			\le C (h_02^{-L})^{\eta_\cO r}
			\le
			C\eps.
		\end{align*}
		We expand the first term in $II$ and use Theorems~\ref{thm:expectation-error} and~\ref{thm:qoi-error} to obtain the bound
		\begin{align*}
			II
			&=
			\left|
			\sum_{\ell=1}^L \bE_{\ell}(\varphi)-\bE_{\ell-1}(\varphi)- (\bE_\ell(\varphi_{L'(\ell)})-\bE_{\ell-1}(\varphi_{L'(\ell)})) + \bE_{0}(\varphi)-\bE_{0}(\varphi_{L'(0)})\right| \\
			&\le \sum_{\ell=1}^L 
			|\bE_{\ell}(\varphi-\varphi_{L'(\ell)})
			-\bE_{\ell-1}(\varphi-\varphi_{L'(\ell)})|
			+|\bE_{0}(\varphi)-\bE_{0}(\varphi_{L'(0)})| \\
			&\le C \sum_{\ell=0}^L 
			\|\varphi-\varphi_{L'(\ell)}\|_{L^2(\gO)} h_\ell^{\eta_\cO r} \\
			&\le C \sum_{\ell=0}^L
			h_{L'(\ell)}^{\eta_\Psi r} h_\ell^{\eta_\cO r} \\
			&\le C (h_02^{-L})^{\eta_\cO r}\sum_{\ell=0}^L
			(h_02^{-\ell})^{\eta_\cO r} \\
			&\le C\eps.
		\end{align*}
		
		To bound $III$, we are going to use the representation~\eqref{eq:ml-A} 
        and bound the estimation error with respect to all terms $A_{\ell,\ell'}^{(1)},\dots,A_{\ell,\ell'}^{(8)}$ separately. 
		Let $\cE_\ell$ denote the expectation with respect to $\cP_\ell$, 
        the probability measure on the space generated by the Markov chains on level $\ell$. 
		We have by Taylor-expansion,~\eqref{eq:ml-truncation},~\eqref{eq:potential} and~\eqref{eq:potential-approx}
		\be
		\begin{split}\label{eq:exp-taylor}
			&|(1-\exp(\Phi_\ell(\go, \gd) -  \Phi_{\ell-1}(\go, \gd)))\cI_\ell| \\
			&\le |\Phi_\ell(\go, \gd) -  \Phi_{\ell-1}(\go, \gd)|\\
			&= \frac{1}{2}[\cO\circ (u_{N_\ell, h_\ell}-u_{N_{\ell-1}, h_{\ell-1}})](\go)^\top
			\gS^{-1}(2\gd + [\cO\circ (u_{N_\ell, h_\ell}+u_{N_{\ell-1}, h_{\ell-1}})](\go))\\
			&\le C 
			\|u_{N_\ell,h_\ell}(\go)-u_{N_{\ell-1},h_{\ell-1}}(\go)\|_{H^{\gt_\cO}} 
			(1 +  \|u_{N_\ell,h_\ell}(\go)+u_{N_{\ell-1},h_{\ell-1}}(\go)\|_{H^{\gt_\cO}})
			\\
			&\le C 
			\left(\|u(\go)-u_{N_{\ell-1},h_{\ell-1}}(\go)\|_{H^{\gt_\cO}}
			+	
			\|u(\go)-u_{N_{\ell},h_{\ell}}(\go)\|_{H^{\gt_\cO}}\right) \\
			&\qquad\cdot\left(1 +  \|u_{N_\ell,h_\ell}(\go)\|_{H^{\gt_\cO}}
			+\|u_{N_{\ell-1},h_{\ell-1}}(\go)\|_{H^{\gt_\cO}}\right),	
		\end{split}
		\ee
		where $C=C(\gS, \gl, \cO)$ is independent of $\ell$. 
		Now let us first consider the case $\ell'\ge 1$.
		We obtain by Lemma~\ref{lem:geom_erg}, 
        the estimate in~\eqref{eq:exp-taylor} and
     	Hölder's inequality 
		\begin{align*}
			&\cE_\ell\left(\left(\bE_\ell(A_{\ell,\ell'}^{(1)})
			- E_{M_{\ell, \ell'}}^\ell(A_{\ell,\ell'}^{(1)})\right)^2\right)^{1/2} \\
			&\quad\le C M_{\ell, \ell'}^{-1/2}\|(1-\exp(\Phi_\ell(\cdot, \gd) -  \Phi_{\ell-1}(\cdot, \gd)))
			\cI_\ell(\varphi_{\ell'}-\varphi_{\ell'-1})\|_{L^2(\gO)} \\
			&\quad\le C M_{\ell, \ell'}^{-1/2}
			\| |\Phi_\ell(\cdot, \gd) -  \Phi_{\ell-1}(\cdot, \gd)|
			|\varphi_{\ell'}-\varphi_{\ell'-1}| \|_{L^2(\gO)} \\
			&\quad\le C M_{\ell, \ell'}^{-1/2}
			\left(\|u-u_{N_{\ell-1},h_{\ell-1}}\|_{L^6(\gO; H^{\gt_\cO})}
				+
			\|u-u_{N_{\ell},h_{\ell}}\|_{L^6(\gO; H^{\gt_\cO})}\right)\\
			&\qquad\cdot
				\left(1+\|u_{N_{\ell},h_{\ell}}\|_{L^6(\gO; H^{\gt_\cO})}
				 + \|u_{N_{\ell-1},h_{\ell-1}}\|_{L^6(\gO; H^{\gt_\cO})}\right)
			\|\varphi-\varphi_{\ell'}\|_{L^6(\gO)}. 
		\end{align*}
		As in the proof of Proposition~\ref{prop:posterior_approx}, 
        we then use Theorem~\ref{thm:forward-error} to show that
		\be\label{eq:ml_estimate}
		\begin{split}
			\|u-u_{N_{\ell},h_{\ell}}\|_{L^6(\gO; H^{\gt_\cO})}
			&\le 
			\|u-u_{N_{\ell}}\|_{L^6(\gO; V)}
			+
			\|u_{N_{\ell}}-u_{N_{\ell},h_{\ell}}\|_{L^6(\gO; H^{\gt_\cO})}\\
			&\le 
			C 2^{-N_\ell t} 
			+\|u_{N_{\ell}}-u_{N_{\ell},h_{\ell}}\|_{L^6(\gO; V)}^{\gt_\cO}
			\|u_{N_{\ell}}-u_{N_{\ell},h_{\ell}}\|_{L^6(\gO; H)}^{1-\gt_\cO}\\
			&\le 
			C (2^{-N_\ell t} + h_\ell^{(2-\gt_\cO)r}).
		\end{split}
		\ee
		This shows in particular 
		\begin{equation}\label{eq:ml_estimate2}
			\|u_{N_{\ell},h_{\ell}}\|_{L^6(\gO; H^{\gt_\cO})}
			\le 
			\|u_{N_{\ell},h_{\ell}}-u\|_{L^6(\gO; H^{\gt_\cO})}
			+\|u\|_{L^6(\gO; H^{\gt_\cO})}
			\le 
			C(1+\|u\|_{L^6(\gO; H^{\gt_\cO})}),
		\end{equation}
		where $C=C(N_0, h_0)$, and the last estimate is independent of $N_l$ and $h_\ell$.
		Since $h_{\ell-1} = 2 h_\ell$ and $2^{-N_\ell t} = h_\ell^{(2-\gt_\cO) r} =h_\ell^{\eta_\cO r}$ by the choice of $N_\ell$, the estimates \eqref{eq:ml_estimate},~\eqref{eq:ml_estimate2} and Theorem~\ref{thm:qoi-error} now show
		\begin{align*}
			\cE_\ell\left(\left(\bE_\ell(A_{\ell,\ell'}^{(1)})
			- E_{M_{\ell, \ell'}}^\ell(A_{\ell,\ell'}^{(1)})\right)^2\right)^{1/2}
			\le C M_{\ell, \ell'}^{-1/2} h_\ell^{\eta_\cO r}h_{\ell'}^{\eta_\Psi r}. 
		\end{align*}	
		Similarly, we find for $\ell'\ge1$ and due to $h_{\ell-1} = 2 h_\ell$ that 
		\begin{align*}
			\cE_\ell\left(\left(\bE_{\ell-1}(A_{\ell,\ell'}^{(2)})
			- E_{M_{\ell, \ell'}}^{\ell-1}(A_{\ell,\ell'}^{(2)})\right)^2\right)^{1/2} 
			\le C M_{\ell, \ell'}^{-1/2} h_\ell^{\eta_\cO r}h_{\ell'}^{\eta_\Psi r}.
		\end{align*}
		To treat the error with respect to the third term in~\eqref{eq:ml-A}, we use the triangle inequality to obtain
		\begin{equation}\label{eq:A3-est1}
			\begin{split}
				&\cE_\ell\left(\left(\bE_\ell(A_{\ell}^{(3)})
				\bE_{\ell-1}(A_{\ell,\ell'}^{(4)}+A_{\ell,\ell'}^{(8)})
				- E_{M_{\ell, \ell'}}^\ell(A_{\ell}^{(3)}) 
				E_{M_{\ell, \ell'}}^{\ell-1}(A_{\ell,\ell'}^{(4)}+A_{\ell,\ell'}^{(8)})\right)^2
				\right)^{1/2} \\
				&\le \cE_\ell\left(
				\left( \bE_\ell(A_{\ell}^{(3)}) - 
				E_{M_{\ell, \ell'}}^\ell(A_{\ell}^{(3)})\right)^2
				\bE_{\ell-1}(A_{\ell,\ell'}^{(4)}+A_{\ell,\ell'}^{(8)})^2
				\right)^{1/2}\\
				&\quad+
				\cE_\ell\left( E_{M_{\ell, \ell'}}^\ell(A_{\ell}^{(3)})^2
				\left(
				\bE_{\ell-1}(A_{\ell,\ell'}^{(4)}+A_{\ell,\ell'}^{(8)})
				-
				E_{M_{\ell, \ell'}}^{\ell-1}(A_{\ell,\ell'}^{(4)}+A_{\ell,\ell'}^{(8)})\right)^2
				\right)^{1/2} \\
				&\le 
				|\bE_{\ell-1}(A_{\ell,\ell'}^{(4)}+A_{\ell,\ell'}^{(8)})|
				\cE_\ell\left(
				\left( \bE_\ell(A_{\ell}^{(3)}) - 
				E_{M_{\ell, \ell'}}^\ell(A_{\ell}^{(3)})\right)^2
				\right)^{1/2}\\
				&\quad+
				\sqrt{2}|\bE_\ell(A_{\ell}^{(3)})|
				\cE_\ell\left(
				\left(
				\bE_{\ell-1}(A_{\ell,\ell'}^{(4)}+A_{\ell,\ell'}^{(8)})
				-
				E_{M_{\ell, \ell'}}^{\ell-1}(A_{\ell,\ell'}^{(4)}+A_{\ell,\ell'}^{(8)})\right)^2
				\right)^{1/2}	\\
				&\quad+
				\sqrt{2}\cE_\ell\left( 
				\left(
				E_{M_{\ell, \ell'}}^\ell(A_{\ell}^{(3)})-\bE_\ell(A_{\ell}^{(3)})
				\right)^2 \right)^{1/2} \\
				&\qquad\qquad\cdot\cE_\ell\left(\left( 
				\bE_{\ell-1}(A_{\ell,\ell'}^{(4)}+A_{\ell,\ell'}^{(8)})
				-
				E_{M_{\ell, \ell'}}^{\ell-1}(A_{\ell,\ell'}^{(4)}+A_{\ell,\ell'}^{(8)})\right)^2
				\right)^{1/2}.
			\end{split}
		\end{equation}
		The last line follows from the basic inequality $(c_1+c_2)^2\le 2(c_1^2+c_2^2)$ for $c_1,c_2\in\bR$ together with Assumption~\ref{ass:independence} on the independence of $E_{M_{\ell, \ell'}}^\ell(A^{(3)}_{\ell})$ and $E_{M_{\ell, \ell'}}^{\ell-1}(A^{(4)}_{\ell, \ell'}+A^{(8)}_{\ell, \ell'}))$ with respect to the measure $\cP_\ell$.
		Theorems~\ref{thm:expectation-error} and~\ref{thm:qoi-error} yield for $\ell'\ge1$ with $|\cI_\ell|\le 1$ that 
		\begin{equation}\label{eq:A3-est2}
			|\bE_{\ell-1}(A_{\ell,\ell'}^{(4)}+A_{\ell,\ell'}^{(8)})|
			\le 
			\|A_{\ell,\ell'}^{(4)}+A_{\ell,\ell'}^{(8)}\|_{L^2(\gO)}
			\le Ch_{\ell'}^{\eta_\Psi r}.
		\end{equation}
		As for the bound~\eqref{eq:ml_estimate}, we obtain by \eqref{eq:exp-taylor}, Hölder's inequality and Theorem~\ref{thm:forward-error} that
		\begin{equation}
			\begin{split}\label{eq:A3-est3}
				|\bE_\ell(A_{\ell}^{(3)})|
				\le
				\|A_{\ell}^{(3)}\|_{L^2(\gO)}
				&\le 
				\bE_\ell\left(
				|(\exp(\Phi_\ell(\cdot, \gd) -  \Phi_{\ell-1}(\cdot, \gd))-1)\cI_\ell|^2
				\right)^{1/2} \\
				&\le 
				C
				\left(\|u -  u_{N_{\ell-1},h_{\ell-1}}\|_{L^4(\gO, \bP; H^{\gt_\cO})}
				+
				\|u -  u_{N_\ell,h_\ell}\|_{L^4(\gO, \bP; H^{\gt_\cO})}
				\right)\\
				&\qquad\cdot(1+\|u\|_{L^4(\gO, \bP; H^{\gt_\cO})}) \\
				&\le C h_\ell^{\eta_\cO r}.
			\end{split}
		\end{equation}
		Substituting \eqref{eq:A3-est2} and~\eqref{eq:A3-est3} in \eqref{eq:A3-est1} thus shows
		\begin{align*}
			&\cE_\ell\left(\left(\bE_\ell(A_{\ell}^{(3)})
			\bE_{\ell-1}(A_{\ell,\ell'}^{(4)}+A_{\ell,\ell'}^{(8)})
			- E_{M_{\ell, \ell'}}^\ell(A_{\ell}^{(3)}) 
			E_{M_{\ell, \ell'}}^{\ell-1}(A_{\ell,\ell'}^{(4)}+A_{\ell,\ell'}^{(8)})\right)^2
			\right)^{1/2} \\
			&\le 
			Ch_{\ell'}^{\eta_\Psi r} \cE_\ell\left( \left(\bE_\ell(A_{\ell}^{(3)}) - 
			E_{M_{\ell, \ell'}}^\ell(A_{\ell}^{(3)})\right)^2\right)^{1/2}
			\\
			&\quad+Ch_{\ell}^{\eta_\cO r}\cE_\ell\left(\left(
			\bE_{\ell-1}(A_{\ell,\ell'}^{(4)}+A_{\ell,\ell'}^{(8)})
			-
			E_{M_{\ell, \ell'}}^{\ell-1}(A_{\ell,\ell'}^{(4)}+A_{\ell,\ell'}^{(8)})
			\right)^2\right)^{1/2}\\
			&\quad+\sqrt{2}
			\cE_\ell\left( 
			\left(
			E_{M_{\ell, \ell'}}^\ell(A_{\ell}^{(3)})-\bE_\ell(A_{\ell}^{(3)})
			\right)^2 \right)^{1/2}\\
			&\qquad\qquad\cdot
			\cE_\ell\left(\left( 
			\bE_{\ell-1}(A_{\ell,\ell'}^{(4)}+A_{\ell,\ell'}^{(8)})
			-
			E_{M_{\ell, \ell'}}^{\ell-1}(A_{\ell,\ell'}^{(4)}+A_{\ell,\ell'}^{(8)})\right)^2
			\right)^{1/2}.
		\end{align*}
		We then use once again \eqref{eq:A3-est2},~\eqref{eq:A3-est3} and the same arguments as for the bound on $A_{\ell,\ell'}^{(1)}$ to see that 
		\begin{align*}
			&\cE_\ell\left(
			\left(\bE_\ell(A_{\ell}^{(3)})
			\bE_{\ell-1}(A_{\ell,\ell'}^{(4)}+A_{\ell,\ell'}^{(8)})
			- E_{M_{\ell, \ell'}}^\ell(A_{\ell}^{(3)}) 
			E_{M_{\ell, \ell'}}^{\ell-1}(A_{\ell,\ell'}^{(4)}+A_{\ell,\ell'}^{(8)})\right)^2
			\right)^{1/2} \\
			&\le C\left(
			h_{\ell'}^{\eta_\Psi r} \frac{\|A_{\ell}^{(3)}\|_{L^2(\gO)}}{M_{\ell,\ell'}^{1/2}}
			+
			h_{\ell}^{\eta_\cO r} \frac{\|A_{\ell,\ell'}^{(4)}+A_{\ell,\ell'}^{(8)}\|_{L^2(\gO)}}{M_{\ell,\ell'}^{1/2}}
			+ \frac{\|A_{\ell}^{(3)}\|_{L^2(\gO)}\|A_{\ell,\ell'}^{(4)}+A_{\ell,\ell'}^{(8)}\|_{L^2(\gO)}}{M_{\ell,\ell'}}\right)
			\\
			&\le C\left(
			\frac{h_{\ell}^{\eta_\cO r}h_{\ell'}^{\eta_\Psi r}}{M_{\ell,\ell'}^{1/2}}
			+ \frac{h_{\ell}^{\eta_\cO r}h_{\ell'}^{\eta_\Psi r}}{M_{\ell,\ell'}}\right)\\
			&\le C
			M_{\ell,\ell'}^{-1/2} h_{\ell}^{\eta_\cO r}h_{\ell'}^{\eta_\Psi r},
		\end{align*}
		holds for $\ell'\ge1$, where the last line follows since $M_{\ell,\ell'}\ge 1$. 
		Analogously, we deduce that
		\begin{align*}
			\cE_\ell\left(
			\left(\bE_{\ell-1}(A_{\ell}^{(5)})
			\bE_\ell(A_{\ell,\ell'}^{(6)}+A_{\ell,\ell'}^{(7)})
			- E_{M_{\ell, \ell'}}^{\ell-1}(A_{\ell}^{(5)}) 
			E_{M_{\ell, \ell'}}^\ell(A_{\ell,\ell'}^{(6)}+A_{\ell,\ell'}^{(7)})\right)^2
			\right)^{1/2}  
			\le C
			M_{\ell,\ell'}^{-1/2} h_{\ell}^{\eta_\cO r}h_{\ell'}^{\eta_\Psi r}.
		\end{align*}
		
		For the case that  $\ell'=0$ we lose the factor $h_{\ell'}^{\eta_\Psi r}$ in all estimates for $III$, since $\varphi_{\ell'-1}=0$ by definition. Repeating the previous arguments then yields for $\ell=1,\dots, L$ 
		\begin{align*}
			\cE_\ell\left(\left(\bE_\ell(A_{\ell,0}^{(1)})
			- E_{M_{\ell, 0}}^\ell(A_{\ell,0}^{(1)})\right)^2\right)^{1/2} 
			&\le C M_{\ell, 0}^{-1/2} h_\ell^{\eta_\cO r},
			\\
			\cE_\ell\left(\left(\bE_\ell(A_{\ell,0}^{(2)})
			- E_{M_{\ell, 0}}^\ell(A_{\ell,0}^{(2)})\right)^2\right)^{1/2} 
			&\le C M_{\ell, 0}^{-1/2} h_\ell^{\eta_\cO r},
			\\
			\cE_\ell\left(\bE_\ell(A_{\ell}^{(3)})
			\bE_{\ell-1}(A_{\ell,0}^{(4)}+A_{\ell,0}^{(8)})
			- E_{M_{\ell, 0}}^\ell(A_{\ell}^{(3)}) 
			E_{M_{\ell, 0}}^{\ell-1}(A_{\ell,0}^{(4)}+A_{\ell,0}^{(8)})\right)
			&\le C M_{\ell,0}^{-1/2} h_{\ell}^{\eta_\cO r}, 
			\\
			\cE_\ell\left(\bE_{\ell-1}(A_{\ell}^{(5)})
			\bE_\ell(A_{\ell,0}^{(4)}+A_{\ell,0}^{(8)})
			- E_{M_{\ell, 0}}^{\ell-1}(A_{\ell}^{(5)}) 
			E_{M_{\ell, 0}}^\ell(A_{\ell,0}^{(4)}+A_{\ell,0}^{(8)})\right)
			&\le C M_{\ell,0}^{-1/2} h_{\ell}^{\eta_\cO r}.
		\end{align*}

		Finally, we also have 
		\begin{align*}
			\cE_\ell\left(\left(
			\bE_{0}(\varphi_{\ell'}-\varphi_{\ell'-1})
			- E_{M_{0, \ell'}}^0(\varphi_{\ell'}-\varphi_{\ell'-1})
			\right)^2\right)^{1/2} 
			\le C M_{0,\ell'}^{-1/2} h_{\ell'}^{\eta_\Psi r},
		\end{align*}
		as well as 
		\begin{align*}
			\cE_\ell\left(\left(
			\bE_{0}(\varphi_{0})
			- E_{M_{0, 0}}^0(\varphi_0)\right)^2\right)^{1/2} 
			\le 
			C M_{0, 0}^{-1/2}.
		\end{align*}
		We now collect all estimates and use~\eqref{eq:ml-A} and~\eqref{eq:ml-weights} to bound $III$ by
		\begin{align*}
			III
			&
			\le C \left(
			M_{0, 0}^{-1/2} + \sum_{\ell'=1}^{L'(0)} M_{0,\ell'}^{-1/2} h_{\ell'}^{\eta_\Psi r}
			+ \sum_{\ell=1}^{L} M_{\ell,0}^{-1/2} h_{\ell}^{\eta_\cO r}
			+ \sum_{\ell=0}^L \sum_{\ell'=1}^{L'(\ell)} M_{\ell,\ell'}^{-1/2} h_{\ell}^{\eta_\cO r}h_{\ell'}^{\eta_\Psi r}
			\right) \\
			& 
			\le C h_L^{\eta_\cO r} 
			\left(w_{0,0}^{-1/2} +
			\sum_{\ell'=1}^{L'(0)} w_{0, \ell'}^{-1/2}
			+\sum_{\ell=1}^{L} w_{\ell, 0}^{-1/2}
			+\sum_{\ell=1}^L\sum_{\ell'=1}^{L'(\ell)} w_{\ell, \ell'}^{-1/2}\right)  \\
			&\le C \eps.
		\end{align*}
		
	\end{proof}
	
	We need another assumption on the sampling cost to derive complexity estimates for the ML-MCMC estimator. 
	\begin{assumption}\label{ass:cost}
		One sample of $\Phi_\ell(\cdot;\gd)=-\log(\rho(\gd-u_{N_\ell,h_\ell}))$ and
		$\varphi_\ell=\Psi(u_{N_\ell,h_\ell})$ with $u_{N_\ell,h_\ell}\in V_{h_\ell}$ 
		and $n_\ell:=\dim(V_{h_\ell})=\cO(h_\ell^{-d})$ is realized in $\cO(n_\ell)$ work and memory.
	\end{assumption}

	\begin{thm}
		\label{thm:ml-complexity}
		Let Assumptions~\ref{ass:functional},~\ref{ass:independence} and~\ref{ass:cost} hold. For any given $\eps>0$, there are ML-MCMC parameters  $L, L', h_\ell, N_\ell, M_{\ell, \ell'}$ such that the ML-MCMC estimator satisfies
		\begin{equation}\label{eq:rmse}
			\cE_L^{\rm ML}\left(\left(\bE_\gd(\varphi)-E_L(\varphi)\right)^2\right)^{1/2}
			\le C\eps,
		\end{equation}
		with computational cost $\cC_{MLMC}$ for $\eps\to 0$ of order
		\begin{equation}
			\label{eq:mlmc-cost}
			\cC_{MLMC} =
			\begin{cases}
				\cO(\eps^{-2})
				\quad&\text{if $2r\min(\eta_\cO, \eta_\Psi)>d$}, \\
				\cO(\eps^{-2}|\log_2(\eps)|^3) 
				\quad&
				\text{if $2r\max(\eta_\cO, \eta_\Psi)> d$ and $2r\min(\eta_\cO, \eta_\Psi)=d$}, \\
				\cO(\eps^{-2}|\log_2(\eps)|^5) 
				\quad&\text{if $2r\eta_\cO=2r\eta_\Psi=d$}, \\
				\cO(\eps^{-d/(r\min(\eta_\cO, \eta_\Psi))}) 
				\quad&
				\text{if $2r\max(\eta_\cO, \eta_\Psi)=d$ and $2r\min(\eta_\cO, \eta_\Psi)<d$,} \\
				\cO(\eps^{-d/(r\min(\eta_\cO, \eta_\Psi))-\epsilon}) \quad&
				\text{if $2r\max(\eta_\cO, \eta_\Psi)<d$}.
			\end{cases}
		\end{equation}
		The last complexity estimate for $2r\max(\eta_\cO, \eta_\Psi)<d$ holds for any $\epsilon>0$. 
	\end{thm}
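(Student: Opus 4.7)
The plan is to combine the RMSE bound from Theorem~\ref{thm:ml-mcmc} with a direct accounting of the computational work of the ML-MCMC estimator $E_L(\varphi)$ in~\eqref{eq:ml-mcmc}, optimized over the free weights $w_{\ell,\ell'}$ subject to the summability constraint~\eqref{eq:ml-weights}. With the choices of $L$, $L'$, $h_\ell$, $N_\ell$ and $M_{\ell,\ell'}$ dictated by Theorem~\ref{thm:ml-mcmc}, the mean-square bound~\eqref{eq:rmse} is already in hand; the remaining task is to bound the work $\cC_{MLMC}$ from above and to exhibit weights that realize~\eqref{eq:mlmc-cost} in each regime.

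First, by Assumption~\ref{ass:cost} (and since $N_\ell, N_{\ell'}$ are proportional to $\ell, \ell'$), one sample of the chain indexed by $(\ell,\ell')$ in~\eqref{eq:ml-mcmc} costs $\cO(n_\ell+n_{\ell'})=\cO(h_\ell^{-d}+h_{\ell'}^{-d})$ work, since one forward FEM solve at mesh width $h_\ell$ (for $\Phi_\ell$) and another at $h_{\ell'}$ (for $\varphi_{\ell'}$) are required. Summing over all chains and factoring out $h_L^{-2r\eta_\cO}\sim\eps^{-2}$, the total cost takes the form
\begin{equation*}
	\cC_{MLMC}\;\sim\;\eps^{-2}\sum_{(\ell,\ell')} w_{\ell,\ell'}\,c_{\ell,\ell'},\qquad
	c_{\ell,\ell'}\;:=\;\bigl(h_\ell^{-d}+h_{\ell'}^{-d}\bigr)\,h_\ell^{2r\eta_\cO}\,h_{\ell'}^{2r\eta_\Psi},
\end{equation*}
with the conventions $h_\ell^{2r\eta_\cO}=1$ for $\ell=0$ and $h_{\ell'}^{2r\eta_\Psi}=1$ for $\ell'=0$.

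Second, a Lagrange-multiplier optimization of $\sum w_i c_i$ under the constraint $\sum w_i^{-1/2}\le C_w$ yields the near-optimal choice $w_{\ell,\ell'}\propto c_{\ell,\ell'}^{-2/3}$, possibly modified by a slowly growing (poly-logarithmic) prefactor to make~\eqref{eq:ml-weights} genuinely finite as $L,L'\to\infty$. With this choice $\cC_{MLMC}\le C\eps^{-2}\bigl(\sum_{\ell,\ell'} c_{\ell,\ell'}^{1/3}\bigr)^3$, and the root sum splits, via subadditivity of $(\cdot)^{1/3}$, into products of geometric series in $\ell$ and $\ell'$ with common ratios $2^{(d-2r\eta_\cO)/3}$ and $2^{(d-2r\eta_\Psi)/3}$. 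A case distinction on the signs of $d-2r\eta_\cO$ and $d-2r\eta_\Psi$ then produces the five regimes of~\eqref{eq:mlmc-cost}: when both are negative, all series converge uniformly in $L$, yielding $\cO(\eps^{-2})$; when one or both vanish, the corresponding sum is replaced by $\cO(L)=\cO(|\log_2\eps|)$ and, combined with the logarithmic weight compensation necessary to preserve~\eqref{eq:ml-weights}, produces the $|\log_2\eps|^3$ and $|\log_2\eps|^5$ overheads; and when at least one exponent is strictly positive, the dominant geometric sum is of order $2^{L(d-2r\min(\eta_\cO,\eta_\Psi))/3}$ which, through $h_L\sim\eps^{1/(r\eta_\cO)}$, translates into the prefactor $\eps^{2-d/(r\min(\eta_\cO,\eta_\Psi))}$.

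Finally, in the regime $2r\max(\eta_\cO,\eta_\Psi)<d$ the rounding $\lceil\cdot\rceil$ in the definition of $M_{\ell,\ell'}$ cannot be neglected: on fine levels the un-ceiled sample count drops below one and is replaced by a single sample, contributing an extra $\cO(L\cdot L'\cdot(n_L+n_{L'}))=\cO(|\log_2\eps|^2\eps^{-d/(r\eta_\cO)})$ of work that is absorbed into $\eps^{-\epsilon}$ for arbitrary $\epsilon>0$. The main obstacle is not any single estimate but the bookkeeping: the four index families (the singleton $(0,0)$, the two boundary rays $\ell=0$ and $\ell'=0$, and the interior double sum) must be handled separately, and one has to verify in each of the five regimes that the selected weights $w_{\ell,\ell'}$ simultaneously realize the claimed cost and satisfy~\eqref{eq:ml-weights} uniformly in $L$.
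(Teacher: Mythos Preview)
Your approach is correct in outline but takes a genuinely different route from the paper. The paper does \emph{not} use the Lagrangian optimization; instead it fixes \emph{separable} weights $w_{\ell,\ell'}=w_\ell w_{\ell'}$, choosing $w_\ell$ (and likewise $w_{\ell'}$) explicitly according to the sign of $d-2r\eta_\cO$: polynomial $(\ell+1)^{\alpha_1}$ in the subcritical case, $1+L^2\mathbb{1}_{\{\ell>0\}}$ at the critical boundary, and a geometric factor $2^{(d-2r\eta_\cO)(L-\ell)\alpha_3}$ in the supercritical case. With these explicit weights the paper bounds the four sums in~\eqref{eq:ml-cost} one by one, and the $\epsilon$-loss in the last regime arises from the need to keep $\alpha_3\in(0,1)$ strictly positive (the limiting choice $\alpha_3=0$ would violate~\eqref{eq:ml-weights} by a factor $|\log\eps|^2$).

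Your Giles-type route---$w_{\ell,\ell'}\propto c_{\ell,\ell'}^{-2/3}$ leading to $\cC_{MLMC}\lesssim\eps^{-2}\bigl(\sum c_{\ell,\ell'}^{1/3}\bigr)^3$---is cleaner and in fact sharper: in the doubly critical case $2r\eta_\cO=2r\eta_\Psi=d$ your method yields $|\log_2\eps|^3$, not $|\log_2\eps|^5$ as the paper obtains (the stated bound still follows since $|\log\eps|^3=\cO(|\log\eps|^5)$). Two small inaccuracies in your write-up: (i) the ``slowly growing poly-logarithmic prefactor'' needed to keep~\eqref{eq:ml-weights} bounded is not poly-logarithmic in the supercritical regimes---the correct rescaling is by the full factor $\bigl(\sum_j c_j^{1/3}\bigr)^2$, which is a power of $\eps$ there; (ii) the ceiling overhead you invoke in the last case is $\cO(|\log\eps|\,\eps^{-d/(r\min(\eta_\cO,\eta_\Psi))})$, which is absorbed into $\eps^{-\epsilon}$ as you say, but the same overhead appears in case~4 as well and would slightly weaken that bound by a log factor---the paper's proof has the same issue and simply drops the ceiling in the cost estimate. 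Neither point invalidates your argument for proving the theorem as stated.
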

	
	\begin{rem}
		The first three estimates of $C_{MLMC}$ require that $d\in\{1,2\}$, since $r\in (0,1]$, $\eta_\cO\in[1,\frac{3}{2})$ and $\eta_\Psi\in[1,2]$.
		Further, for the frequently used parameter set $r=\eta_\cO=\eta_\Psi=1$ we recover (essentially) an asymptotic complexity of order $\cO(\eps^{-d})$ if $d\ge 2$, which corresponds to the cost of a single sample with spatial resolution $\eps$.
	\end{rem}
	
	\begin{proof}[Proof of Theorem~\ref{thm:ml-complexity}.]
		For given $\eps>0$, we set the ML-MCMC parameters  $L, L', h_\ell, N_\ell, M_{\ell, \ell'}$ as in Theorem~\ref{thm:ml-mcmc}. 
		The weights $w_{\ell, \ell'}$ in $M_{\ell, \ell'}$ are given by $w_{\ell, \ell'}=w_\ell w_{\ell'}$, where we choose
		\begin{align*}
			&w_{\ell} := 
			\begin{cases}
				(\ell+1)^{\ga_1} \quad&\text{if $2\eta_\cO r>d$,} \\
				1+L^2\indi_{\{\ell>0\}} \quad&\text{if $2\eta_\cO r=d$ and $2\eta_\Psi r\ge d$,} \\
				2^{\ga_2\ell} \quad&\text{if $2\eta_\cO r=d$ and $2\eta_\Psi r<d$,} \\
				2^{(d-2\eta_\cO r)(L-\ell)\ga_3} \quad&\text{if $2\eta_\cO r<d$,} \\
			\end{cases}
			\quad \text{and}\quad \\
			&w_{\ell'} := 
			\begin{cases}
				(\ell'+1)^{\ga_1} \quad&\text{if $2\eta_\Psi r>d$,} \\
				1+(L')^2\indi_{\{\ell>0\}} \quad&\text{if $2\eta_\Psi r=d$ and $2\eta_\cO r\ge d$,} \\
				2^{\ga_2\ell'} \quad&\text{if $2\eta_\Psi r=d$ and $2\eta_\cO r<d$,} \\
				2^{(d-2\eta_\Psi r)(L-\ell')\ga_3} \quad&\text{if $2\eta_\Psi r<d$,} \\
			\end{cases}
		\end{align*}
		for parameters $\ga_1>2$, $\ga_2\in(0,d)$ and $\ga_3\in(0,1)$ to be further specified below. 
		In each scenario the choice of $w_{\ell, \ell'}$ satisfies~\eqref{eq:ml-weights} with a constant $C_w>0$ uniformly in $L$, thus we have that 
		\begin{equation*}
			\cE_L^{\rm ML}\left(\left(\bE_\gd(\varphi)-E_L(\varphi)\right)^2\right)^{1/2}
			\le C\eps,
		\end{equation*}
		by Theorem~\ref{thm:ml-mcmc}, and it remains to bound the computational complexity.

		Under Assumption~\ref{ass:cost} and since $\eps\simeq (h_02^{-L})^{\eta_\cO r}$
		the cost to sample $E_L(\varphi)$ is bounded by
		\be\label{eq:ml-cost}
		\begin{split}
			\cC_{MLMC}
			&\le C\left(M_{0,0}h_0^{-d}+\sum_{\ell=1}^L M_{\ell, 0} h_\ell^{-d} + 
			\sum_{\ell'=1}^{L'(0)} M_{0, \ell'} h_{\ell'}^{-d}
			+\sum_{\ell=1}^L \sum_{\ell'=1}^{L'(\ell)} M_{\ell, \ell'} 
			(h_{\ell}^{-d}+h_{\ell'}^{-d})
			\right)\\
			&\le C\eps^{-2}\left(h_0^{-d}w_{0,0}
			+h_0^{2\eta_\cO r-d}\sum_{\ell=1}^L 2^{(d- 2\eta_\cO r)\ell}w_{\ell} 
			+h_0^{2\eta_\Psi r-d}\sum_{\ell'=1}^{L'(0)} 2^{(d- 2\eta_\Psi r)\ell'}w_{\ell'} \right) \\
			&\quad+
			C\eps^{-2}h_0^{2(\eta_\cO +\eta_\Psi )r-d}
			\sum_{\ell=1}^L2^{(d- 2\eta_\cO r)\ell} w_\ell 
			\sum_{\ell'=1}^{L'(\ell)} 2^{- 2\eta_\Psi r\ell'}w_{\ell'} \\
			&\quad+ 
			C\eps^{-2}h_0^{2(\eta_\cO +\eta_\Psi )r-d}
			\sum_{\ell=1}^L2^{- 2\eta_\cO r\ell} w_\ell 
			\sum_{\ell'=1}^{L'(\ell)} 2^{(d - 2\eta_\Psi r)\ell'}w_{\ell'}\\
			&\le C\eps^{-2}\Bigg(
			1+\sum_{\ell=1}^L 2^{(d- 2\eta_\cO r)\ell}w_{\ell}
			\left(1+\sum_{\ell'=1}^{L} 2^{- 2\eta_\Psi r\ell'}w_{\ell'}\right)\\
			&\qquad\quad\qquad+
			\sum_{\ell'=1}^{L'} 2^{(d- 2\eta_\Psi r)\ell'}w_{\ell'}
			\left(1+\sum_{\ell=1}^{L} 2^{- 2\eta_\cO r\ell}w_{\ell}\right)
			\Bigg).
		\end{split}
		\ee
		Here, the last line follows since $L'(\ell)=L'$ is independent of $\ell$.
		The first sum with respect to $\ell$ is then bounded by   
		\begin{equation*}
			\sum_{\ell=1}^L 2^{(d- 2\eta_\cO r)\ell}w_{\ell}
			= \begin{cases}
				\sum_{\ell=1}^L 2^{(d- 2\eta_\cO r)\ell}(\ell+1)^{\ga_1} 
				\quad\;\, \le C_1 
				\quad&\text{if $2\eta_\cO r>d$,} \\
				\sum_{\ell=1}^L 1+L^2
				\qquad\qquad\qquad\;\;\;
				\le C_2 L^3
				\quad&\text{if $2\eta_\cO r=d$ and $2\eta_\Psi r\ge d$,} \\
				\sum_{\ell=1}^L 2^{\ga_2\ell} 
				\qquad\qquad\qquad\quad\;\;\;
				\le C_3 2^{\ga_2L} 
				\quad&\text{if $2\eta_\cO r=d$ and $2\eta_\Psi r<d,$} \\
				\sum_{\ell=1}^L 2^{(d- 2\eta_\cO r)(\ga_3 L + (1-\ga_3)\ell)} \le C_4 2^{(d- 2\eta_\cO r)L} \quad&\text{if $2\eta_\cO r<d$.} 
			\end{cases}
		\end{equation*}
		The constants $C_1, C_2, C_3, C_4\in(0,\infty)$ are independent of $L$,
		and therefore of $\eps$. 
		Since $L\le C|\log_2(\eps)|$ and $\eps\le C 2^{-L\eta_\cO r}$, there is a $C>0$, independent of $\eps$, such that
		\begin{equation}\label{eq:ml-sum1}
			\sum_{\ell=1}^L 2^{(d- 2\eta_\cO r)\ell}w_{\ell}
			\le  \begin{cases}
				C\quad&\text{if $2\eta_\cO r>d$,} \\
				C |\log_2(\eps)|^3
				\quad&\text{if $2\eta_\cO r=d$ and $2\eta_\Psi r\ge d$,} \\
				C \eps^{-\ga_2/(\eta_\cO r)}
				\quad&\text{if $2\eta_\cO r=d$ and $2\eta_\Psi r<d$,} \\
				C \eps^{2-d/(\eta_\cO r)} \quad&\text{if $2\eta_\cO r<d$.} 
			\end{cases}
		\end{equation}
		Similarly, we conclude by $\eps\simeq  2^{-L\eta_\cO r}\simeq 2^{-L'\eta_\Psi r}$ that there is $C>0$, independent of $\eps$, such that
		\begin{equation}\label{eq:ml-sum2}
			\sum_{\ell'=1}^{L'(\ell)} 2^{(d- 2\eta_\Psi r)\ell'}w_{\ell'}
			\le  \begin{cases}
				C\quad&\text{if $2\eta_\Psi r>d$,} \\
				C |\log_2(\eps)|^3
				\quad&\text{if $2\eta_\Psi r=d$ and $2\eta_\cO r\ge d$,} \\
				C \eps^{-\ga_2/(\eta_\Psi r)}
				\quad&\text{if $2\eta_\Psi r=d$ and $2\eta_\cO r<d$,} \\
				C \eps^{2-d/(\eta_\Psi r)} \quad&\text{if $2\eta_\Psi r<d$.} \\
			\end{cases}
		\end{equation}
		Whenever $2\eta_\cO r=d$ and $2\eta_\Psi r\ge d$, there holds by $L\le C|\log_2(\eps)|$ that
		\begin{equation*}
			\sum_{\ell=1}^L 2^{- 2\eta_\cO r\ell}w_{\ell}
			\le 
			L^2\sum_{\ell=1}^L2^{- d\ell}
			<C|\log_2(\eps)|^2.
		\end{equation*}
		
		Next, in case that $2\eta_\cO r=d$ and $2\eta_\Psi r<d$ we have by $\ga_2<d=2\eta_\cO r$ that
		\begin{equation*}
			\sum_{\ell=1}^L 2^{- 2\eta_\cO r\ell}w_{\ell}
			=
			\sum_{\ell=1}^L2^{(- d+\ga_2)\ell}
			<\infty.
		\end{equation*}
		Moreover, in case that $2\eta_\cO r<d$ holds, we have $w_\ell\le 2^{(d- 2\eta_\cO r)\ga_3L}$ and hence
		\begin{equation*}
			\sum_{\ell=1}^L 2^{- 2\eta_\cO r\ell}w_{\ell}
			\le 
			2^{(d- 2\eta_\cO r)\ga_3L} \sum_{\ell=1}^L2^{-2\eta_\cO r\ell}
			\le C \eps^{\ga_3(2-d/(\eta_\cO r))}.
		\end{equation*}
		Altogether, this shows that there is $C>0$ such that for all $L\ge 1$
		\begin{equation}\label{eq:ml-sum3}
			\sum_{\ell=1}^L 2^{- 2\eta_\cO r\ell}w_{\ell}
			\le  \begin{cases}
				C |\log_2(\eps)|^2
				\quad&\text{if $2\eta_\cO r=d$ and $2\eta_\Psi r\ge d$,} \\
				C  \eps^{\ga_3(2-d/(\eta_\cO r))}
				\quad&\text{if $2\eta_\cO r<d$,} \\
				C
				\quad&\text{otherwise.}
			\end{cases}
		\end{equation}
		As $\ga_2<d$ also holds for $2\eta_\cO r<d$ and $2\eta_\Psi r=d$, one may conclude analogously that
		\begin{equation}\label{eq:ml-sum4}
			\sum_{\ell'=1}^{L'(\ell)} 2^{- 2\eta_\Psi r\ell'}w_{\ell'}
			\le  \begin{cases}
				C |\log_2(\eps)|^2
				\quad&\text{if $2\eta_\Psi r=d$ and $2\eta_\cO r\ge d$,} \\
				C \eps^{\ga_3(2-d/(\eta_\Psi r))}
				\quad&\text{if $2\eta_\Psi r<d$,} \\
				C
				\quad&\text{otherwise.}
			\end{cases}
		\end{equation}
		The first three bounds for $C_{MLMC}$ in~\eqref{eq:mlmc-cost} then follow right away by combining the estimates~\eqref{eq:ml-sum1}--~\eqref{eq:ml-sum4} with \eqref{eq:ml-cost}.
		
		Now we consider the case $2\eta_\cO r=d$ and $d>2\eta_\Psi r$, where we choose
		$$
		\ga_2\in 
		\left(0, \frac{d}{2}\left(\frac{d}{2\eta_\Psi r}-1\right)\right]
		\cap \left(0,d\right)
		$$ 
		and $\ga_3=\frac{1}{2}$ to obtain with $\frac{d}{\eta_\Psi r}>2$ and $d=2{\eta_\cO r}$ that
		\begin{align*}
			C_{MLMC}
			&\le 
			C \eps^{-2}\left(
			1+ \eps^{-\ga_2/(\eta_\cO r)}(1+\eps^{\ga_3(2-d/(\eta_\Psi r))})
			+
			\eps^{2-d/(\eta_\Psi r)}(1+C) 
			\right) \\
			&\le 
			C\left(\eps^{-2-\ga_2/(\eta_\cO r)+\ga_3(2-d/(\eta_\Psi r))}+\eps^{-d/(\eta_\Psi r)}
			\right) \\
			&\le 
			C\left(\eps^{-\ga_22/d-1-d/(2\eta_\Psi r)}+\eps^{-d/(\eta_\Psi r)}
			\right)\\
			&\le 
			C \eps^{-d/(\eta_\Psi r)}.
		\end{align*}
		Analogously, if $2\eta_\cO r<d$ and $2\eta_\Psi r=d$, we let 
		$$
		\ga_2\in 
		\left(0, \frac{d}{2}\left(\frac{d}{2\eta_\cO r}-1\right)\right]
		\cap (0,d), 
		$$ 
		and $\ga_3=\frac{1}{2}$ to obtain that $C_{MLMC}\le C \eps^{-d/(\eta_\cO r)}$. 
		
		In the final case where $\max(2\eta_\cO r, 2\eta_\cO r)<d$ holds, we have that
		\begin{align*}
			C_{MLMC}
			&\le 
			C \eps^{-2}\left(
			1+ \eps^{2-d/(\eta_\cO r)}(1+\eps^{\ga_3(2-d/(\eta_\Psi r))})
			+
			\eps^{2-d/(\eta_\Psi r)}(1+\eps^{\ga_3(2-d/(\eta_\cO r))})
			\right) \\
			&\le 
			C\left(\eps^{\ga_3(2-d/(r\min(\eta_\cO,\eta_\Psi)}
			\right) \eps^{-d/(r\min(\eta_\cO,\eta_\Psi))}\\
			&\le 
			C \eps^{-d/(\min(\eta_\cO,\eta_\Psi)r)-\epsilon},
		\end{align*}
		where  $\epsilon:=-\ga_3(2-d/(r\min(\eta_\cO,\eta_\Psi))>0$ can be made arbitrary small by choosing a sufficiently small $\ga_3\in(0,1)$ (however, note that $C=C(\ga_3)$ in $\eqref{eq:rmse}$ is only uniform in $L$ and $\eps$ if $\ga_3>0$, but we obtain that $C(\ga_3)=\cO(L^2)=\cO(|\log_2(\eps)|^2)$ for $\ga_3=0$.)
	\end{proof}
	
	\section{Numerical Experiments}
	\label{sec:numerics}
	
	\subsection{Bayesian inverse problem in 1D}
	\label{sec:1Dnumerics}
	
	Let $\bT^1=[0,1]$ be the one-dimensional torus, let $\cD:=\bT$, and consider the elliptic (forward) problem to find $u(\go):\cD\to \bR$ for given $\go\in\gO$ such that 
	\begin{equation}\label{eq:ellipticpde-1d}
		\begin{alignedat}{2}
			-\nabla\cdot(\exp(b_T(\go))\nabla u(\go)) &= 10\quad &&\text{in $(0,1)$}, \quad 
			u(\go) = 0 \quad &&\text{on $\{0, 1\}$}.
		\end{alignedat}
	\end{equation}
	The log-diffusion coefficient $b_T$ in~\eqref{eq:ellipticpde-1d} is a Besov random tree prior with parameters $s=\frac{8}{5}$, $p=\frac{5}{3}$ and wavelet density $\beta=\frac{4}{5}$.
	For the Bayesian inverse problem, we sample a realization of $u(\go)$ for given $\go$ (also referred to as "ground truth") and consider the parameter-to-observation map
	\begin{equation*}
		\cG:\gO\to \bR^k,\quad \go\mapsto 
		\begin{pmatrix}
			u(\go, x_i),\, i=1,\dots,k
		\end{pmatrix}^\top,
		\quad\text{for $0<x_1<\dots<x_k<1$,}
	\end{equation*}
	where $k=9$ and $x_i:=0.1\cdot i$ for $i=1,\dots,9$. 
	Hence, the observation functional $\cO$ is a linear functional $\cO\in (V')^k$. 
	To generate the synthetic data, we approximate $u$ on a FE grid on $[0,1]$ with $2^{11}$ equidistant nodes, and by truncating the Besov random tree prior $b_T$ after $N=11$ scales to obtain a feasible log-diffusion $b_{T,N}\approx b_T$ (as $s-\frac{d}{p}=1$ the resulting pathwise error is of order $\cO(2^{-11})$ by Theorem~\ref{thm:forward-error}).
	A plot of the ground truth, the corresponding fine approximation of $u$ and the observations is given on the left panel in Figure~\ref{fig:1d-results}.
	\begin{figure}[ht]
		\centering
		\subfigure{\includegraphics[scale=0.45]{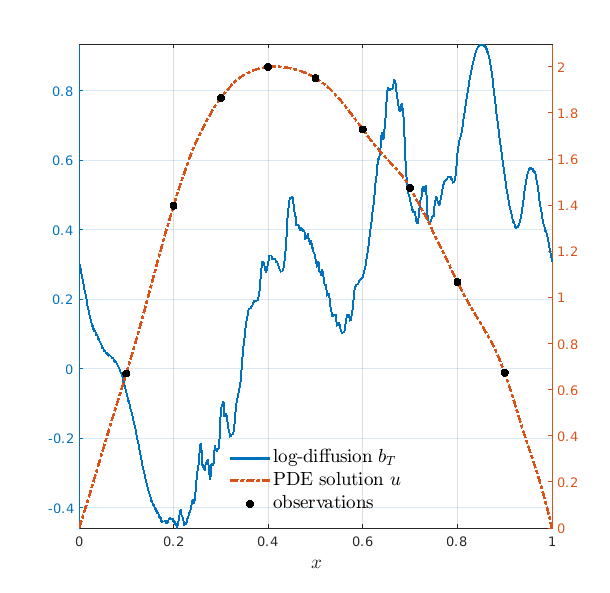}}
		\subfigure{\includegraphics[scale=0.45]{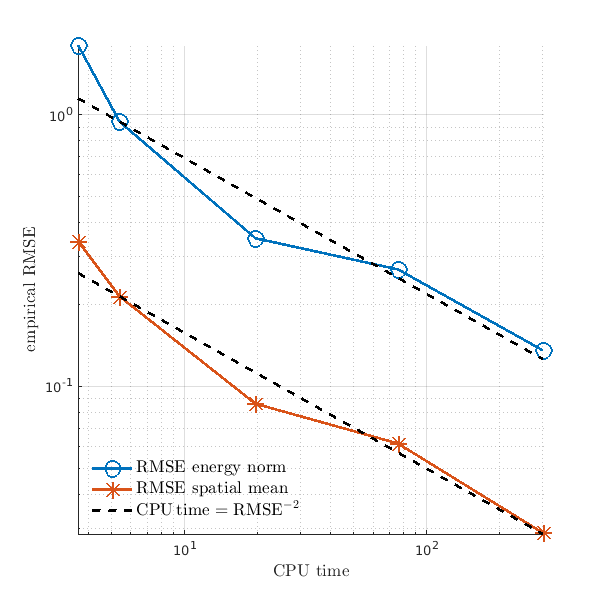}}
		\caption{Left: Plot of the synthetic data, that is, the sampled Besov random tree prior, corresponding PDE solution $u$ and point observations (black dots). 
			Right: Time-to-error plot of ML-MCMC estimator for the energy norm and the spatial mean with maximum refinements $h_L=2^{-3-L}$ for $L=2,\dots, 6$. As predicted, an error of order $\cO(\eps)$ is achieved with computational complexity of order $\cO(\eps^{-2})$.}

		\label{fig:1d-results}
	\end{figure}
	The noisy observations are given by 
	\begin{equation*}
		\gd = \cG(\go) + \vartheta,
	\end{equation*}
	where $\vartheta\sim\cN(0, \sigma^2 I_9)$ with $\sigma=0.1$. 
	
	We aim to approximate the posterior expectations $\bE_\gd(\Psi_i(u))$ for $i=1,2$, where  
	\begin{equation*}
		\Psi_1(u):=\left(\int_\cD \nabla u \cdot \nabla u \,dx\right)^{\frac{1}{2}}
		\quad\text{and}\quad
		\Psi_2(u):=\int_\cD u \,dx
	\end{equation*}
	are the energy norm and spatial mean of $u$, respectively. 
	Assumption~\ref{ass:functional} holds for this QoIs with $\theta_{\Psi_1}=1$, $\theta_{\Psi_2}=0$ and with $\rho_1=1$, $\rho_2=0$ in either case.
	We use the ML-MCMC estimator from Section~\ref{subsec:ml-mcmc} with initial FE mesh width $h_0:=2^{-3}$ and test the cases $L\in\{2,\dots,6\}$. The ML-MCMC parameters are chosen as in Theorem~\ref{thm:ml-mcmc} (for $t=r=s-\frac{d}{p}=1$), where we have used $\eta_\Psi=1$ for both the energy norm and the spatial mean for simplicity.
	Using $\eta_\Psi=2$ for the spatial mean requires a large parameter $\ga$, 
	otherwise we obtain essentially $M_{\ell, \ell'}=1$ for $\ell, \ell'\ge 1$. 
	But if $\ga$ is large, 
	we do not gain a significant reduction in computational time.
	Since $2r\min(\eta_\cO, \eta_\Psi)>d$ for all $\eta_\Psi\in[1,2]$, 
	the asymptotic complexity of order $\cO(\eps^{-2})$ remains unaffected from this simplification.
	We choose the ML-MCMC weights $w_{\ell}=(\ell+1)^\ga, w_{\ell'}=(\ell'+1)^\ga$ with $\ga=3$. According to Theorem~\ref{thm:ml-complexity}, this yields a RMSE of order $\cO(2^{-(L+3)})$ with work $\cO(2^{2L})$ for any $L$.

	We use the single-level MC ratio estimator from \cite[Section 4.1]{scheichl2017quasi} with FE meshwidth $h_{ref}=2^{-11}$, scale truncation $N_{ref}=11$, and $M_{ref}=2^{22}$ samples to obtain a reference solution in our test example. The resulting error of this reference is of order $\cO(2^{-11})$, and therefore negligible when compared to the ML-MCMC estimator with $L\le 6$.  
	We sample $M_{ML}=64$ independent realizations of each ML-MCMC estimator for a given $L$ to calculate the empirical RMSE based on the reference solution. 
	The results are depicted on the right in Figure~\ref{fig:1d-results}. One clearly sees that an empirical error of order $\cO(\eps)$ is achieved in $\cO(\eps^{-2})$ computational time, confirming our theoretical analysis in Section~\ref{subsec:ml-mcmc}.
	
	\subsection{Bayesian inverse problem in 2D}
	\label{sec:2Dnumerics}
	
	Let $\bT^2=[0,1]^2$ be the two-dimensional torus, let $\cD:=\bT^2$, and consider the elliptic (forward) problem to find $u(\go):\cD\to \bR$ for given $\go\in\gO$ such that 
	\begin{equation}\label{eq:ellipticpde-2d}
		\begin{alignedat}{2}
			-\nabla\cdot(\exp(b_T(\go))\nabla u(\go)) &= 10\quad &&\text{in $(0,1)^2$}, \quad 
			u(\go) = 0 \quad &&\text{on $\partial\cD$}.
		\end{alignedat}
	\end{equation}
	The log-diffusion coefficient $b_T$ in~\eqref{eq:ellipticpde-2d} is a Besov random tree prior with parameters $s=\frac{12}{5}$, $p=\frac{5}{3}$ and wavelet density $\beta=\frac{1}{2}$.
	We sample again a realization of $u(\go)$ for a given $\go$ as "ground truth" and now consider the parameter-to-observation map
	\begin{equation*}
		\cG:\gO\to \bR^k,\quad \go\mapsto 
		\begin{pmatrix}
			u(\go, (x_i, y_j)),\, i,j=1,\dots,\sqrt k
		\end{pmatrix}^\top,
	\end{equation*}
	where $k=36$ and with observation points $x_i, y_j \in\{0.1, 0.26, 0.42, 0.58, 0.74, 0.9\}$.
	This yields an observation functional $\cO\notin (V')^k$, but rather $\cO\in ((H^{1+\eps}_0(\cD))')^k$ for any $\eps>0$. However, since $\eps$ may be arbitrary small we treat $\cO$ as if $\eta_\cO=1$ holds in our experiments.
	The synthetic data is sampled by bilinear FEs on an equidistant grid with $2^{10}$ nodes in each coordinate direction, and by truncating the Besov random tree prior $b_T$ after $N=10$ scales. The resulting pathwise error is then of order $\cO(2^{-10})$ by Theorem~\ref{thm:forward-error}.
	A plot of the ground truth, the corresponding fine approximation of $u$ and the observations are given in Figure~\ref{fig:2D-obs}.
	\begin{figure}[ht]
		\centering
		\subfigure{\includegraphics[scale=0.45]{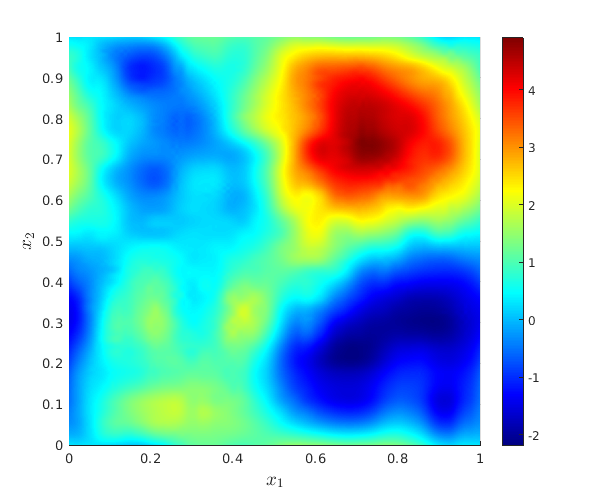}}
		\subfigure{\includegraphics[scale=0.45]{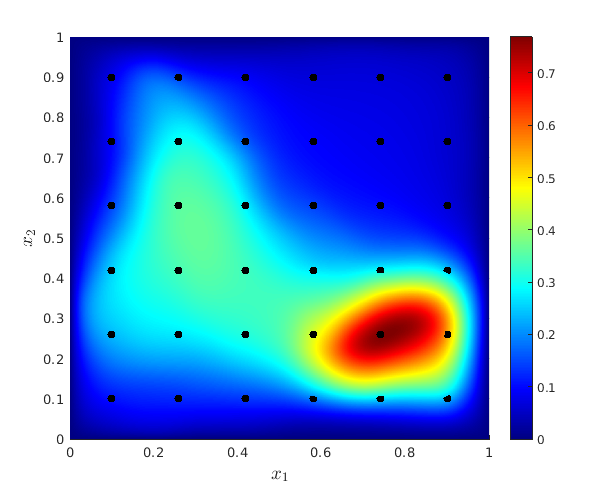}}
		\caption{Left: Plot of the synthetic data, that is, the sampled Besov random tree prior on $\bT^2$ with parameters $s=\frac{12}{5}$, $p=\frac{5}{3}$ and $\beta=\frac{1}{2}$.
			Right:  Corresponding PDE solution $u$ and point observations (black dots).}
		\label{fig:2D-obs}
	\end{figure}
	
	We approximate again the posterior expectations $\bE_\gd(\Psi_i(u))$ for $i=1,2$, where 
	$\Psi_1$ and $\Psi_2$ are the functionals corresponding to the energy norm and spatial mean, respectively.
	We test the ML-MCMC estimator from Section~\ref{subsec:ml-mcmc} with bilinear finite elements, initial mesh width $h_0:=2^{-3}$ and $L\in\{2,\dots,5\}$. The MLMC-parameters are chosen as in Theorem~\ref{thm:ml-complexity} (for $t=r=s-\frac{d}{p}=1$):

	For the energy norm ($\Psi_1$) it holds that $\eta_\Psi=1$, hence $2r\eta_\cO=2r\eta_\Psi=d$ and we set the ML-MCMC weights now as $w_{\ell}=w_{\ell'}=1+3\cdot L^2\indi_{\{\ell>0\}}$. We found that multiplying $w_{\ell}, w_{\ell'}$ by a factor of three for $\ell,\ell'>0$ stabilizes convergence, while this clearly does not affect the asymptotic cost of the estimator.
	For the spatial mean ($\Psi_2$) it holds that $\eta_\Psi=2$, hence $2r\min(\eta_\cO, \eta_\Psi)=d$ and $2r\max(\eta_\cO, \eta_\Psi)>d$, and we exploit the increased smoothness of $\Psi_2$ to reduce computational cost.
	We therefore set the ML-MCMC weights as $w_{\ell}=1+3\cdot L^2\indi_{\{\ell>0\}}$ and $w_{\ell'}=(\ell'+1)^6$.

	For our examples in space dimension $d=2$, 
	all ML-MCMC estimators
	allowed considerable reductions in CPU-time upon allowing 
	a burn-in period of the first Markov chains at each discretization level 
	as follows:
	for a fixed discretization level $\ell$ (of the posterior approximation) and $\ell'=0$ 
	we discarded the first $20 \%$ of samples of the largest Markov chain 
	corresponding to the level $(\ell, 0)$. 
	For $\ell'\ge1$, we then used the last accepted sample 
	of the previous chain on $(\ell, \ell'-1)$ to initialize the new chain 
	with respect to the levels $(\ell, \ell')$, without another burn-in phase.
	We repeat this procedure for all $\ell=0, \dots, L$, 
	where the initial values of the first chains are chosen independently with respect to $\ell$. 
	This modified estimator satisfies in particular Assumption~\ref{ass:independence}.
	To justify our burn-in approach we report the results of the corresponding ML-MCMC estimators without burn-in phase for $\ell'=0$, that initialize the Markov chains for each pair $(\ell, \ell')$ independently.

	We again use the single-level MC ratio estimator from \cite[Section 4.1]{scheichl2017quasi} with FE meshwidth $h_{ref}=2^{-9}$, scale truncation $N_{ref}=9$, and $M_{ref}=2^{18}$ samples to obtain a reference solution for our test example. The resulting error of the reference is now of order $\cO(2^{-9})$, which still seems to be sufficient for our experiments.  
	We sample $M_{ML}=64$ independent realizations of each ML-MCMC estimator for a given $L$ to calculate the empirical RMSE based on the reference solution. 
	The results are depicted in Figure~\ref{fig:2D-results} and Table~\ref{table:RMSE}.
	One clearly sees that an empirical error of order $\cO(\eps)$ is achieved in $\cO(\eps^{-2}|\log(\eps)|^2)$ computational time with the burned-in estimator for both the energy norm and the spatial mean. This is somewhat surprising at first sight, 
	as from our complexity analysis,
	we would expect complexity of order $\cO(\eps^{-2}|\log(\eps)|^5)$ for the energy norm 
	by Theorem~\ref{thm:ml-complexity} and the choice of ML-MCMC weights.
	
	We further see that the initial burn-in phase and the sequential initialization with respect to $\ell'$ significantly reduces the empirical RMSE, while the computational times of both estimators are comparable, see Table~\ref{table:RMSE}. This effect is especially pronounced for the estimator of the spatial mean, which does not seem to converge at all without burn-in.  
	This is explained since very few samples (essentially $M_{\ell, \ell'}=\cO(1)$) are generated for $\ell'\ge1$ in this case. A burn-in phase and initialization of the previous level therefore massively benefits these short chains, while they do not enter the asymptotic realm without burn-in phase on the coarsest level and proper initialization.
	\begin{figure}[ht]
		\centering
		\includegraphics[scale=0.5]{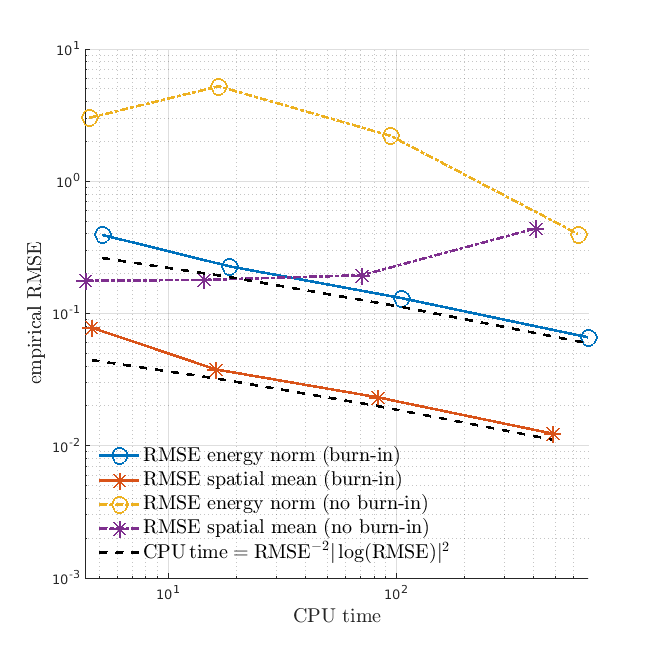}
		\caption{Time-to-error plot of the ML-MCMC estimator for the energy norm (blue circles) and the spatial mean (orange stars). The estimators with burn-in on the coarsest level $\ell'=0$ and sequential initialization achieve an error $\cO(\eps)$ with computational complexity of order $\cO(\eps^{-2}|\log(\eps)|^2)$ and have a significantly lower empirical error than their counterparts without burn-in phase.}
		\label{fig:2D-results}
	\end{figure}

	\begin{table}[ht]
		\centering
		\begin{tabular}{ |c||c|c|c|c| } 
			\hline
			Level $L$ (finest resolution)  & 2 & 3 & 4 & 5 \\ \hline 
			\multirow{2}{8em}{$\frac{\text{RMSE without burn-in}}{\text{RMSE with burn-in}}$} 
			& 7.6692 & 22.9367 & 16.8583 & 6.0254 \\
			& 2.2673 & 4.7504 & 8.3776 & 35.5614  \\ \hline 
			\multirow{2}{10em}{$\frac{\text{CPU time with burn-in}}{\text{CPU time without burn-in}}$} 
			& 1.1372 & 1.1109 & 1.1142 & 1.1045 \\
			& 1.0692 & 1.1341 & 1.0811 & 1.1812  \\ \hline 
		\end{tabular}
		\caption{Ratios of empirical RMSE and CPU time for the ML-MCMC estimators of the energy norm (first row) and the spatial mean (second row). Burn-in and sequential initialization achieve a significant reduction of the RMSE, at an additional cost of less than 20\%.}
		\label{table:RMSE}
	\end{table}
	
	\subsection*{Acknowledgements}
	
	AS was partly funded by the ETH Foundations of Date Science Initiative (ETH-FDS), and it is gratefully acknowledged. The authors would like to thank Prof. Dr. Christoph Schwab for insightful discussions that lead to a significant improvement of the manuscript. 
	
	\bibliographystyle{abbrv}
	\bibliography{references}
\end{document}